\newtheorem{theorem}{Theorem}[section]
\newtheorem{proposition}[theorem]{Proposition}
\newtheorem{corollary}[theorem]{Corollary}
\newtheorem{question}{Question}
\newtheorem{lemma}[theorem]{Lemma}
\theoremstyle{definition}
\newtheorem{definition}[theorem]{Definition}
\newtheorem{example}[theorem]{Example}
\theoremstyle{remark}
\newtheorem{remark}[theorem]{Remark}
\numberwithin{equation}{section}
\newcommand{\al}{\alpha}
\newcommand{\be}{\beta}
\newcommand{\de}{\delta}
\newcommand{\ep}{\epsilon}
\newcommand{\ga}{\gamma}
\newcommand{\la}{\lambda}
\newcommand{\om}{\omega}
\newcommand{\si}{\sigma}
\newcommand{\vp}{\varphi}
\newcommand{\De}{\Delta}
\newcommand{\Ga}{\Gamma}
\newcommand{\La}{\Lambda}
\newcommand{\Si}{\Sigma}
\newcommand{\Om}{\Omega}
\newcommand{\tS}{\tilde{S}}
\newcommand{\tL}{\tilde{L}}
\newcommand{\tv}{\tilde{v}}
\newcommand{\hPhi}{\widehat{\Phi}}
\newcommand{\hPsi}{\widehat{\Psi}}
\def\BB{\mathbb{B}}
\def\RR{\mathbb{R}}
\def\ZZ{\mathbb{Z}}
\def\TT{\mathbb{T}}
\renewcommand\SS{\mathbb{S}}
\newcommand{\cA}{{\mathcal A}}
\newcommand{\cK}{{\mathcal K}}
\newcommand{\cM}{{\mathcal M}}
\newcommand{\cP}{{\mathcal P}}
\newcommand{\cU}{{\mathcal U}}
\newcommand{\pd}{\partial}
\newcommand\minus\backslash
\newcommand{\id}{{\rm{id}}}
\newcommand\lan\langle
\newcommand\ran\rangle
\newcommand{\supp}{\operatorname{supp}}
\newcommand{\I}{{\mathrm i}}
\newcommand{\e}{{\mathrm e}}
\DeclareMathOperator\diag{diag} 
\DeclareMathOperator\dist{dist}
\newcommand\DD{\mathbb{D}}
\renewcommand\leq\leqslant
\renewcommand\geq\geqslant
\newlength{\intwidth}
\newcommand\loc{_{\mathrm{loc}}}
\newcommand\BOm{\overline\Om}
\def\hTe{\widehat\Theta}
\def\hw{\hat w}
\begin{document}

\title[Level sets of solutions to an elliptic PDE]{Submanifolds that are level sets of solutions to a second-order elliptic PDE}

\author{Alberto Enciso and Daniel Peralta-Salas}
\address{Instituto de Ciencias Matemáticas (CSIC-UAM-UC3M-UCM),
  Consejo Superior de Investigaciones Científicas, C/ Nicol\'as Cabrera 15, 28049 Madrid, Spain}
\email{aenciso@icmat.es, dperalta@icmat.es}


%
%
\begin{abstract}
  Motivated by a question of Rubel, we consider the problem of
  characterizing which noncompact hypersurfaces in $\RR^n$ can be
  regular level sets of a harmonic function modulo a $C^\infty$
  diffeomorphism, as well as certain generalizations to other PDEs. We
  prove a versatile sufficient condition that shows, in particular,
  that any nonsingular algebraic hypersurface whose connected components are all noncompact
  can be transformed onto a union of components of the zero set of a
  harmonic function via a diffeomorphism of $\RR^n$. The technique we
  use, which is a significant improvement of the basic strategy we
  recently applied to construct solutions to the Euler equation with
  knotted stream lines (Ann.\ of Math.\ 175 (2012) 345--367), combines robust but not explicit local constructions with appropriate global approximation theorems. In view of applications to a problem of Berry and Dennis, intersections of level sets are also studied.
\end{abstract}
\maketitle

\section{Introduction}

A long-standing open problem on level sets of solutions to elliptic PDEs, formulated by L.A.\ Rubel and included in~\cite[Problem 3.20]{CCH80} and~\cite[Problem R.7]{LN88}, is to characterize harmonic functions in $\RR^n$ up to homeomorphism. That is, given a continuous function $f$ on $\RR^n$, one would like to know whether there is a homeomorphism $\Phi$ of $\RR^n$ such that $u(x):=f(\Phi(x))$ is harmonic. In particular, a necessary condition for the existence of $\Phi$ is that the collection of all level sets of $f$ must be homeomorphic to that of a harmonic function.

Actually, Rubel's problem was motivated by the foundational results of Kaplan~\cite{Ka48} and Boothby~\cite{Bo51} in the theory of foliations, who studied a less stringent version of Rubel's problem in the case of dimension $n=2$. This relaxed formulation, which is more natural from a topological viewpoint and goes back to Morse~\cite{Ma46}, is tantamount to characterizing the collection of level sets of a harmonic function up to homeomorphism. Equivalently, given a continuous function $f$ on $\RR^n$, the question is whether there exist a harmonic function $u$ in $\RR^n$ and a homeomorphism $\Phi$ of $\RR^n$ which maps connected components of any level set $f^{-1}(c)$ into connected components of level sets of $u$.

Even with Kaplan and Boothby's relaxed formulation, Rubel's problem is wide open. The easiest case is that of dimension $n=2$, where complex analytic methods are of great help in the study of the singular foliations defined by harmonic functions~\cite{Ka48,Bo51,ZM08}. Particularly, these authors used Stoilow's theorem to show that, for any continuous function $f$ on $\RR^2$ whose level sets satisfy certain necessary conditions, there are a homeomorphism $\Phi$ mapping the unit disk $\DD^2$ onto $\RR^2$ and a harmonic function $v$ on $\DD^2$ such that $v(x)=f(\Phi(x))$. In regard to harmonic functions on the whole plane instead of on the disk, these authors derived nontrivial topological obstructions (both for the relaxed and Rubel's formulations of the problem) on the admissible families of level sets of $f$ as a consequence of Picard's theorem, and constructed infinitely many non-homeomorphic families of level sets of harmonic functions. A sufficient criterion for $f$ to be homeomorphic to a harmonic polynomial was obtained by Shiota~\cite{Sh81}.

To the best of our knowledge, there are no results related to Rubel's problem in any dimension $n\geq3$. The reason for this is twofold. Analytically, all existing results in the case $n=2$ have been obtained using complex analytic techniques, which are no longer applicable when $n\geq3$. Topologically, the situation is much simpler when $n=2$ because any connected component of a regular level set of a function $u$ on $\RR^2$ is homeomorphic to a line or to a circle, and the latter possibility must be obviously excluded in the case where $u$ is harmonic. On the contrary, in higher dimension there are infinitely many topological types of level sets and it is not obvious which ones can actually be level sets of a harmonic function.

Our objective in this paper is to develop some tools to analyze which hypersurfaces $L$ in
$\RR^n$ can be regular level sets of a harmonic function up to
diffeomorphism. In particular, we aim to explore to what extent there
is a large collection of smooth, properly embedded hypersurfaces that can be level sets of a
harmonic function. Roughly speaking, this would measure the complexity
of the (singular) foliations one would need to consider in any approach to the higher dimensional Rubel problem. Other than the obvious obstruction that the hypersurface $L$ must be noncompact and the analysis of explicit examples of harmonic functions, there are no results on the admissible topological types of $L$. For instance, in dimension $n=3$ it is easy to construct explicit harmonic functions having level sets with a connected component homeomorphic to elementary surfaces such as the plane or the cylinder, but nothing is known regarding more complicated objects as in the following

\begin{question}\label{Q.1}
  Is there a harmonic function $u$ in $\RR^3$ such that $u^{-1}(0)$ has a connected component homeomorphic to the genus $g$ torus with $N$ ends, to the torus of infinite genus  or to the infinite jungle gym? (cf.\ Figure~\ref{F.jungle}).
\end{question}

\begin{figure}[t]
  \centering
  \subfigure{
\includegraphics[scale=0.2,angle=0]{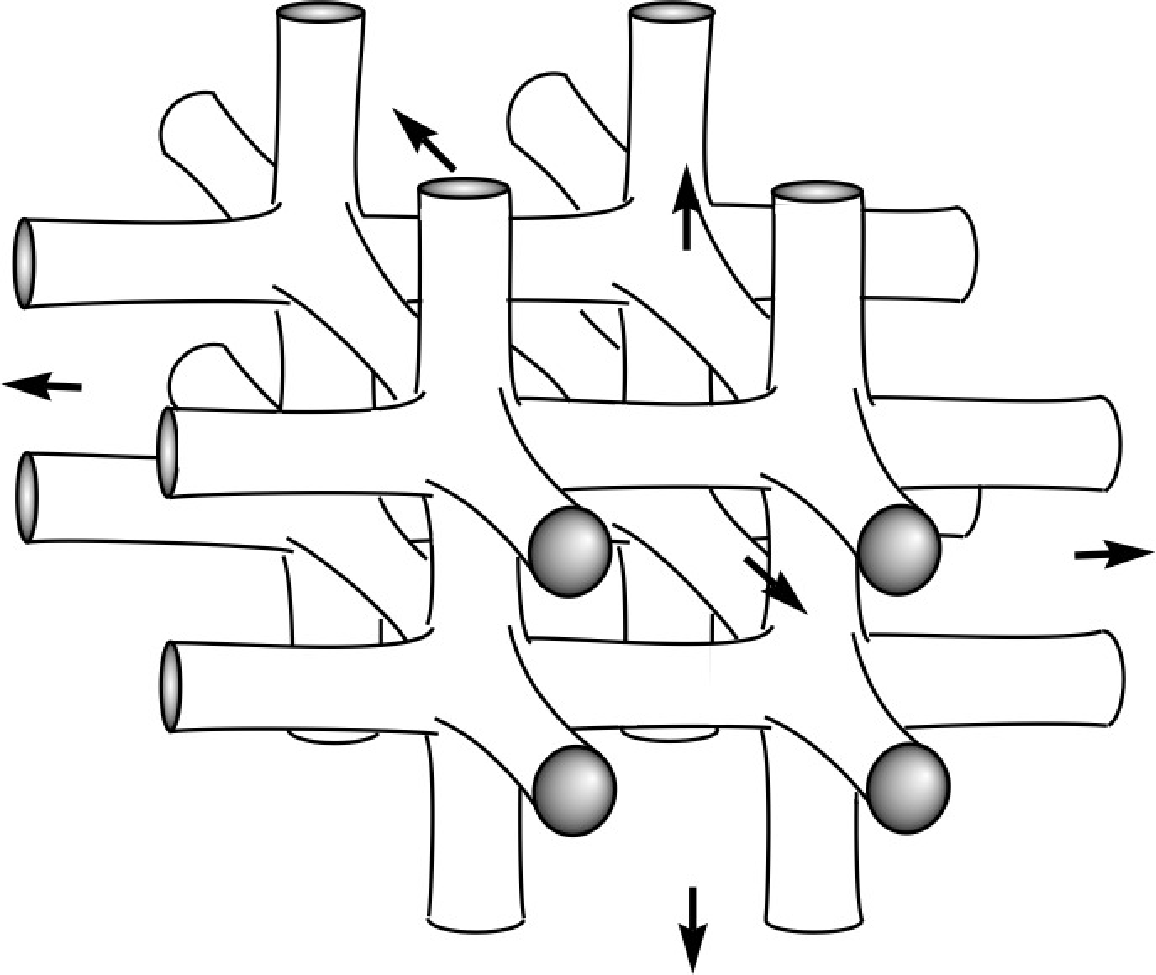}}\hspace{1.5em}
  \subfigure{
\includegraphics[scale=0.09,angle=69]{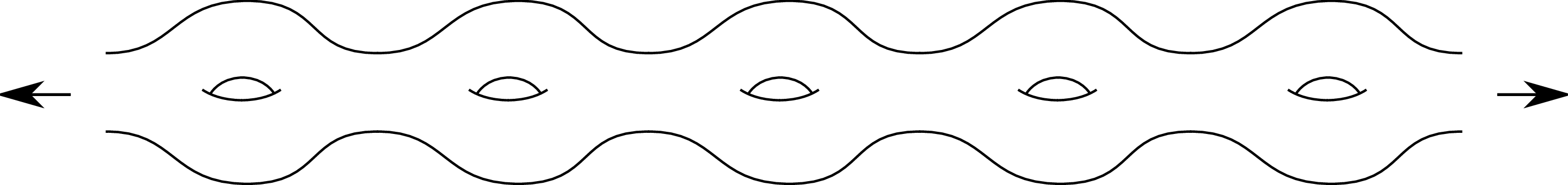}}\hspace{1.5em}
  \subfigure{
\includegraphics[scale=0.23,angle=0]{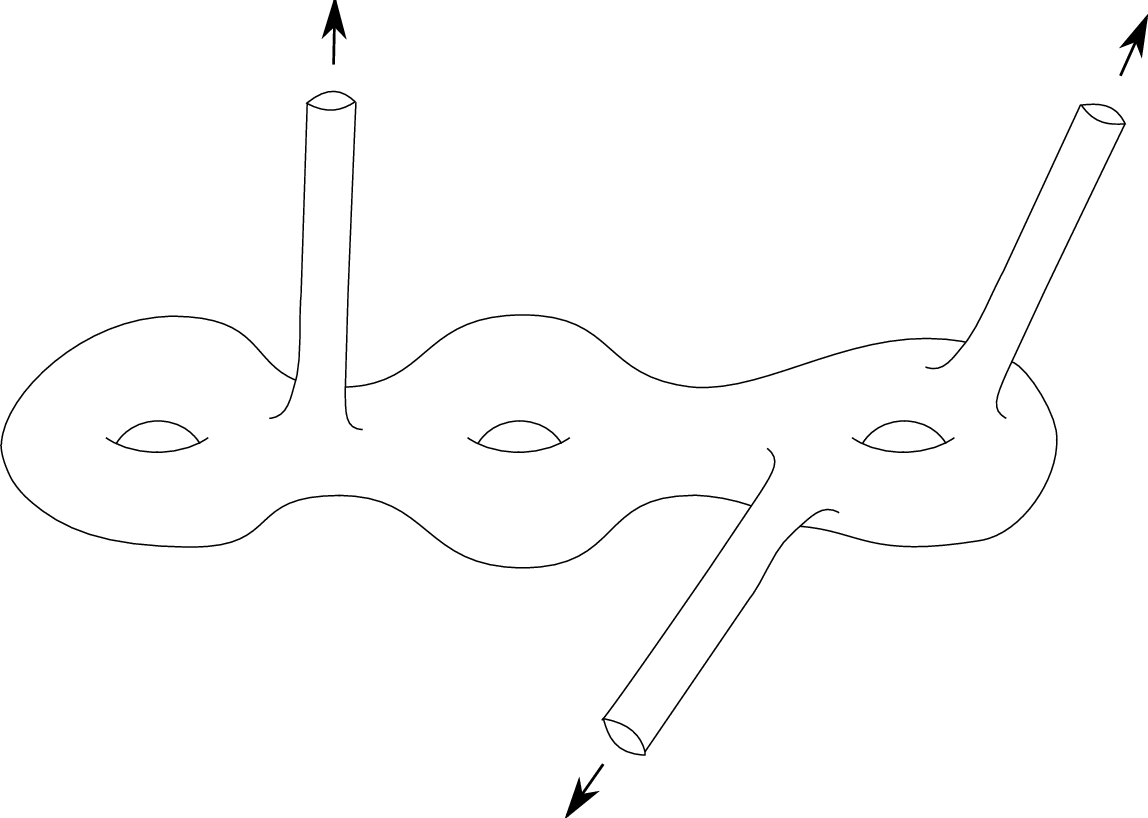}}
\caption{From the left to the right, an infinite jungle gym, a torus of infinite genus and a torus of genus $g=3$ and $N=3$ ends.} 
\label{F.jungle}
\end{figure}

Our main theorem provides a sufficient condition for a hypersurface to
be a connected component of the zero set of a harmonic function in
$\RR^n$ up to a diffeomorphism, thereby furnishing a positive answer
to Question~\ref{Q.1} as a particular case. More generally, we will
prove this result for the equation $(\De-k^2)u=0$, with $k$ a real
constant, and for possibly disconnected hypersurfaces. A somewhat
related result, which gives sufficient conditions for a set of curves
in the plane to be homeomorphic to the zero set of a harmonic
polynomial, has been recently established in~\cite{EJN07}.

Thinking for concreteness in the three-dimensional situation, the strategy of the proof is to construct a {\em local}\/ harmonic function having a level set diffeomorphic to the surface (for instance, the genus $g$ torus with $N$ ends, the torus of infinite genus  or the infinite jungle gym, as in Question~\ref{Q.1}) whose gradient and nearby level sets satisfy certain (rather subtle) geometric conditions. Once this function has been constructed, we show that there is a {\em global}\/ harmonic function that approximates the local solution uniformly in a neighborhood of the surface, and prove that this global solution has a level set diffeomorphic to the surface using a noncompact stability theorem and the geometric properties of the local solution. In order to carry out this strategy, it is crucial to deform the surface to ensure that its geometry is controlled at infinity, for example by transforming the collared ends of the surface into straight cylinders (as in the case of the torus with $N$ ends) or by making the surface invariant under a discrete group of isometries (as in the case of the infinite jungle gym or the torus of infinite genus). As a consequence of this, the homeomorphism that appears in Question~\ref{Q.1} will be obtained as the composition of a `large' diffeomorphism used to control the geometry at infinity of the surface and a `small' diffeomorphism related to the approximation of the local harmonic function. A simpler application of the philosophy of combining a robust local construction with a suitable global approximation theorem has been recently applied to study periodic trajectories of solutions to the Euler equation~\cite{AoM}.

Because of this two-stage construction of the diffeomorphism, the theorem can be most conveniently formulated in terms of a natural $n$-dimensional analog of these surfaces with controlled geometry at infinity, which we call {\em tentacled hypersurfaces}. The precise definition of the latter is given in Sections~\ref{S.finite} and~\ref{S.infinite}. For the purposes of this Introduction the reader can stick to the three-dimensional case as above, keeping in mind that, for instance, any hypersurface obtained from a compact manifold by removing patches and attaching `nicely embedded' collared ends (the `tentacles') is diffeomorphic to a tentacled hypersurface.

\begin{theorem}\label{T.1}
  Let $L$ be a tentacled hypersurface of $\RR^n$ (possibly disconnected
  and of infinite type) and let $k$ be a real constant. Then one can transform the hypersurface $L$ by a smooth diffeomorphism $\Phi$ of $\RR^n$, arbitrarily close to the identity in the $C^1$ norm, so that $\Phi(L)$ is a union of connected components of a level set $u^{-1}(c_0)$, where the function $u$ satisfies the equation $(\De-k^2)u=0$ in $\RR^n$.
\end{theorem}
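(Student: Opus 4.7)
\medskip

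\noindent\textbf{Proof proposal.} My plan is to follow the two-stage strategy hinted at in the Introduction: build a local solution of $(\Delta-k^2)u=0$ in a tubular neighborhood of $L$ that has $L$ as a regular level set with quantitative transversality, then approximate this local solution by a global solution, and finally use a noncompact stability result to transfer the level set from the local to the global solution via a small diffeomorphism.

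\emph{Local construction.} First I would exploit the tentacled structure of $L$ to reduce to a situation where the ends are either straight half-cylinders (finite type) or are periodic under a discrete group of Euclidean isometries (infinite type). In a tubular neighborhood $U$ of $L$ I set up Fermi coordinates $(y,t)$ with $y\in L$ and $t$ the signed distance to $L$, and seek a local solution of the form $v(y,t)=t+O(t^2)$ which vanishes exactly on $L$ and has $|\nabla v|$ bounded away from $0$ on $L$. The equation $(\Delta-k^2)v=0$ becomes a second-order ODE in $t$ with coefficients depending on the extrinsic geometry of $L$, which I would solve to high enough order on $L$ and then extend; on the tentacles, the straight-cylinder or periodic symmetry lets me choose $v$ with uniform transversality constants at infinity. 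The key output is a function $v$ solving $(\Delta-k^2)v=0$ on $U$ with $v^{-1}(0)=L$, $|\nabla v|\geq c>0$ on $L$, and uniform control of the second fundamental forms of the nearby level sets $v^{-1}(c)$ for $|c|$ small.

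\emph{Global approximation.} Next I would appeal to a Runge/Lax--Malgrange type theorem for the operator $\Delta-k^2$ in $\RR^n$: since $\RR^n\setminus U$ has no bounded components (this is where the noncompactness and the tentacled structure at infinity are crucial, rather than mere topological triviality), one can approximate $v$ uniformly on a slightly smaller neighborhood $U'\Subset U$ by a global solution $u$ of $(\Delta-k^2)u=0$ in $\RR^n$. The approximation will be arbitrarily close in the $C^1$ norm on $U'$, possibly after also controlling $\nabla u-\nabla v$ by standard interior elliptic estimates. In the infinite-type setting I expect to need a version of this approximation that respects, or at least is compatible with, the periodicity of the ends, so that the approximation error does not grow along the tentacles.

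\emph{Stability and the diffeomorphism.} The final step is a noncompact implicit function theorem (a ``stability of regular level sets'' statement): if $u$ is $C^1$-close to $v$ on $U'$, the uniform transversality $|\nabla v|\geq c$ and the uniform geometric bounds along $L$ and its tentacles allow me to produce, by flowing along $\nabla u/|\nabla u|^2$, a diffeomorphism $\Phi$ of $\RR^n$ that is $C^1$-close to the identity, that is the identity outside a neighborhood of $L$, and that carries $L$ onto a union of connected components of $u^{-1}(c_0)$ for a suitable $c_0$ (obtained as a regular value close to the common value of $v$ on $L$). The fact that the isotopy can be globalized into a diffeomorphism of $\RR^n$, rather than just of $U$, follows because $\Phi$ differs from the identity only in a controlled tubular neighborhood.

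\emph{Main obstacle.} The hardest step will be the noncompact stability argument, because on an infinite-type tentacle the gradient flow of $\nabla u/|\nabla u|^2$ must be controlled uniformly over an unbounded region, and the approximation error, although small in sup norm on $U'$, could in principle accumulate along the tentacles and destroy transversality. This is exactly why I would insist on formulating the local model with uniform geometric bounds (cylindrical or periodic ends) and on choosing the approximating global solution so that the $C^1$ error is small uniformly along the ends; once these ingredients are in place, the implicit function argument becomes a routine fixed-point computation in a weighted Banach space.
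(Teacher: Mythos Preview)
Your overall architecture (local solution, global approximation, noncompact stability) matches the paper's, but the local construction you sketch has a genuine gap and is precisely the route the paper explicitly rejects.

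In Fermi coordinates $(y,t)$ the Laplacian is $\partial_t^2+H(y,t)\,\partial_t+\Delta_{L_t}$, with $\Delta_{L_t}$ the Laplacian on the equidistant hypersurface; the equation $(\Delta-k^2)v=0$ is \emph{not} an ODE in $t$ unless you ignore the tangential part. ``Solving to high order in $t$ and extending'' is just the Cauchy--Kowalewski construction, and the paper points out (Section~\ref{S.guide}) that this gives no control over the width of the domain of existence along the noncompact ends. What you would obtain is a solution on a neighborhood of $L$ whose thickness may shrink to zero along each tentacle, so neither the saturation condition~(i) nor the uniform gradient condition~(ii) of Theorem~\ref{T.stab} can be verified. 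You cannot repair this by gluing a Cauchy--Kowalewski solution near the compact core to the symmetric cylindrical solutions on the tentacles, because partition-of-unity cutoffs destroy the equation.

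The paper's local construction is substantially different and is where most of the work lies. For each component $L_0=\partial\Omega$ it uses the Dirichlet Green's function $G_\Omega$ of $\Delta-k^2$ on the tentacled domain $\Omega$, places a straight half-line $\gamma_i$ in each tentacle, and sets
\[
v(x)=\sum_i\int_{\gamma_i}G_\Omega(x,y)\,d\mu_i(y)\,.
\]
This gives $v|_{L_0}=0$ and $(\Delta-k^2)v=0$ in a \emph{one-sided} half-neighborhood of $L_0$ (inside $\Omega$, away from the rays). The uniform gradient and $C^2$ bounds are obtained by comparing $G_\Omega$ with the Green's function of an infinite straight cylinder (Lemmas~\ref{L.GS}--\ref{L.GOmS}), using second-order pointwise estimates with exponential decay (Theorem~\ref{T.Jap}) made available by $\lambda_\Omega>0$ (Lemma~\ref{L.laOm}). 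In the infinite-type case the same scheme is run on a periodic tentacled domain, summing translated copies of the Green's-function integrals and using $\lambda_{\mathcal U}>0$ from Floquet theory (Lemma~\ref{L.periodic}). None of this information is visible from a Fermi-coordinate expansion. Note also that because $v$ lives only on one side of $L_0$, the paper first applies the one-sided stability result (Corollary~\ref{C.Thom}) with $g=v-c$ to move to an interior level set $v^{-1}(c_0)$, and only then invokes the global approximation (Theorem~\ref{T.approx}) and the two-sided stability theorem.
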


It should be emphasized that the class of hypersurfaces that can be
realized as a tentacled hypersurface modulo diffeomorphism is
deceivingly wide; in particular, this includes all nonsingular real algebraic
hypersurfaces whose components are all noncompact, which allows us to state the following

\begin{corollary}\label{C.algebraic}
Let $L$ be a smooth embedded hypersurface of $\RR^n$ whose connected
components are all noncompact. Suppose $L$ is a nonsingular real
algebraic hypersurface. Then,
given any real constant $k$, the hypersurface $L$ can be transformed by a smooth diffeomorphism $\Phi$ of $\RR^n$, not necessarily small, so that $\Phi(L)$ is a union of connected components of a level set of a function $u$ satisfying the equation $(\De-k^2)u=0$ in $\RR^n$.
\end{corollary}

As a matter of fact, the proof of this result ensures that the
singular foliation defined by the level sets of the solution $u$ is a
trivial bundle in a neighborhood of $\Phi(L)$. That is, there is a
neighborhood of $\Phi(L)$ saturated by level sets of $u$ where the
foliation is equivalent to the product $\Phi(L)\times (-1,1)$.

A word of caution regarding the terminology is in order. When we say
that $L$ is a smooth, embedded hypersurface, we mean that all its
connected components are codimension-$1$, $C^\infty$, properly
embedded submanifolds of $\RR^n$. We recall that a nonsingular real
algebraic hypersurface can be written as the zero set of a polynomial
in $\RR^n$ whose gradient does not vanish at any point of the hypersurface. It should also be noticed that the
algebraic hypersurface $L$ considered in Corollary~\ref{C.algebraic}
does not have any singular points or self-intersections and, as such,
encloses no bounded domains. Without the smoothness requirement
(which is essentially used in our approach, since the proofs are based
on ``differentiable'' methods), $L$ would not need to be homeomorphic
to a union of connected components of a harmonic function, as shown by
the example $L:=\{(x_1^2+x_2^2+x_3^2-1)x_3=0\}$ in $\RR^3$.

The subtleties that appear in the proof of these results are related
to the necessity of combining the rigid, quantitative methods of
analysis with the flexible, qualitative techniques of differential
topology. A simple yet illustrative example is the {\em necessity}\/
of allowing for a small diffeomorphism $\Phi\neq\id$ in the above
theorems: indeed, it is well known that the curves $x_2=x_1^s$, which
are all diffeomorphic for any integer $s\geq1$, can be a connected
component of a level set of a harmonic function in $\RR^2$ if and only
if $s=1$ or $s=2$~\cite{FNS66}.

After discussing the level sets of a harmonic function, we will next consider regular joint level sets of $m$ harmonic functions, i.e., transverse intersections of their individual level sets. Unlike a single level set, these intersections can be compact provided the number of functions $m$ is at least $2$. Again, nothing is known on this problem, particularly if one is interested in sets diffeomorphic to pathological objects as in the following

\begin{question}\label{Q.2}
  Let $L$ be an exotic sphere of dimension $m$, so that $L$ is a smooth manifold homeomorphic to the standard $m$-sphere but not diffeomorphic to it. It is well known that $L$ can be smoothly embedded in $\RR^{2m}$. Are there $m$ harmonic functions $u_r$ in $\RR^{2m}$ such that the exotic sphere $L$ is diffeomorphic to a component of the joint level set $u_1^{-1}(0)\cap\cdots \cap u_m^{-1}(0)$?
\end{question}

Our main motivation to consider intersections of level sets comes from Berry and Dennis' question~\cite{BD01} of whether any finite link (that is, a collection of pairwise disjoint knots) can be a a union of connected components of the zero set of a complex-valued solution to the~PDE $(\De+k^2)u=0$ in $\RR^3$. Their question, which is of interest in the study of dislocation structures, is a particular case of the following

\begin{question}\label{Q.3}
  Let $L$ be a locally finite link in $\RR^3$. Are there two solutions $u_r$ of the equation $(\De+k^2)u_r=0$ in $\RR^3$ such that $L$ is diffeomorphic to a union of connected components of the joint level set $u_1^{-1}(0)\cap u_2^{-1}(0)$?
\end{question}

We shall next state our main results on joint level sets of solutions to an elliptic PDE, which in particular provide affirmative answers to Questions~\ref{Q.2} and~\ref{Q.3}. Here, our results and the method of proof depend greatly on whether the joint level set we want to prescribe is compact (or, more generally, a union of compact components) or not. The case of compact joint level sets is less involved and can be tackled for a very wide class of elliptic equations:

\begin{theorem}\label{T.3}
  Given an integer $m\geq2$, let $L$ be a locally finite union of
  pairwise disjoint, compact, codimension-$m$, embedded submanifolds of $\RR^n$ with trivial normal bundle. Take $m$ real constants $c_r$ and $m$ second-order elliptic differential operators $T_r$ with real analytic coefficients. Then one can transform the submanifold $L$ by a smooth diffeomorphism $\Phi$ of $\RR^n$, arbitrarily close to the identity in any $C^p$ norm, so that $\Phi(L)$ is a union of connected components of the joint level set $u_1^{-1}(c_1)\cap\cdots\cap u_m^{-1}(c_m)$ of functions $u_r$ that satisfy the equations $T_ru_r=0$ in~$\RR^n$.
\end{theorem}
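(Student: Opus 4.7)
The plan follows the ``robust local construction + global approximation + noncompact stability'' paradigm highlighted in the introduction.

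\emph{Local construction.} By Whitney's analytic approximation of smooth manifolds, after a small preliminary diffeomorphism I may assume every component $L_j$ of $L$ is real analytic. Its trivial normal bundle then provides an analytic tubular neighborhood $U_j \cong L_j \times B^m$ with coordinates $(y,z)$, and the level sets $H_{r,j} := \{z_r = 0\}$ are real analytic hypersurfaces meeting transversally along $L_j = \bigcap_r H_{r,j}$. Since every hypersurface is noncharacteristic for an elliptic operator, the Cauchy--Kovalevskaya theorem produces a real analytic local solution $v_r^{(j)}$ of $T_r v_r^{(j)} = 0$ in a neighborhood of $H_{r,j}$ with Cauchy data $v_r^{(j)}|_{H_{r,j}} = c_r$ and $\partial_{z_r} v_r^{(j)}|_{H_{r,j}} = 1$. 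At each point of $L_j$ the gradient $\nabla v_r^{(j)}$ is normal to $H_{r,j}$ and non-zero, so the $m$ gradients are linearly independent; hence $L_j$ is an entire transverse connected component of the local joint level set $\bigcap_r (v_r^{(j)})^{-1}(c_r)$ inside some smaller neighborhood $U'_j \Subset U_j$, with quantitative control on the transverse structure.

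\emph{Global approximation.} After shrinking the $U_j$ to be pairwise disjoint, the local solutions assemble into functions $v_r$ on $V := \bigsqcup_j U_j$ satisfying $T_r v_r = 0$. Because $L$ has codimension $m \geq 2$ and is locally finite, $\RR^n \setminus V$ has no bounded connected component, so the Runge-type theorem of Lax--Malgrange--Browder for elliptic operators with analytic coefficients applies. A Carleman-type iteration then yields global solutions $u_r$ of $T_r u_r = 0$ in $\RR^n$ with $\|u_r - v_r\|_{C^{p+1}(U'_j)} < \varepsilon_j$ for any prescribed positive sequence $\varepsilon_j$, interior elliptic regularity being used to upgrade from $L^2$- to $C^{p+1}$-approximation.

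\emph{Stability and conclusion.} Transverse intersections are open in the $C^1$-topology, so if $\varepsilon_j$ is small enough in terms of the transverse structure of the $v_r^{(j)}$ near $L_j$, the implicit function theorem supplies a $C^p$-small diffeomorphism $\Phi_j$ of $\RR^n$, equal to the identity outside $U'_j$, such that $\Phi_j(L_j)$ is a connected component of $\bigcap_r u_r^{-1}(c_r)$. Since the supports are pairwise disjoint, their composition $\Phi$ is a globally defined $C^p$-small diffeomorphism of $\RR^n$, and $\Phi(L)$ is a union of connected components of the joint level set of the $u_r$, as required.

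\emph{Main obstacle.} The delicate step is to match the global approximation tolerance $\varepsilon_j$ to the stability threshold on $U'_j$: because the geometry of the components $L_j$ and of the Cauchy solutions $v_r^{(j)}$ may degenerate along the sequence, the required $\varepsilon_j$ can decay arbitrarily fast, so a Carleman-type strengthening of the classical Lax--Malgrange theorem is essential. I expect this quantitative coupling between the three steps to be the technical core of the proof.
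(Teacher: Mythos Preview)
Your proposal is correct and follows essentially the same route as the paper: analytic perturbation of $L$ via Whitney, local Cauchy--Kowalewski solutions on the analytic normal hypersurfaces, a better-than-uniform (Carleman-type) Lax--Malgrange approximation to handle the possibly decaying tolerances $\varepsilon_j$, and compact Thom-type stability on each component. The paper packages the Carleman approximation as a separate lemma (an iterative Lax--Malgrange argument over a carefully chosen compact exhaustion) and uses its own flow-based stability theorem in place of the implicit function theorem, but in the compact setting these are interchangeable and your identification of the ``quantitative coupling'' as the technical core is exactly right.
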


Our approach to the case where the prescribed joint level set has a noncompact component strongly depends on the method of proof of Theorem~\ref{T.1}, so the result will be stated in terms of the codimension-$m$ analog of a tentacled hypersurface, as defined in Section~\ref{S.compact}. To this noncompact submanifold we can also add a union of compact components of the kind dealt with in the previous theorem:

\begin{theorem}\label{T.2}
  Given an integer $m\geq2$, let $L$ be a tentacled submanifold of
  codimension $m$ in $\RR^n$ (possibly disconnected and of infinite
  type) and let $\tilde L$ be a finite union of pairwise disjoint,
  compact, codimension $m$ submanifolds of $\RR^n$ with trivial normal
  bundle. We assume that $L$ and $\tilde L$ are disjoint and fix $m$
  real constants $k_r$. Then the codimension-$m$ submanifold $L\cup
  \tilde L$ can be transformed by a smooth diffeomorphism $\Phi$ of
  $\RR^n$, arbitrarily close to the identity in the $C^1$ norm, so
  that $\Phi(L\cup\tilde L)$ is a union of connected components of the
  joint level set $u_1^{-1}(c_1)\cap \cdots \cap u_m^{-1}(c_m)$, each
  function $u_r$ satisfying the equation $(\De-k_r^2)u_r=0$ in
  $\RR^n$.
\end{theorem}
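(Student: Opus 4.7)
The strategy is to combine the local-to-global technique underlying Theorem~\ref{T.1} with the compact joint level set construction of Theorem~\ref{T.3}, adapted to codimension $m$ and noncompact targets. The idea is to build an $m$-tuple of local solutions whose joint level set is $L\cup\tilde L$ with transverse gradients, to approximate them globally by solutions of the respective Helmholtz equations, and to recover $\Phi(L\cup\tilde L)$ as a structurally stable joint level set.

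First I would construct, in a tubular neighborhood $U$ of $L\cup\tilde L$, functions $v_1,\dots,v_m$ with $(\De-k_r^2)v_r=0$ in $U$, such that $L\cup\tilde L$ is the joint level set $\{v_1=c_1,\dots,v_m=c_m\}\cap U$ and the gradients $\nabla v_1,\dots,\nabla v_m$ are linearly independent at every point of $L\cup\tilde L$. Near the compact components $\tilde L$ I follow the proof of Theorem~\ref{T.3}: the trivial normal bundle supplies $m$ smooth defining functions, which are perturbed to a real-analytic $m$-tuple and upgraded to solutions of the individual equations by a Cauchy--Kowalewski extension. Near the tentacled piece $L$ I use its decomposition into a compact core and controlled tentacles; on the tentacles (either straight cylinders or submanifolds invariant under a discrete group of isometries, according to the definition) explicit solutions of each $(\De-k_r^2)v_r=0$ with the required vanishing and transversality can be written down, and on the core the same Cauchy--Kowalewski scheme applies. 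A cut-off matches the two constructions on the overlap; the resulting functions are only approximate solutions, but a small inhomogeneous correction (solvable because $U$ retracts onto $L\cup\tilde L$) turns them into genuine solutions without disturbing the joint level set or transversality.

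The second step is a simultaneous global approximation. By the definition of a tentacled submanifold, the complement of a slightly enlarged tubular neighborhood of $L\cup\tilde L$ has no bounded connected components, so a Runge-type approximation theorem for each scalar equation $(\De-k_r^2)u_r=0$ in $\RR^n$ produces global solutions $u_r$ that are $C^1$-close to $v_r$ on $U$. A noncompact codimension-$m$ stability theorem, the natural generalization of the one used for Theorem~\ref{T.1}, then yields a smooth $C^1$-small diffeomorphism $\Phi$ of $\RR^n$ such that $\Phi(L\cup\tilde L)$ is a union of connected components of $\bigcap_r u_r^{-1}(c_r)$.

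The main obstacle I expect is this codimension-$m$ noncompact stability step. For a single hypersurface, stability under $C^1$-small perturbations is immediate from an implicit function theorem along the normal direction, but for joint intersections of $m$ hypersurfaces one needs \emph{uniform} linear independence of the $m$ approximate gradients along the entire noncompact submanifold. This is precisely what the tentacled structure is designed to ensure: the cylindrical or discretely periodic geometry at infinity forces a uniform positive lower bound on the volume of the parallelepiped spanned by $\nabla v_1,\dots,\nabla v_m$ along $L$, and this lower bound survives a sufficiently small $C^1$ approximation. Once the uniform transversality is secured, a standard implicit function argument in the normal bundle of $L\cup\tilde L$ produces the global diffeomorphism $\Phi$ and completes the proof.
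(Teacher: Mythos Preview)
Your overall architecture (local construction, global approximation, noncompact stability) matches the paper, and your treatment of the compact piece $\tilde L$ via Cauchy--Kowalewski is exactly what the paper does. The genuine gap is in your local construction near the noncompact tentacled piece $L$.

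You propose to build the $v_r$ directly in a tubular neighborhood of $L$ by Cauchy--Kowalewski on the compact core, ``explicit solutions'' on the tentacles, and a cut-off plus an inhomogeneous correction on the overlap. This last step is the problem: after cutting off you have $(\De-k_r^2)v_r=f_r$ with $f_r$ supported in the overlap, and you need to solve $(\De-k_r^2)w_r=f_r$ on the \emph{noncompact} tubular neighborhood with $w_r$ small in $C^2$ uniformly out to infinity. The fact that $U$ retracts onto $L\cup\tilde L$ is a topological statement and does not give this; in fact, controlling the domain of definition and the $C^2$ size of solutions at infinity is precisely the obstacle that rules out a Cauchy--Kowalewski approach already in the hypersurface case (see the discussion in Section~\ref{S.guide}). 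Your ``explicit solutions on the tentacles'' is also left vague, and matching $m$ such solutions simultaneously to the core solutions while preserving both the PDE and the joint zero set is not automatic.

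The paper bypasses all of this by exploiting Definition~\ref{D.joint}: a codimension-$m$ tentacled submanifold is \emph{by definition} the transverse intersection of $m$ tentacled hypersurfaces $L^1,\dots,L^m$. For each $L^r$ separately, the Green's function construction from the proof of Theorem~\ref{T.finite} (or~\ref{T.infinite}) already produces a local solution $v_r$ in a half-neighborhood of the entire hypersurface $L^r$, with uniform gradient lower bound and bounded $C^2$ norm. No patching or correction is needed. The uniform linear independence of $\nabla v_1,\dots,\nabla v_m$ along $L$ then follows from transversality of the $L^r$ and the tentacled geometry, and Corollary~\ref{C.Thom} plus Theorems~\ref{T.approx} and~\ref{T.stab} finish the argument. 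So the missing idea in your plan is to work hypersurface-by-hypersurface via the Green's function machinery already built, rather than attempting a direct codimension-$m$ local construction.
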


The proof of these results also rely on the combination of a robust local construction with a suitable global approximation. It is worth emphasizing that our treatment of noncompact joint level sets is considerably simplified by the fact that the diffeomorphisms that appear in Theorem~\ref{T.1} can be chosen $C^1$-small (and not only $C^0$-small). In proving this bound for the diffeomorphism, we make essential use of fine $C^2$-estimates for some Green's functions; had we restricted ourselves to the standard ($C^1$) gradient estimates, the diffeomorphisms would have been only $C^0$-small and the treatment of joint level sets would have been much more involved because the transversality of the intersection of hypersurfaces is generally not preserved when one applies a $C^0$-small diffeomorphism to each hypersurface. The lack of appropriate estimates for higher derivatives of the Green's functions also lies at the heart of why, in the noncompact case, we can make all the above diffeomorphisms $C^1$-small but not $C^p$-small for all $p$.

The paper is organized as follows. To begin with, in
Section~\ref{S.guide} we provide a guide to the demonstration, where
we present the global picture in a less technical manner and explain
the relationships between the different parts of the proof. In
Section~\ref{S.isotopy} we prove the novel noncompact stability
theorems that extend Thom's isotopy theorem to the noncompact
case. These results play a key role in the proof of Theorems~\ref{T.1}
and~\ref{T.2}, and are proved using a semi-explicit construction and
an appropriate gradient condition to deal with the lack of
compactness. In Section~\ref{S.approx} we prove the necessary global
approximation results that are needed in the rest of the paper,  which
generalize theorems of Gauthier, Goldstein and Ow~\cite{GGO83} and
Bagby~\cite{Ba88}. Once these crucial preliminary results have been
established, we present the proofs of Theorem~\ref{T.1} and
Corollary~\ref{C.algebraic} in Sections~\ref{S.finite}
and~\ref{S.infinite}, and those of Theorems~\ref{T.3} and~\ref{T.2} in
Section~\ref{S.compact}.

Summing up, the results we prove in this paper provide a satisfactory
understanding of the topology of the regular level sets of
solutions to certain elliptic PDEs, both scalar and vector-valued
(through the intersection of level sets of scalar solutions). Although
the main ideas of the proofs are similar, the class of elliptic
equations we can treat in each case is different, mainly because in
order to obtain fine control of the solutions at infinity (as in
Theorems~\ref{T.1} and~\ref{T.2}, but not in Theorem~\ref{T.3}) it is crucial
to consider more restricted classes of equations, which explains the
need for separate statements. For completeness, we have also included 
Appendix~\ref{S.appendix}, where we consider the case of one scalar
solution with a compact level set: while this situation cannot happen e.g.\ in the case of harmonic functions, which is the main thrust of the article, we show that any compact hypersurface is diffeomorphic to a level set of a solution of the equation $(\De-q)u=0$ in $\RR^n$ under fairly general hypotheses on the function $q$.

\section{Guide to the paper and strategy of proof}
\label{S.guide}

In this section we shall give the global picture of the proof of the main results, which will enable us to show the connections between the different parts of the proof without technicalities. We will also present the organization of the article, explaining the role that each section plays in the proof of the theorems. This section is divided into two parts, corresponding to the case of the level sets of a single function and to the case of joint level sets.

\subsection{Level sets of a single function}

Theorem~\ref{T.1} is motivated by Question~\ref{Q.1}: which surfaces are homeomorphic to a component of the zero set of a harmonic function in $\RR^3$? Hence, to explain the gist of our approach to this theorem, we can restrict ourselves to this particular case. For the sake of concreteness, we shall thus start by trying to prove that there exists a harmonic function in $\RR^3$ whose zero set has a connected component diffeomorphic to the torus of genus $g$ with $N$ ends, as displayed in Figure~\ref{F.jungle}.

The basic strategy to construct a harmonic function such that the above surface, which we shall call $L$, is diffeomorphic to a connected component of its zero set is the following. We will start with a function $v$ that is harmonic in a neighborhood of the surface and has a level set diffeomorphic to $L$. We will then approximate this local solution by a global harmonic function $u$. Hence, the key for the success of this strategy is to be able to ensure that the level set of the local harmonic function is `robust' in the sense that it is preserved, up to a diffeomorphism, under the perturbation corresponding to the above global approximation.

Let us elaborate on this point to show the kind of difficulties that
arise when carrying out this program. We have seen that the starting
point of the strategy must be a topological stability theorem for the
level sets of the local harmonic function $v$. If these level sets
were compact, this could be accomplished using Thom's celebrated
isotopy theorem~\cite[Theorem 20.2]{AR67}; however, the fact that the
level sets of a harmonic function are all noncompact makes the problem
much subtler (indeed, controlling bounded regions of level sets is
much easier, as we showed in Ref.~\cite{Annali} to study some pinching
and bending properties in harmonic function theory). Moreover, this
stability result must be finely tailored to provide sufficient control
of the deformation at infinity, for otherwise this part would be a
bottleneck in the proof of our theorems. Therefore, one of the crucial
parts of the article is the proof of a fine $C^1$ noncompact stability
result under small perturbations, which holds provided that the
function $v$ satisfies a suitable gradient condition in a {\em
  saturated}\/ neighborhood of the surface (roughly speaking, in
$v^{-1}((-\ep,\ep))$ in order to control the zero set of $v$) and a
$C^2$ bound.

In view of this result, the local harmonic function $v$ must be constructed in such a way that the above stability conditions hold. These conditions rule out, for example, an approach based on the Cauchy--Kowalewski theorem, as it does not yield enough information on the domain of definition of the solution. Instead, we base our construction on the use of Green's functions. We take an unbounded domain $\Om$ whose boundary is diffeomorphic to the surface $L$ and consider its Dirichlet Green's function $G_\Om(x,y)$. If $y$ is a point of this domain, $G_\Om(\cdot,y)$ defines a (local) harmonic function in a half-neighborhood of the boundary, which is the zero level set of this function and diffeomorphic to the surface $L$. However, the noncompact stability theorem cannot be applied to $G_\Om(\cdot,y)$ because this function does not satisfy neither the saturation nor the gradient conditions.

In order to circumvent this difficulty, we start by deforming the surface so that it has a controlled geometry at infinity (i.e., in this particular case, that the $N$ ends of the torus are straight cylinders, as in Figure~\ref{F.jungle}). For simplicity, we keep the notation $L:=\pd\Om$ for this deformation of the initial surface (this is the reason why, without loss of generality, we are requiring rigid ends in the definition of tentacled (hyper)surfaces, cf.\ Definitions~\ref{D.tentacled} and~\ref{D.infinite}). We then insert a straight half-line in each end of the domain and, denoting by $\mu_i$ the length measure on the $i$-th half-line, consider the function
\[
v(x):=\sum_{i=1}^N \int G_\Om(x,y)\,d\mu_i(y)\,.
\]
It can be shown that this defines a function harmonic in the domain minus the half-lines whose zero set is the `tentacled' torus $L$ and which does satisfy the boundedness, saturation and gradient conditions of the $C^1$ stability theorem in a half-neighborhood of the surface. Roughly speaking, the proof of this fact is based on estimates that exploit the asymptotic Euclidean symmetries of the construction and the exponential decay of the Green's function of the domain. As an aside, notice that to resort to this construction we will need to prove the noncompact stability theorem also for functions defined only in a half-neighborhood of the surface under consideration.

The last ingredient is the theorem that allows to approximate the local harmonic function $v$ by a global harmonic function in a certain neighborhood of the surface~$L$. This approximation must be small in the $C^1$ norm in order to apply the stability theorem. If $L$ were compact, the Lax--Malgrange theorem for elliptic PDEs~\cite[Theorem 3.10.7]{Na68} would provide an adequate global approximation result. As $L$ is noncompact, however, the situation is considerably more involved. The proof of our global approximation theorem relies on an iterative procedure (which does {\em not}\/ apply to arbitrary elliptic PDEs) that is built over an appropriate exhaustion by compact sets and combines the Lax--Malgrange theorem, suitable Green's function estimates and a balayage-of-poles argument.

The implementation of this strategy will be carried out in four stages:\smallskip

\noindent{\em Stage 1: Noncompact stability theorem.} In Section~\ref{S.isotopy} we prove a stability theorem for noncompact level sets (Theorem~\ref{T.stab}) and a variant for functions defined only in a half-neighborhood of the level set (Corollary~\ref{C.Thom}). Level sets of vector-valued function are also treated in view of their applications to the case of joint level sets, to be discussed below. When applied to compact submanifolds, this result fully recovers Thom's isotopy theorem.

\noindent{\em Stage 2: Global approximation theorem.} In Section~\ref{S.approx} we prove that a local solution of the equation $(\De-q)v=0$, with $q$ a real analytic function satisfying certain hypotheses, can be approximated in the $C^p$ norm by a global solution of the equation (Theorem~\ref{T.approx}). This extends results of Gauthier, Goldstein and Ow~\cite{GGO83} and Bagby~\cite{Ba88}, dealing with equations that obey a minimum principle but not a maximum principle.

\noindent{\em Stage 3: Noncompact level sets of finite topological
  type.} In Section~\ref{S.finite} we obtain a realization theorem for
noncompact hypersurfaces of finite topological type, that is, whose
homotopy groups are all finitely generated. We start by presenting the
definition of  tentacled hypersurfaces of finite type (Definition~\ref{D.tentacled}) and characterizing the hypersurfaces that can be realized as a tentacled hypersurface modulo diffeomorphism (Proposition~\ref{P.algebraic}). After a series of intermediate lemmas we show that for any tentacled hypersurface there exists a solution of the equation $(\De-k^2)u=0$ in $\RR^n$ which has a level set diffeomorphic to the hypersurface (Theorem~\ref{T.finite}). {\em Corollary~\ref{C.algebraic}}\/ follows from the latter theorem and Example~\ref{E.algebraic}.

\noindent{\em Stage 4: Noncompact level sets of infinite topological
  type.} In Section~\ref{S.infinite} we tackle the case of noncompact
hypersurfaces of infinite topological type
(Theorem~\ref{T.infinite}). To do so, we introduce the notion of periodic tentacled hypersurface (Definition~\ref{D.infinite}), discussing the associated discrete Euclidean symmetries that, together with the asymptotic Euclidean symmetries discussed above, play a key role in the proof of the theorem. {\em Theorem~\ref{T.1}}\/ then follows from Theorems~\ref{T.finite} and~\ref{T.infinite}; a further generalization is presented in Remark~\ref{R.monsters}.\smallskip

Since we have also considered level sets of the equation $(\De-k^2)u=0$, which can be compact for $k\neq0$, for completeness it is worth discussing the case of compact level sets. This is done in Appendix~\ref{S.appendix}, where we show that for any compact hypersurface there exists a global solution having a level set diffeomorphic to it (Theorem~\ref{T.appendix}). The proof is considerably less involved as one does not have to deal with the lack of compactness present in Theorems~\ref{T.1} and~\ref{T.2}, and actually applies as well to more general equations.

\subsection{Joint level sets}

We shall next deal with joint level sets of $m\geq2$ harmonic functions, that is, with the intersection of their level sets. As we did in the case of a single level set, we will be interested in the case of {regular} level sets, which amounts to requiring that this intersection be regular. This leads to the consideration of {\em transverse}\/ intersections of level sets $u_1^{-1}(c_1)\cap \cdots \cap u_m^{-1}(c_m)$, which means that the gradients of $u_1,\dots,u_m$ are linearly independent at each point of the joint level set. It is well known that the topological obstruction on the submanifolds that can be the transverse intersection of $m$ functions is that they must have codimension $m$ and trivial normal bundle.

Let us fist consider the case where the joint level set is compact, or, more generally, a locally finite disjoint union of compact submanifolds of codimension $m$ (with trivial normal bundle, by the above argument). The strategy we will follow to construct $m$ harmonic functions with a joint level set diffeomorphic to a given compact submanifold $L$ is similar to the one used for a single level set: we first construct $m$ local harmonic functions having a joint level set diffeomorphic to $L$ and then approximate them by global harmonic functions, using a topological stability theorem to guarantee that they have a joint level set that is also diffeomorphic to $L$.

To construct the local solutions, we make use of the triviality of the normal bundle of $L$ to characterize the submanifold as the transverse intersection of $m$ hypersurfaces, which can be chosen real analytic by perturbing $L$ a little if necessary. An easy application of the Cauchy--Kowalewski theorem then gives the desired local harmonic functions. The global approximation theorem we shall use is a better-than-uniform approximation result that we prove using an iterative scheme based on the Lax--Malgrange theorem. The key point that allows to simplify the treatment (and to consider more general elliptic PDEs, as in the statement of Theorem~\ref{T.3}) is that the compactness of each component of $L$ makes the verification of the topological stability conditions much less subtle than in the noncompact case; in particular, this is the reason why the above Cauchy--Kowalewski argument  is successful and topological stability can been tackled using only the classical Thom isotopy theorem.

The case of joint level sets with noncompact components is more involved and, as in the case of a single level set, requires some fine control on the geometry of the submanifold at infinity. Therefore, we introduce the notion of a codimension-$m$ tentacled submanifold of $\RR^n$, which is the proper analog of a tentacled hypersurface and is indeed given by a transverse intersection of $m$ such hypersurfaces. This automatically grants that the normal bundle of the submanifold is trivial.

The strategy now is to write the codimension-$m$ submanifold as the intersection of $m$ tentacled hypersurfaces of $\RR^n$. One can then apply the same reasoning we used in the proof of Theorem~\ref{T.1} to each of the aforementioned tentacled hypersurfaces, obtaining $m$ local solutions of the equations that have a level set diffeomorphic to one of the above tentacled hypersurfaces and satisfy the conditions of the noncompact stability theorem. Suitable control of the second-order derivatives of these local solutions allow us to proceed as in the case of a single level set using a global approximation theorem.

Our treatment of joint level sets, which we present in Section~\ref{S.compact},  will therefore consist of two parts:\smallskip

\noindent{\em Part 1: Compact joint level sets.} In Subsection~\ref{SS.1} we will prove that, given any locally finite disjoint union $L$ of compact submanifolds in $\RR^n$ of codimension $m$ and trivial normal bundle and a collection of real analytic elliptic differential operators of second order $T_r$, there are $m$ solutions of the equations $T_ru_r=0$ in $\RR^n$ such that $L$ is diffeomorphic to a joint level set $u_1^{-1}(c_1)\cap \cdots\cap u_m^{-1}(c_m)$, thereby establishing {\em Theorem~\ref{T.3}}. A key step in the proof is a better-than-uniform approximation theorem for these differential operators (Lemma~\ref{L.LM}).

\noindent{\em Part 2: Noncompact joint level sets.} In Subsection~\ref{SS.2} we will define the notion of a codimension-$m$ tentacled submanifold (Definition~\ref{D.joint}) and tackle the case of joint level sets with noncompact components, proving {\em Theorem~\ref{T.2}}.

\subsection*{Notation}

To conclude this section, let us present some notation that will be employed throughout the article. We will use the notation
\[
\|f\|_{C^p(U)}:=\max_{|\al|\leq p}\sup_{x\in U} |D^\al f(x)|
\]
for the $C^p$ norm of a map $f$ in a set $U\subset\RR^n$, and denote by $B(x,\rho)$ the ball of radius $\rho$ centered at a point $x\in\RR^n$. We will always assume that $n\geq3$. If $\Om$ is a (possibly unbounded) domain of $\RR^n$, we will denote by
\[
\la_\Om:=\inf\bigg\{\int |\nabla v|^2\,dx: v\in C^\infty_0(\Om),\; \int v^2\,dx=1\bigg\}
\]
the infimum of the $L^2$ spectrum of $-\De$ in the domain $\Om$ with Dirichlet boundary conditions.

As customary, we will say that a function satisfies a PDE in a closed
set $S$ if the PDE holds in some open set containing $S$. All the
diffeomorphisms appearing in this article are assumed $C^\infty$ and
orientation-preserving without further mention, and all the
submanifolds of $\RR^n$ that we will consider are $C^\infty$,
oriented, properly embedded and without boundary. We will say a
submanifold of $\RR^n$ has codimension $m$ if all its connected
components do.

\section{Noncompact stability theorem}
\label{S.isotopy}

In forthcoming sections, it will be essential to ensure the stability under small perturbations of certain level sets of various maps. When these level sets are compact and regular, this is granted by Thom's isotopy theorem~\cite[Theorem 20.2]{AR67}, but noncompact regular level sets are generally not stable.

Our goal in this section is to prove the following $C^p$ stability theorem for noncompact level sets, which is of separate interest and allows us to deal with uniform perturbations of the type considered in Section~\ref{S.approx}. Our proof is based on totally different ideas from those of Thom and has the crucial advantage of being explicit enough to allow for a fine control of the diffeomorphism at infinity. In our proof, the diffeomorphism is constructed essentially as the time-$1$ flow of a carefully chosen vector field, whose components are checked to be suitably small:

\begin{theorem}\label{T.stab}
  Let $U$ be a domain in $\RR^n$ and let $f:U\to\RR^m$ be a $C^\infty$ map, with $m<n$. Consider a (possibly unbounded) connected component $L$ of the zero set $f^{-1}(0)$ and suppose that:
\begin{enumerate}
\item There exists a domain $V$ whose closure is contained in $U$ and such that the component of $f^{-1}(B(0,\eta))$ connected with $L$ is contained in $V$ for some $\eta>0$.
\item The gradients of the components $f_r$ of the map $f$ satisfy the condition
\[
\inf_{x\in V}|\om_1\nabla f_1(x)+ \cdots+\om_m\nabla f_m(x)|>0
\]
for all vectors $\om\in\RR^m$ of unit norm.
\end{enumerate}
Then, given any $\ep>0$, there exists some $\de>0$ such that for any smooth function $g:U\to\RR^m$ with
\begin{equation}\label{supV}
\| f- g\|_{C^1(V)}<\de
\end{equation}
one can transform $L$ by a diffeomorphism $\Phi$ of $\RR^n$ so that $\Phi(L)$ is the intersection of the zero set $g^{-1}(0)$ with $V$. The diffeomorphism $\Phi$ only differs from the identity in a proper subset of $V$ and satisfies $\|\Phi-\id\|_{C^0(\RR^n)}<\ep$.

Moreover, if the $C^{p}$ norm $\|f-g\|_{C^{p}(V)}$ is small enough and the first $p+1$ derivatives of the maps $f$ and $g$ are bounded (i.e., $\|f\|_{C^{p+1}(V)}$ and  $\|g\|_{C^{p+1}(V)}$ are finite), then the diffeomorphism is $C^p$-close to the identity: $\|\Phi-\id\|_{C^p(\RR^n)}<\ep$.
\end{theorem}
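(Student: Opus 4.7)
The plan is to construct $\Phi$ as the time-one map of the flow of a carefully cut-off time-dependent vector field, adapting the Moser/moving-level-set trick to the noncompact setting. Consider the linear homotopy $F_t:=(1-t)f+tg$ for $t\in[0,1]$, so $F_0=f$ and $F_1=g$, and look for a time-dependent vector field $X_t$ on $V$ satisfying
\begin{equation*}
dF_t(x)\cdot X_t(x)=f(x)-g(x),\qquad x\in V,\ t\in[0,1],
\end{equation*}
because then its flow $\Phi_t$ obeys $\tfrac{d}{dt}F_t(\Phi_t(x))\equiv 0$, so $g(\Phi_1(x))=f(x)$ and hence $\Phi_1(L)\subset g^{-1}(0)$. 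To produce $X_t$ I take it in the span of the gradients, $X_t=\sum_r a_r\nabla F_{t,r}$; the equation then reduces to a linear system $G(x,t)\,a=f-g$ with $G$ the Gram matrix $G_{rs}:=\nabla F_{t,r}\cdot\nabla F_{t,s}$. Hypothesis~(ii) says the smallest eigenvalue of $G|_{t=0}$ is bounded below on $V$ by some $c_0>0$; for $\|f-g\|_{C^1(V)}<\de$ with $\de$ small this persists uniformly in $t\in[0,1]$, and $a=G^{-1}(f-g)$ gives a uniform bound $|X_t(x)|\leq C\|f-g\|_{C^0(V)}$ on $V$.

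Next I extend $X_t$ to a smooth, bounded, globally defined vector field $Y_t$ by means of a cutoff. Let $W$ be the connected component of $f^{-1}(B(0,\eta))$ containing $L$, which lies in $V$ by hypothesis~(i). Pick any open neighborhood $N$ of $L$ with $\overline N\subset W$ and any $\chi\in C^\infty(\RR^n,[0,1])$ with $\chi\equiv 1$ on $N$ and $\supp\chi\subset W$, and set $Y_t:=\chi X_t$, extended by zero. Since $|Y_t|\leq C\de$ uniformly on $\RR^n$ and $Y_t$ is smooth, its flow $\Phi_t$ exists globally for $t\in[0,1]$ and equals the identity outside $W\subset V$; so $\Phi:=\Phi_1$ is a smooth diffeomorphism of $\RR^n$ differing from $\id$ only on a proper subset of $V$. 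For $\de$ small every trajectory starting in $L\subset N$ remains in $N$ throughout $[0,1]$, where $\chi\equiv 1$, and therefore $F_t(\Phi_t(x))=f(x)=0$ for $x\in L$, giving $\Phi(L)\subset g^{-1}(0)\cap V$. The reverse inclusion follows by running the same construction with the time-reversed vector field $-\chi X_{1-t}$, using that for $\de<\eta/2$ every zero of $g$ in $V$ lies in $N$ (since $|f|\geq\eta$ on $V\setminus W$ forces $|g|\geq\eta/2$ there).

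The $C^0$ bound is immediate: $\Phi(x)-x=\int_0^1 Y_t(\Phi_t(x))\,dt$, so $\|\Phi-\id\|_{C^0(\RR^n)}\leq C\de<\ep$ if $\de<\ep/C$. For the higher-order bound, differentiating $\pd_t\Phi_t=Y_t\circ\Phi_t$ in $x$ repeatedly yields linear ODEs for the derivatives of $\Phi_t$ whose forcing and coefficients are polynomial in $D^jY_t$ for $j\leq p$, and thus in the first $p+1$ derivatives of $f$ and $g$ (the extra derivative arises when differentiating $G^{-1}$). A Gronwall argument in $t$, using the assumed bounds on $\|f\|_{C^{p+1}(V)}$ and $\|g\|_{C^{p+1}(V)}$, then yields $\|\Phi-\id\|_{C^p(\RR^n)}\leq C_p\|f-g\|_{C^p(V)}$; taking $\de<\ep/C_p$ finishes the proof.

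The main obstacle I anticipate is the noncompactness of $L$ and $V$: there is no compactness available to guarantee either global existence of the flow or $C^0$-stability of the level set. Hypothesis~(ii) is precisely what rescues the argument, because the uniform lower bound on the Gram matrix $G$ converts the $C^1$ smallness of $f-g$ into a global pointwise bound on $|X_t|$, after which the cutoff-and-flow construction mimics the compact case of Thom's theorem. A minor subtlety is to check that $g$ does not develop new zero components in $V\setminus W$; this follows from the pointwise lower bound on $|f|$ outside $W$ together with $\de<\eta/2$.
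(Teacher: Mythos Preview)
Your approach is essentially the paper's: both linearly interpolate between $f$ and $g$, solve for a vector field in the span of the gradients via the Gram matrix (whose uniform invertibility on $V$ is precisely hypothesis (ii)), cut it off, and take the time-one flow. The paper packages this as an autonomous field $\partial_t+X$ on $I\times U$ with an $(m+1)\times(m+1)$ matrix, while you use a time-dependent field on $U$ with an $m\times m$ Gram matrix; the content is the same.

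Two steps need tightening. First, you write ``pick any $N$'' and then claim that every zero of $g$ in $V$ lies in $N$; your parenthetical only shows such zeros lie in $W$, not in an arbitrary $N\subset W$. Take instead $N$ to be the component of $\{|f|<\eta/2\}$ containing $L$: then for $\de<\eta/4$ any zero of $g$ in $W$ has $|f|<\de<\eta/2$ and hence lies in $N$. Second, the assertion ``for $\de$ small every trajectory starting in $L$ remains in $N$'' is not a mere displacement bound (no uniform $\dist(L,\partial N)>0$ is available in the noncompact setting); it requires the bootstrap the paper makes explicit: while the trajectory is where $\chi=1$ one has $F_t(\Phi_t(x))\equiv 0$, hence $|f(\Phi_t(x))|=t|f-g|(\Phi_t(x))<\de<\eta/2$, so the trajectory cannot exit $N$. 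The same bootstrap handles the reverse inclusion.

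For the $C^p$ statement you also need $\chi$ with bounded derivatives up to order $p$, which is not automatic on a noncompact $W$. The paper notes that the assumed bound on $\|f\|_{C^{p+1}}$ gives $\dist(N,\partial W)\geq(\eta/2)/\|\nabla f\|_{C^0}>0$, so such a $\chi$ can be built; you should record this.
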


Before presenting the proof of the theorem, a comment on its hypotheses is in order. Condition~(i) asserts that $V$ contains a neighborhood of the submanifold $L$ saturated by the map $f$. Condition~(ii) imposes a gradient bound on the function $f$ in the set $V$, and was also used by Rabier in his extension of Ehresmann's fibration theorem~\cite{Ra97} (notice, however, that the ideas we use in our proof are not related to Rabier's). When $m=1$, this condition simply asserts that $|\nabla f|\geq C>0$ in $V$, while when $m>1$ this condition measures to what extent the gradients of the components of the map $f=(f_1,\dots,f_m)$ remain linearly independent in the set $V$. 

\begin{proof}[Proof of Theorem~\ref{T.stab}]
  By~(i) we can assume that the open set $V$ is exactly $f^{-1}(B(0,\eta))$ and $U$ is a small neighborhood of the closure of $ V$ where condition~(ii) and~\eqref{supV} still hold. Let us take an open interval $I\supset[0,1]$ and define the map $h:I× U\to \RR× \RR^m$ as
  \begin{equation}\label{htx}
h(t,x):=\big(t,(1-t)f(x)+t\,g(x)\big)\,.
\end{equation}
Thus $h(t,x)$ is an auxiliary map which connects the functions $f$ and $g$ in the sense that $h(0,x)=(0,f(x))$ and $h(1,x)=(1,g(x))$.

Let us take an arbitrary point $x$ in $U$ and denote by $F$ the $m× n$ matrix $Df(x)$, with components $F_{ij}:=\pd_j f_i(x)$. Denoting by  $F^*$ the transpose matrix of $F$, by the condition~(ii) it is clear that the lower bound
\begin{equation}\label{F*}
|F^*\om|= \big|\om_1\nabla f_1(x)+\cdots+\om_m\nabla f_m(x)\big|\geq c_1|\om|
\end{equation}
holds for all vectors $\om\in\RR^m$, where $c_1$ is a positive constant independent of the point $x\in U$.

Our goal now is to show that the derivative of the map $h(t,x)$ is uniformly bounded from below. Consider the $(m+1)× (n+1)$ matrix $H:=\bar D h(t,x)$, which we write as $H=H_0+A$, with
\[
H_0:=\bigg(\begin{matrix}
  1 & 0\\
  0 & F
\end{matrix}\bigg)\,,\qquad A:=\bigg(\begin{matrix}
  0 & 0\\
  g(x)-f(x) & t\,(Dg(x)-F)
\end{matrix}\bigg)\,.
\]
Here we are using the notation $\bar D, \bar \nabla$ for the derivatives with respect to the variables $t$ and $x$, as opposed to the derivatives $D,\nabla$ with respect to $x$. Setting $\|A\|:=\sup_{|v|=1}|Av|$, it is clear that $\|A\|\leq C\de$ by the $C^1$ estimate~\eqref{supV}. Since
\[
|H^*v|\geq \big||H_0^*v|-|A^*v|\big|
\]
and $|A^*v|\leq\|A\||v|\leq C\de|v|$, it stems from the inequality~\eqref{F*} that
\begin{equation}\label{H*}
|H^*v|\geq \big(\!\min\{1,c_1\}-C\de\big)|v|\geq c_2|v|
\end{equation}
for small enough $\de$ and all $v\in\RR×\RR^m$, $(t,x)\in I× U$. Here $c_2$ is a positive constant independent of $t$ and $x$. Therefore, the kernel of $H^*$ is trivial, so $H$ is onto and thus the map $h$ (resp.\ $f$) is submersive in $I× U$ (resp.\ $U$).

Let us denote the components of the map $h$ as $(h_0,h_1,\dots,h_m)$, with $h_0(t,x)=t$ and $h_i(t,x)=(1-t)\,f_i(x)+t\,g_i(x)$ for $1\leq i\leq m$. The condition~\eqref{H*} ensures that the self-adjoint $(m+1)× (m+1)$ matrix $HH^*$ satisfies $HH^*\geq c_2^2$, so that its inverse $B:=(HH^*)^{-1}$ is positive definite and satisfies the inequality $B\leq c_2^{-2}$ for all $(t,x)\in I× U$. We will denote by $(B_{\mu\nu})_{\mu,\nu=0}^m$ the matrix elements of $B$.

In order to construct the desired diffeomorphism, for each integer $\mu=0,1,\dots,m$ it is convenient to consider the vector field $S_\mu$ defined by
\begin{equation}\label{Smu}
S_\mu:=\sum_{\la=0}^m B_{\mu\la}\,\bar \nabla h_\la(t,x)
\end{equation}
in $I× U$. By definition, the scalar product (in $\RR^{n+1}$) of these vector fields with the space-time gradient of the function $h_\nu$ is given by
\begin{align}\label{Smuhnu}
  S_\mu\cdot\bar \nabla h_\nu=\sum_{\la=0}^m B_{\mu\la} \,\bar \nabla h_\la\cdot \bar \nabla h_\nu= \sum_{\la=0}^m\sum_{\be=0}^n B_{\mu\la} H_{\la \be} H_{\nu \be}=\de_{\mu\nu}\,,
\end{align}
with $\de_{\mu\nu}$ standing for Kronecker's delta. Hence these vector fields define a parallelism; in particular, each field $S_\mu$ does not vanish in $I× U$. Besides, denoting by $(S_{\mu 0},S_{\mu 1},\dots, S_{\mu n})$ the components of $S_\mu$, one can readily check that the component $S_{\mu \be}$ coincides with the matrix element $(BH)_{\mu \be}$, so that the norm of these vector fields is bounded in $I× U$ by
\begin{equation}\label{Smu}
|S_\mu|^2=\sum_{\be=0}^n S_{\mu \be}^2= B_{\mu\mu}\leq c_2^{-2}\,.
\end{equation}

Choosing a small enough positive constant $\de<\frac\eta{4}$ and using the condition~\eqref{supV}, we can ensure that $g^{-1}(0)$ is contained in the proper subset $W:=f^{-1}(B(0,\frac\eta2))$ of $U$. Let us take a $C^\infty$ function $\chi:I× U\to[0,1]$ which is equal to $1$ in $[0,1]× W$ and is supported in $I× U$. 

From Eq.~\eqref{Smuhnu} with indices $\mu=\nu=0$ it stems that one can decompose the vector field $S_0$ as
\begin{equation}\label{eequis}
S_0=:\pd_t+X\,,
\end{equation}
where $X$ is a vector field in $I× U$ orthogonal to the coordinate vector field $\pd_t$. Let us now consider the vector field $\hat S_0:=\pd_t+\chi\,X$, whose time-$1$ flow will essentially yield the desired diffeomorphism. Since $|\hat S_0|\leq|S_0|$ is bounded in $I× U$ by the estimate~\eqref{Smu} and coincides with the coordinate field $\pd_t$ in a neighborhood of the boundary $\pd(I× U)$, it follows that the vector field $\hat S_0$ defines a $C^\infty$ local flow $\vp_s$ in $I× U$. Let us write this local flow as
\begin{equation}\label{hPhi}
\vp_s(t,x)=:\big(t+s,\hPhi(s,t,x)\big)
\end{equation}
and notice that, for any $x\in U$, $\vp_s(0,x)$ is well defined at least for $s\in[0,1]$.

Let us define the codimension-$(m+1)$ submanifolds
\[
L_0:=\{0\}× f^{-1}(0)\,,\qquad L_1:=\{1\}× g^{-1}(0)
\]
of the set $I× W$. Clearly $L_0=h^{-1}(0,0)$ and $L_1=h^{-1}(1,0)$. We claim that the time-$s$ flow $\vp_s(L_0)$ of $L_0$ is contained in the time slice $\{s\}× W$ for all $s\in[0,1]$. In order to see this, let us take $(0,x_0)\in L_0$ and use the notation $(s,x(s)):=\vp_s(0,x_0)$. As $S_0\cdot \bar\nabla h_i=0$ for $1\leq i\leq m$ by Eq.~\eqref{Smuhnu}, one finds that
\begin{align*}
  \bigg|\frac{d}{ds}h_i(s,x(s))\bigg|&=(1-\chi)\big|X\cdot \nabla h_i(s,x(s))\big| = (1-\chi)\bigg| \frac{\pd h_i}{\pd t}(s,x(s))\bigg|\\
  &=(1-\chi)\big|f_i(x(s))-g_i(x(s))\big|<\de
\end{align*}
by the estimate~\eqref{supV}, so that $|h_i(s,x(s))|<\de$ for all $s\in[0,1]$. In turn, this yields the bound
\[
|f_i(x(s))|<2 \de
\]
for all $s\in[0,1]$ and $1\leq i\leq m$, so that the time-$s$ flow  $\vp_s(L_0)$ is contained in $[0,1]× W$ for $s\in[0,1]$. Since $\chi=1$ in $[0,1]× W$, actually $h_i(s,x(s))=h_i(0,x_0)=0$, which implies that $\vp_1(L_0)\subset L_1$. By reversing the argument one infers that $\vp_{-1}(L_1)\subset L_0$, so that $\vp_1(L_0)=L_1$, as claimed.

Let us now consider the map $\Phi_1(x):=\hPhi(1,0,x)$ (see Eq.~\eqref{hPhi}), which is a diffeomorphism of $U$ by construction. As the map $\Phi_1:U\to U$ so defined is then equal to the identity in a neighborhood of $\pd U$, it can be trivially extended to a $C^\infty$ diffeomorphism $\Phi$ of $\RR^n$ by setting
\[
\Phi(x):=\begin{cases}
  \Phi_1(x)\,, & x\in U\,,\\
  x\,, &x\not\in U\,.
\end{cases}
\]
By construction, $\Phi(L)$ coincides with $g^{-1}(0)\cap V$.

To complete the proof of the theorem, it only remains to show that the diffeomorphism $\Phi$ is  close to the identity. Since $|\chi|\leq 1$ and $\Phi_1$ is defined in terms of the local flow~\eqref{hPhi}, in order to show that the norm $\|\Phi-\id\|_{C^0(\RR^n)}$ is small it suffices to prove that $\|X\|_{C^0(I× U)}$ is close to zero, where $X$ is the vector field defined in Eq.~\eqref{eequis}. Since the space-time gradient of $h_0$ is $\pd_t$, it follows from the definition of $X$ and Eq.~\eqref{Smuhnu} that $X$ can be written as a linear combination of the gradients of $h_\mu$:
\[
X=\sum_{\mu=0}^m v_\mu\,\bar \nabla h_\mu\,.
\]
Here the function $v_\mu=v_\mu(t,x)$ depends on the variables $t$ and $x$. Notice, moreover, that
\[
X\cdot \bar \nabla h_\mu=S_0\cdot \bar \nabla h_\mu-\frac{\pd h_\mu}{\pd t}=\begin{cases}0\,, & \mu=0\,,\\
f_\mu-g_\mu\,, & \mu\in\{1,\dots, m\}
\end{cases}
\]
by Eq.~\eqref{Smuhnu}. Hence the norm of the vector field $X$ satisfies
\begin{align*}
|X|&= \frac{|X\cdot\sum_{\mu=0}^m v_\mu\,\bar \nabla h_\mu|}{|\sum_{\mu=0}^m v_\mu\,\bar \nabla h_\mu|}\leq\frac{\sum_{i=1}^m |v_i|\,|f_i-g_i|}{|H^*v|}\leq \frac{m|v|\sup_U|f-g|}{c_2|v|}<\frac{m\de}{c_2}
\end{align*}
by Eqs.~\eqref{supV} and~\eqref{H*}, thereby proving the smallness of the $C^0$ norm $\|\Phi-\id\|_{C^0(\RR^n)}$.

To estimate the $C^p$ norm of $\Phi-\id$, one argues as in the case of the $C^0$ norm but one needs to estimate the $C^p$ norm of the vector field $\chi X$. First of all, it should be noticed that the hypothesis that the derivatives of $f$ are bounded ensures that the set $V$ contains a `metric tubular neighborhood'
\[
\big\{ x\in\RR^n: \dist(x,L)<\rho\big\}
\]
of the submanifold $L$, where $\rho$ is some positive constant. This obviously allows us to assume that all the derivatives of the function $\chi$ are bounded in $\RR^n$. After a straightforward but tedious computation, one finds that the $C^p$ norm of the vector field $X$ can be controlled as
\[
|D^p X|\leq C(\|f\|_{C^{p+1}},\|g\|_{C^{p+1}}, \|\chi\|_{C^{p}})\,\|f-g\|_{C^p}
\]
in the set $I× U$, where $ C(\|f\|_{C^{p+1}},\|g\|_{C^{p+1}}, \|\chi\|_{C^{p}})$ is a constant depending on the norms of $f$, $g$ and $\chi$. The theorem then follows.
\end{proof}

\begin{remark}\label{R.Thom}
  When the submanifold $L$ is compact, all the derivatives of the maps $f$ and $g$ are necessarily finite in a suitable neighborhood of $L$, so the $C^p$ condition $\|f-g\|_{C^p(V)}<\de$ automatically yields a diffeomorphism $\Phi$ that is close to the identity in the $C^p$ norm (as was proved in Thom's isotopy theorem).
\end{remark}

As a matter of fact, it is worth pointing out that the above proof does not only demonstrate Theorem~\ref{T.stab}, but also the following closely related result, in which the component $L$ of the zero set of the function is allowed to lie on the boundary of the set $V$. Although this is only a minor modification, we will find this result of use in forthcoming sections:

\begin{corollary}\label{C.Thom}
Let $U$ be a domain in $\RR^n$ and let $f:U\to\RR^m$ be a $C^\infty$ map, with $m<n$. Consider a (possibly unbounded) connected component $L$ of $f^{-1}(0)$. Suppose that:
\begin{enumerate}
\item There exists a domain $V$ whose closure is contained in $U$ and such that the intersection $f^{-1}(0)\cap \overline V$ is $L$.
\item The gradients of the components $f_r$ of the map $f$ satisfy the condition
  \[
  \inf_V|\om_1\nabla f_1+\cdots+ \om_m\nabla f_m|>0
\]
  for all unit vectors $\om\in\RR^m$.
\end{enumerate}
Then, given any $\ep>0$ there exists some $\de>0$ such that for any smooth function $g:U\to\RR^m$ with $\|
f- g\|_{C^1(V)}<\de$
one can transform $L$ by a diffeomorphism $\Phi$ of $\RR^n$ so that $\Phi(L)$ is the intersection of the zero set  $g^{-1}(0)$ with the closure of $V$ provided that any component of the set
\begin{equation}\label{sett}
\big\{x\in U:(1-t)\,f(x)+t\,g(x)=0\big\}
\end{equation}
that intersects $\overline V$ is contained in $\overline V$ for all $t\in[0,1]$. Besides $\Phi-\id$ is supported in a proper subset of any fixed neighborhood of $\overline V$ and $\|\Phi-\id\|_{C^0(\RR^n)}<\ep$. If moreover the $C^p$ norm $\|f-g\|_{C^p(V)}$ is small enough and the first $p+1$ derivatives of the maps $f$ and $g$ are bounded in $V$, then one can take the diffeomorphism $\Phi$ $C^p$-close to the identity.
\end{corollary}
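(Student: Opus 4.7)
The plan is to re-run the proof of Theorem~\ref{T.stab} essentially verbatim, changing only the choice of the cutoff $\chi$ and the argument that the flow stays where we want it. I would keep the same auxiliary map $h(t,x):=(t,(1-t)f(x)+t\,g(x))$ on $I\times U$; the matrix computation leading to the submersion estimate $|H^*v|\geq c_2|v|$ for small $\de$ is unchanged, as are the construction of the parallelism $S_\mu$, the bound $|S_\mu|\leq c_2^{-1}$, and the decomposition $S_0=\pd_t+X$. Once $\hat S_0:=\pd_t+\chi X$ is defined, its time-$1$ flow will be the candidate diffeomorphism.

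The difference from Theorem~\ref{T.stab} is that $V$ is no longer a saturated neighborhood of $L$, so other zeros of $f$ (or of the interpolants) may lie arbitrarily close to $\overline V$. To accommodate the ``fixed neighborhood of $\overline V$'' clause in the statement, I would fix a small open $V'$ with $\overline V\subset V'\subset\overline{V'}\subset U$ contained in this prescribed neighborhood, and pick $\chi\in C^\infty(I\times U,[0,1])$ that equals $1$ on $I\times\overline V$ and is supported in $I\times V'$. Outside $I\times V'$ the vector field $\hat S_0$ reduces to $\pd_t$, so $\Phi-\id$ is supported in $V'$.

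The main step, and the chief obstacle, is to show that any integral curve $(s,x(s)):=\vp_s(0,x_0)$ with $x_0\in L$ stays in $\{s\}\times\overline V$ for all $s\in[0,1]$, for this is what replaces the saturation argument of Theorem~\ref{T.stab}. As long as the curve remains in $\overline V$ one has $\chi=1$ along it, and the identity $\frac{d}{ds}h_i(s,x(s))=S_0\cdot\bar\nabla h_i=0$ forces $x(s)$ to lie in the interpolating zero set \eqref{sett} at parameter $s$. Set $\tau:=\sup\{s\in[0,1]:x(s')\in\overline V\text{ for all }s'\leq s\}$ and suppose for contradiction that $\tau<1$. By continuity $x(\tau)\in\overline V$ and $x(\tau)$ lies in the zero set at level $\tau$; the hypothesis on \eqref{sett} then tells us that the whole connected component $N_\tau$ of this zero set through $x(\tau)$ is contained in $\overline V$. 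Now the full set $\mathcal Z:=h^{-1}(I\times\{0\})$ is a smooth $(n{-}m{+}1)$-submanifold of $I\times U$ by the submersion estimate, and the projection $\pi_t:\mathcal Z\to I$ is itself a submersion (since $dh_0$ is independent of $dh_1,\dots,dh_m$ by~\eqref{F*}); therefore, in a neighborhood of $\{\tau\}\times N_\tau$ the projection $\pi_t$ is locally trivial, so for $s$ slightly larger than $\tau$ the slice $\mathcal Z_s$ near $x(\tau)$ is a small deformation of $N_\tau$ and stays inside the neighborhood $V'$ of $\overline V$. The continuous trajectory $x(s)$ is then confined to this local product and, by continuity from $x(\tau)\in N_\tau\subset\overline V$, its component in $\mathcal Z_s$ meets $\overline V$, forcing the whole component into $\overline V$ by hypothesis. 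Hence $x(s)\in\overline V$, contradicting the definition of $\tau$.

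Once confinement is proved, the identity $\vp_1(\{0\}\times L)=\{1\}\times(g^{-1}(0)\cap\overline V)$ follows exactly as in Theorem~\ref{T.stab}, with the reverse inclusion coming from running the flow backwards and applying the same argument to $\vp_{-1}$. Extending $\Phi_1:=\hPhi(1,0,\cdot)$ by the identity outside $V'$ produces the desired smooth diffeomorphism of $\RR^n$, and the $C^0$ estimate $\|\Phi-\id\|_{C^0}\leq m\de/c_2$ together with the $C^p$ estimate $|D^pX|\leq C(\|f\|_{C^{p+1}},\|g\|_{C^{p+1}},\|\chi\|_{C^p})\|f-g\|_{C^p}$ is identical to the one in Theorem~\ref{T.stab}. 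The only genuinely new ingredient is the connectedness-plus-local-triviality argument in the preceding paragraph, which is precisely the role played by the hypothesis on \eqref{sett}; without it the trajectory could slide along a component of the interpolating zero set that crosses $\pd V$, and the conclusion would fail.
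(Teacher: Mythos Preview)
Your proposal is correct and matches the paper's approach: the paper gives no separate proof of the corollary, merely remarking that ``the above proof does not only demonstrate Theorem~\ref{T.stab}, but also the following closely related result,'' so re-running the flow construction with a cutoff supported near $\overline V$ and replacing the saturation argument by the hypothesis on the interpolating zero sets~\eqref{sett} is exactly what is intended. Your write-up is in fact more explicit than the paper's, since you spell out the confinement argument (the $\sup$-of-times step) that the paper leaves entirely to the reader.
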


\begin{remark}
  In this paper we will only apply this corollary to $g:=f-c$, in which case the sets~\eqref{sett} are simply $f^{-1}(tc)$. Clearly a straightforward modification of Eq.~\eqref{htx} allows to deal with the case where the level sets $f^{-1}(tc)$ do not have the above property but there is a smooth curve $\ga:[0,1]\to B(0,\de)$ with $\ga(0)=0$, $\ga(1)=c$ and such that any component of $f^{-1}(\ga(t))$ meeting $\overline V$ is contained in $\overline V$. An analogous result can also be established for the case of two arbitrary maps $f$ and $g$.
\end{remark}

\section{A global approximation theorem}
\label{S.approx}

Our goal in this section is to establish that any function $v$ satisfying $(\De-k^2) v=0$ in a closed (possibly unbounded) set $S\subset\RR^n$ can be approximated in the $C^p$ norm in this set by a global solution of the latter equation, provided that the complement of the set $S$ does not have any bounded connected components. This condition on $S$ is used to apply the Lax--Malgrange theorem and in an argument on the balayage of poles.

We will prove this result for the equation 
\begin{equation*}
(\De-q) v= 0\qquad \text{in }S\subset\RR^n\,,
\end{equation*}
as the proof for $(\De-k^2)u=0$ is not substantially easier. Here $q$
is a bounded, nonnegative function on
$\RR^n$ which we take real analytic (this condition is
not necessary, as all the arguments in this section can be modified to deal with lower
regularity, but this allows us to avoid inessential technicalities). In order to get a $C^p$ uniform approximation result, we will
assume that the $C^{p-2}$ norm $\|q\|_{C^{p-2}(\RR^n)}$ is finite. The
results we prove in this section generalize theorems of Gauthier,
Goldstein and Ow~\cite{GGO83} and Bagby~\cite{Ba88} to equations that
satisfy a minimum principle but not a maximum principle. 

Throughout the paper, we shall often need the Dirichlet Green's function $G_\Om(x,y)$ of the operator $\De-q$ in an unbounded domain $\Om$. For completeness, in the following proposition we prove the existence of a minimal Dirichlet Green's function using the classical method of compact exhaustions and summarize some properties that will be required later. When $\Om=\RR^n$, we will write $G(x,y)$ instead of $G_{\RR^n}(x,y)$ for the ease of notation.

\begin{proposition}\label{P.Green}
Let $\Om$ be a domain in $\RR^n$, possibly unbounded and with nonempty boundary of class $C^\infty$. Then there exists a minimal positive Dirichlet Green's function 
\[
G_\Om:(\Om× \Om)\minus\diag(\Om× \Om)\to\RR\,,
\]
which satisfies:
\begin{enumerate}
  \item The Green's function is symmetric, that is, $G_\Om(x,y)=G_\Om(y,x)$.
  \item $G_\Om(\cdot,y)\in C^\infty(\BOm\minus\{y\})\cap L^1\loc(\Om)$ and $(\De_x-q(x))G_\Om(x,y)=-\de_y(x)$, with $\de_y$ being the Dirac measure supported at $y$.  
  \item $G_\Om(\cdot,y)=0$ on the boundary $\pd\Om$ and $G_\Om(x,y)\leq C|x-y|^{2-n}$, with $C^{-1}=(n-2)|\SS^{n-1}|$.
\end{enumerate}
\end{proposition}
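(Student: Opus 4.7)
The plan is to construct $G_\Omega$ as the monotone limit of Dirichlet Green's functions on an exhaustion of $\Omega$ by bounded smooth subdomains, using the nonnegativity of $q$ to drive a minimum-principle comparison argument. First I would pick an exhaustion $\Om_1\subset\overline{\Om_1}\cap\Om\subset\Om_2\subset\cdots$ of $\Om$ by bounded domains with $C^\infty$ boundary, chosen so that $\bigcup_k\Om_k=\Om$ and so that on any compact piece of $\pd\Om$ the boundary $\pd\Om_k$ eventually agrees with $\pd\Om$. On each bounded smooth $\Om_k$, the classical Dirichlet theory for the coercive elliptic operator $-\De+q$ (recall $q\geq 0$) yields a symmetric Green's function $G_{\Om_k}(x,y)$ which lies in $C^\infty(\overline{\Om_k}\minus\{y\})$, vanishes on $\pd\Om_k$, and satisfies $(\De_x-q)G_{\Om_k}(\cdot,y)=-\de_y$ distributionally. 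Positivity is a direct consequence of the minimum principle.

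Next I would establish two comparison inequalities via the minimum principle for $-\De+q$. For the monotonicity $G_{\Om_k}\leq G_{\Om_{k+1}}$, fix $y\in\Om_k$ and note that the singularities of the two Green's functions cancel, so the difference $G_{\Om_{k+1}}(\cdot,y)-G_{\Om_k}(\cdot,y)$ solves $(\De-q)u=0$ in $\Om_k$ and is nonnegative on $\pd\Om_k$; hence it is nonnegative throughout. For the upper bound, set $\Ga(x,y):=C|x-y|^{2-n}$ with $C=1/((n-2)|\SS^{n-1}|)$: a direct computation gives $(-\De_x+q)\Ga(x,y)=\de_y+q(x)\Ga(x,y)\geq\de_y$, so $\Ga(\cdot,y)$ is a nonnegative supersolution dominating $G_{\Om_k}(\cdot,y)$ on $\pd\Om_k$, and thus $G_{\Om_k}(x,y)\leq\Ga(x,y)$ in $\Om_k$ by the minimum principle.

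With both inequalities in hand I would define
\[
G_\Om(x,y):=\lim_{k\to\infty}G_{\Om_k}(x,y),
\]
where the limit exists pointwise by monotonicity and is finite by the $\Ga$-bound. The uniform local bound $G_{\Om_k}(\cdot,y)\leq\Ga(\cdot,y)$ on compacts of $\overline\Om\minus\{y\}$, combined with interior and boundary $C^p$ Schauder estimates applied to the harmonic-in-the-elliptic-sense sequence, yields $C^\infty$ convergence on compacts of $\overline\Om\minus\{y\}$. Passing to the limit then gives $G_\Om\in C^\infty(\overline\Om\minus\{y\})$, the distributional identity $(\De_x-q)G_\Om(\cdot,y)=-\de_y$ (the $-\de_y$ contribution comes from the fixed singular part that is independent of $k$, e.g.\ writing $G_{\Om_k}=\Ga-h_{k,y}$ and observing that $h_{k,y}$ is a classical solution whose limit is also classical), the symmetry $G_\Om(x,y)=G_\Om(y,x)$, the pointwise bound in (iii), and vanishing on $\pd\Om$ (from $G_{\Om_k}|_{\pd\Om\cap\pd\Om_k}=0$ together with the compact-convergence statement). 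Finally, minimality of $G_\Om$ is automatic: any other positive function $\tilde G(\cdot,y)$ with the same singularity and nonnegative boundary trace satisfies $\tilde G\geq G_{\Om_k}$ on $\pd\Om_k$ for every $k$, whence $\tilde G\geq G_{\Om_k}$ throughout and $\tilde G\geq G_\Om$ in the limit.

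The main technical point I anticipate is the boundary vanishing of $G_\Om$ on $\pd\Om$, since an arbitrary boundary point is not in any $\Om_k$ and one cannot simply evaluate there. I would handle this by arranging the exhaustion so that each point of $\pd\Om$ eventually lies in $\pd\Om_k$ (hence $G_{\Om_k}(x_0,y)=0$ on that portion), and by invoking the $C^\infty$-up-to-the-boundary convergence coming from Schauder estimates adapted to the straightened boundary chart, with the $\Ga$-bound supplying the uniform control that these estimates require. The symmetry, smoothness, and distributional identity then follow as routine consequences, so the entire argument rests on the nonnegativity of $q$ (for the two minimum-principle comparisons) and the smoothness of $\pd\Om$ (for the boundary Schauder theory).
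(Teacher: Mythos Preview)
Your proposal is correct and follows essentially the same compact-exhaustion/minimum-principle approach as the paper's proof. In fact you are more careful than the paper on two points the authors simply assert: the boundary vanishing on $\pd\Om$ (which you handle by arranging $\pd\Om_k$ to eventually contain each compact piece of $\pd\Om$ and invoking up-to-the-boundary Schauder estimates) and the minimality of $G_\Om$ (which the paper does not spell out at all).
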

\begin{proof}
  If $\Om$ is bounded, the result is well known, so we henceforth assume that $\Om$ is unbounded.  Let $\Om_1\subset\BOm_1\subset \Om_2\subset\cdots\subset\BOm$ be an exhaustion of $\BOm$ by bounded domains with smooth boundaries and let
\[
G_i:(\Om_i× \Om_i)\minus \diag(\Om_i× \Om_i)\to\RR
\]
be the Dirichlet Green's function of the domain $\Om_i$, which is symmetric and satisfies
\[
\big(\De_x-q(x)\big)G_i(x,y)=-\de_y(x)
\]
in $\Om_i$ and $G_i(\cdot,y)|_{\pd\Om_i}=0$.

Since $q\geq0$, the minimum principle ensures that the Green's function $G_i(\cdot,y)$ is positive in $\Om_i$ and that the sequence of Green's functions is monotonically increasing in the sense that
\begin{equation}\label{monotone}
G_i(x,y)< G_{i+1}(x, y)
\end{equation}
for all $x,y\in\Om_i$. Moreover, the minimum principle also guarantees that $G_i$ is bounded by the Green's function of the Laplacian in $\RR^n$ as
\[
G_i(x,y)\leq C|x-y|^{2-n}\,,
\]
with $C^{-1}:=(n-2)|\SS^{n-1}|$. By the monotonicity property~\eqref{monotone}, $\big(G_i(x,y)\big)_{i=1}^\infty$ is then a Cauchy sequence. Standard gradient estimates~\cite{GT98} imply that, for any open sets $B\subset\overline B\subset B'\subset\overline{B'}\subset (\Om_i\cap\Om_j)\minus\{y\}$,
\[
\| G_i(\cdot,y)- G_j(\cdot,y)\|_{C^2(B)}< C'\|G_i(\cdot,y)- G_j(\cdot,y)\|_{C^0(B')}\,,
\]
where $C'$ only depends on  $B$, $B'$ and on the function $q$. Hence $G_i(\cdot,y)$ converges $C^2$-uniformly on compact subsets of $\Om\minus\{y\}$ to a solution $G_\Om(\cdot,y)$ of the equation $\big(\De-q\big)G_\Om(\cdot,y)=-\de_y$. The fact that $G_\Om$ has the properties (i)--(iii) stems from the above construction.
\end{proof}

The proof of the approximation theorem relies on the following three lemmas, whose proofs make use of several ideas of Bagby~\cite{Ba88} and Bagby and Gauthier~\cite{BG88} but rely on a different argument because the differential equation $(\De-q)v=0$ does not satisfy a maximum principle. Lemma~\ref{L.G} estimates the $C^p$ norm of the difference between Dirichlet Green's functions with nearby poles using Schauder estimates (throughout this section, $p$ will denote an arbitrary positive integer):

\begin{lemma}\label{L.G}
Let $V$ be an open subset of $\RR^n$ and let $y$ be a point in $V$. Then for any $\ep>0$ there exists an open neighborhood $B_y\subset V$ of $y$ such that
\[
\|  G(\cdot,y) - G(\cdot,z)\|_{C^p(\RR^n\minus V)}<\ep
\]
for all $z\in B_y$.
\end{lemma}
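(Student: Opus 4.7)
Set $u_z(x) := G(x,y) - G(x,z)$, so that the claim amounts to showing $\|u_z\|_{C^p(\RR^n\minus V)}$ can be made as small as we like by taking $z$ close to $y$. The crucial observation is that, since $y$ and $z$ both lie in $V$, the function $u_z$ solves the homogeneous equation $(\De-q)u_z = 0$ throughout $\RR^n\minus V$ (and in an open neighborhood of this closed set). This lets me bootstrap from a $C^0$ bound to a $C^p$ bound via interior Schauder estimates, and then reduce the $C^0$ bound on a possibly unbounded set to a $C^0$ bound on a compact sphere via the maximum principle.

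The plan is a three-step reduction. First, fix $r>0$ with $\overline{B(y,2r)}\subset V$, and set $V'':=B(y,r)$ and $B_y:=B(y,r/2)$. For any $z\in B_y$, both poles lie in $V''$, so $u_z$ is a smooth solution of $(\De-q)u_z=0$ on $\RR^n\minus\overline{V''}$. Since $\|q\|_{C^{p-2}(\RR^n)}$ is finite by hypothesis, standard interior Schauder estimates applied on balls of fixed radius $r$ centered at points of $\RR^n\minus V$ (whose doubles still sit inside $\RR^n\minus V''$) give
\[
\|u_z\|_{C^p(\RR^n\minus V)}\le C\,\|u_z\|_{C^0(\RR^n\minus V'')},
\]
with $C$ independent of the center and of $z$. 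Second, since $q\geq 0$ the equation $(\De-q)u=0$ obeys a weak maximum principle (an interior positive maximum or negative minimum would contradict $\De u=qu$), and since both terms defining $u_z$ decay as $C|x-\cdot|^{2-n}$ at infinity by Proposition~\ref{P.Green}(iii), I conclude
\[
\|u_z\|_{C^0(\RR^n\minus V'')}\le \sup_{\pd V''}|u_z|.
\]

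Third, for $x$ on the compact sphere $\pd V''$, the function $G(x,\cdot)$ satisfies the homogeneous equation in the $w$-variable on $B(y,3r/4)$, because $|x-w|\geq r/4$ there. Combining the Schauder estimates in the $w$-variable with the pointwise bound $G(x,w)\le C|x-w|^{2-n}\le C(r/4)^{2-n}$ furnishes a constant $C_r$, independent of $x\in\pd V''$, such that $|\nabla_w G(x,w)|\le C_r$ for $w\in B(y,r/2)$. A mean value argument then gives $|u_z(x)|\le C_r|y-z|$ uniformly on $\pd V''$, so shrinking $B_y$ inside $B(y,r/2)$ until $C\,C_r\,\mathrm{diam}(B_y)<\ep$ concludes the argument.

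The main obstacle, and the reason the argument is not entirely routine, is the need to keep every constant uniform in $x$ as $x$ ranges over the possibly unbounded set $\RR^n\minus V$. This is exactly what forces the intermediate compact sphere $\pd V''$ into the strategy: by reducing to it via the maximum principle, the ``far field'' along $\pd V$ is controlled by the decay of $G(x,\cdot)$ rather than by any compactness, while the uniform Schauder constants survive because the coefficient $q$ is assumed to have a globally bounded $C^{p-2}$ norm on $\RR^n$.
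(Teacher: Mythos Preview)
Your proof is correct, but it takes a genuinely different route from the paper's. The paper splits $\RR^n\minus V$ into a far-field region $\RR^n\minus K$ (where the individual terms $D^\al G(\cdot,y)$ and $D^\al G(\cdot,z)$ are each smaller than $\ep/2$ by the decay estimate $|D^\al_x G(x,z)|\le C(|x-z|-\rho)^{2-n}$, uniformly for $z$ in a bounded set) and a compact region $K\minus V$ (where continuity of $D^\al_x G(x,\cdot)$ is invoked directly). You instead treat the difference $u_z$ as a single solution of the homogeneous equation and use the weak maximum principle, together with the decay of $u_z$ at infinity, to collapse the $C^0$ estimate on the whole exterior onto the compact sphere $\pd V''$; then you extract a Lipschitz bound in $z$ via a gradient estimate in the second variable. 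Your approach is a bit more structural and even yields a quantitative rate $\|u_z\|_{C^p(\RR^n\minus V)}\le C|y-z|$, at the price of invoking the maximum principle (which genuinely uses $q\ge 0$) and the symmetry of $G$; the paper's argument is more elementary, needing only the pointwise decay bound from Proposition~\ref{P.Green}(iii) and compactness. One small remark: your parenthetical ``an interior positive maximum \dots would contradict $\De u=qu$'' is not literally a contradiction when $q$ vanishes at the maximum point, but the weak maximum principle you are invoking is of course the standard one for $\De-q$ with $q\ge 0$, so the step is fine.
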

\begin{proof}
  We can assume that $V$ is bounded without loss of generality. Let us arbitrarily fix some real constant $\rho>0$. Since $(\De-q)G(\cdot, z)=0$ in $\RR^n\minus\{z\}$, interior Schauder estimates~\cite[Corollary 6.3]{GT98} show that there exists a constant $C_1$ (depending solely on $n$, $p$, $\rho$ and $\|q\|_{C^{p-2}(\RR^n)}$) such that the $C^p$ pointwise estimate
\begin{equation}\label{DalG}
\max_{|\al|\leq p}\big|D^\al_x G(x, z)\big|\leq C_1\|G(\cdot, z)\|_{C^0(B(x,\rho))}\leq C_2\big(|x-z|-\rho\big)^{2-n}
\end{equation}
holds whenever $|x-z|>\rho$. Here the last inequality follows from the third statement in Proposition~\ref{P.Green}, thereby showing that $D^\al G(\cdot,z)$ tends to zero at infinity for all $|\al|\leq p$. 

By the $C^p$ estimate~\eqref{DalG} and the boundedness of the set $V$, for any $\ep>0$ one can take a compact subset $K\supset V$ of $\RR^n$ such that 
\begin{equation}\label{Brho}
\sup_{z\in V}\sup_{x\in \RR^n\minus K}\big|D^\al_x G(x, z)\big|<\frac\ep2
\end{equation}
for $|\al|\leq p$. Moreover, since $D^\al_x G(x, z)$ depends continuously on $z\in\RR^n\minus\{x\}$ and $K$ is compact, there is a small neighborhood $B_y\subset V$ of $y$ such that 
\[
\sup_{z\in B_y}\sup_{x\in K\minus V}\big|D^\al_x G(x, y)-D^\al_x G(x, z)\big|<\ep\,.
\]
By the definition of the set $K$, the lemma follows.
\end{proof}

In the following lemma we show that the Green's function with pole $y$ can be approximated in a suitable sense by finite linear combinations of Green's functions with poles in a prescribed set. The proof is based on density and duality arguments:

\begin{lemma}\label{L.approx}
Given an open set $W\subset \RR^n$, let us take a compact subset $K\subset W$ of nonempty interior and a point $y\in W$. Then, for any $\ep>0$, there exist a finite set $\cP\subset K$ and constants $(a_z)_{z\in \cP}\subset\RR$ such that
\[
\bigg\|G(\cdot,y)-\sum_{z\in\cP}a_z\,G(\cdot,z)\bigg\|_{C^p(\RR^n\minus W)}<\ep
\]
\end{lemma}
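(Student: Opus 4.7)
The plan is to prove this by the classical Runge-type duality argument combined with unique continuation for the operator $\De-q$. First I would check that each $G(\cdot,z)$ with $z\in K$ actually belongs to $C^p(\RR^n\minus W)$ with norm uniform in $z$: this follows from Proposition~\ref{P.Green}(iii) and the interior Schauder bound~\eqref{DalG} already derived in Lemma~\ref{L.G}, which moreover shows that $D^\al_x G(x,z)\to 0$ as $|x|\to\infty$ uniformly for $z\in K$ and $|\al|\leq p$. The lemma then amounts to saying that $G(\cdot,y)$ lies in the closure in $C^p(\RR^n\minus W)$ of the linear span of $\{G(\cdot,z):z\in K\}$, which by the Hahn--Banach theorem is equivalent to the assertion that every continuous linear functional $\La$ on $C^p(\RR^n\minus W)$ which vanishes on each $G(\cdot,z)$ with $z\in K$ also vanishes on $G(\cdot,y)$.

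Any such $\La$ is represented by integration against a distribution $\mu$ of order at most $p$ supported in $\RR^n\minus W$. Using the uniform decay of $G(\cdot,z)$ and its derivatives noted above, a standard cut-off argument lets us replace $\mu$ by a compactly supported distribution without affecting the pairings of interest. I would then introduce the auxiliary function
\[
u(z):=\langle\mu,G(\cdot,z)\rangle\qquad\text{for }z\in W,
\]
which is well defined since $\supp\mu$ is at positive distance from both $K$ and $y$. Using the symmetry $G(x,z)=G(z,x)$ from Proposition~\ref{P.Green}(i) together with the standard justification for differentiating under a distributional pairing away from the singular support, one verifies that $u\in C^\infty(W)$ and $(\De-q)u=0$ there, because for each fixed $z\in W$ one has $(\De_z-q(z))G(z,x)=0$ for all $x\in\supp\mu$.

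The hypothesis that $\La$ annihilates $G(\cdot,z)$ for every $z\in K$ now reads $u\equiv 0$ on $K$. Since $q$ is real analytic, elliptic regularity forces $u$ to be real analytic in $W$, so its vanishing on the nonempty open set $\inte K$ propagates by the identity principle to the entire connected component of $W$ containing $\inte K$. Passing to this component from the outset (which we may do without loss of generality in the applications of the lemma, in which $y$ is always placed in the same component as the bulk of $K$), we conclude $u(y)=0$, i.e.\ $\La(G(\cdot,y))=0$, as required. The main technical obstacle will be the functional-analytic step of producing a compactly supported distributional representative of the Hahn--Banach functional on the noncompact set $\RR^n\minus W$, which must carefully combine the Riesz-type dual description of $C^p$ with the Schauder decay of $G(\cdot,z)$ at infinity; once past this, the rest is a routine symmetry and unique continuation argument.
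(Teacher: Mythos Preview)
Your strategy---Hahn--Banach duality plus unique continuation for $\De-q$---is exactly the paper's, and your identification of the connectedness caveat (that $y$ must lie in the component of $W$ containing $\inte K$) is also implicit in the paper's argument and harmless in the applications.

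The one substantive difference is tactical, and it dissolves precisely the ``main technical obstacle'' you flag. Rather than working directly in $C^p(\RR^n\minus W)$ and having to describe its dual (which, on an unbounded closed set, is not cleanly a space of order-$p$ distributions, and where your cut-off reduction needs real work), the paper first shrinks $W$ to a proper open $W'\subset W$ with $y,K\subset W'$ and $\rho:=\tfrac12\dist(\pd W,\pd W')>0$, and proves only a $C^0$ approximation on $\RR^n\minus W'$. In $C_0(\RR^n\minus W')$ the dual is honestly given by finite signed Borel measures via Riesz--Markov, so the functional $\La$ is a measure $\mu$ from the outset---no cut-off or distributional representation needed. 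The potential $F(x):=\int G(z,x)\,d\mu(z)$ then satisfies $(\De-q)F=-\mu$, vanishes on $\inte K$, hence on $W'$ by unique continuation, and in particular at $y$. Having obtained $C^0$ approximation on the larger complement $\RR^n\minus W'$, the paper upgrades to $C^p$ on $\RR^n\minus W$ in one line by the interior Schauder estimate~\eqref{DalG}, using the $\rho$-margin between $\pd W$ and $\pd W'$. This $C^0$-then-bootstrap manoeuvre is worth adopting: it buys you the clean Riesz--Markov dual and removes the step you correctly singled out as the delicate one.
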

\begin{proof}
We assume that the point $y$ does not belong to the set $K$, since otherwise the statement is trivial, and consider a proper open subset $W'\subset W$ containing $y$ and $K$, with smooth boundary and such that $\rho:=\frac12\dist(\pd W,\pd W')>0$. (Later on, we will take advantage of this set to transform a $C^0$ estimate into a $C^p$ estimate.) Let $C_0(\RR^n\minus W')$ denote the Banach space of continuous functions on the complement of $W'$ tending to $0$ at infinity, endowed with the supremum norm. We denote by $\cK$ the subspace of $C_0(\RR^n\minus W')$ consisting of all finite linear combinations of $G(\cdot,z)$ with $z\in K$. 

By the generalized Riesz--Markov theorem, the dual of $C_0(\RR^n\minus W')$ is  the space $\cM(\RR^n\minus W')$ of the finite signed Borel measures on $\RR^n$ whose support is contained in the complement of $W'$. Let us take any measure $\mu\in\cM(\RR^n\minus W')$ such that $\int f\,d\mu=0$ for all $f\in \cK$. Let us now define a function $F\in L^1\loc(\RR^n)$ by
\[
F(x):=\int G(z,x)\,d\mu(z)\,,
\]
so that $F$ satisfies the equation $(\De-q)F=-\mu$. Since $F$ is identically zero on the set $K$ by the definition of $\mu$ and $K$ has nonempty interior, the unique continuation theorem ensures that the function $F$ vanishes on $W'$. It then follows that $\mu$ also annihilates $G(\cdot,y)$ because
\[
F(y)=\int G(z,y)\,d\mu(z)=0\,,
\]
which shows that $G(\cdot,y)$ can be uniformly approximated on $\RR^n\minus W'$ by elements of the subspace $\cK$ as a consequence of the Hahn--Banach theorem.

To complete the proof of the theorem, let us take a sequence $(f_i)_{i=1}^\infty\subset\cK$ such that
\[
\|G(\cdot,y)-f_i\|_{C^0(\RR^n\minus W')}<\frac1i\,,
\]
which is guaranteed to exist by the above argument.  We can now use the Schauder estimate~\eqref{DalG} to show that, for all $x\in\RR^n\minus W$, one has
\[
\max_{|\al|\leq p}\big|D_x^\al G(x,y)-D^\al f_i(x)\big|\leq C_1\|G(\cdot,y)-f_i\|_{C^0(B(x,\rho))}<\frac{C_1}i
\]
with $C_1$ independent of $x$, as claimed.
\end{proof}

In the following key lemma we discuss the balayage of the poles, which basically allows us to get rid of the poles by taking them to infinity, exploiting the fact that the set $V$ is unbounded. The proof relies on an iterative argument that utilizes Lemma~\ref{L.approx} to sweep the poles further in each step.

\begin{lemma}\label{L.balayage}
Let $V$ be an unbounded domain of $\RR^n$ containing a point $y$. Then, for any $\ep>0$, there is a function $w $ which satisfies the equation $(\De-q)w =0$ in $\RR^n$ and approximates $G(\cdot,y)$ as
\[
\|G(\cdot,y)- w \|_{C^p(\RR^n\minus V)}<\ep\,.
\]
\end{lemma}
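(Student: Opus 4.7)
The plan is to push the pole $y$ along a proper ray inside $V$ by iterating Lemma~\ref{L.approx}, taking advantage of the fact that the complement $\RR^n\minus V$ only ever sees the error estimates produced at each stage. Since $V$ is unbounded and connected, I would first choose a sequence of points $z_0=y,z_1,z_2,\ldots$ in $V$ with $|z_k|\to\infty$, together with bounded open sets $W_k\subset V$ such that each $W_k$ contains small closed balls $B_k\ni z_k$ and $B_{k+1}\ni z_{k+1}$ linked by a short path in $V$. The geometric choice will be made so that $W_k$ eventually leaves every fixed compact set of $\RR^n$, for instance by shaping the $W_k$ as thin tubes around the polygonal ray joining the $z_k$.

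Next, I would construct inductively functions
\[
v_k=\sum_{z\in\cP_k}a_z^{(k)}\,G(\cdot,z)\,,\qquad \cP_k\subset B_k\,,
\]
starting from $v_0:=G(\cdot,y)$. Given $v_k$, I would apply Lemma~\ref{L.approx} separately to each $G(\cdot,z)$ appearing in $v_k$, playing the role of $G(\cdot,y)$ in that lemma, with open set $W_k\supset\{z\}$ and compact $B_{k+1}\subset W_k$ of nonempty interior. This produces, for every such $z$, a finite linear combination of Green's functions with poles in $B_{k+1}$ that approximates $G(\cdot,z)$ in $C^p(\RR^n\minus W_k)$. Assembling them with the coefficients $a_z^{(k)}$ defines $v_{k+1}$, a linear combination of Green's functions with pole set $\cP_{k+1}\subset B_{k+1}$. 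Picking the individual tolerances small enough, the triangle inequality yields
\[
\|v_{k+1}-v_k\|_{C^p(\RR^n\minus W_k)}<\ep_k
\]
for any prescribed summable sequence with $\sum_k\ep_k<\ep$.

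Since $W_k\subset V$, the inclusion $\RR^n\minus V\subset\RR^n\minus W_k$ makes $(v_k)$ a Cauchy sequence in $C^p(\RR^n\minus V)$, and the candidate $w$ would be defined as its limit there, immediately satisfying the bound $\|G(\cdot,y)-w\|_{C^p(\RR^n\minus V)}\leq\sum_k\ep_k<\ep$. To upgrade $w$ to a global solution of $(\De-q)w=0$ on all of $\RR^n$, I would invoke the geometric choice of the $W_k$: any compact $K\subset\RR^n$ lies in $\RR^n\minus W_j$ for all $j$ sufficiently large, so the same estimate gives $\|v_{j+1}-v_j\|_{C^p(K)}<\ep_j$ eventually, and a diagonal argument over an exhaustion by compacts upgrades convergence to $C^p\loc(\RR^n)$. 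Each $v_k$ satisfies $(\De-q)v_k=0$ off its finite pole set $\cP_k\subset B_k$, and since $B_k$ escapes every fixed compact, passing to the limit gives $(\De-q)w=0$ throughout $\RR^n$ (with elliptic regularity automatically providing the missing derivatives if $p$ is small).

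The main obstacle I anticipate is reconciling the $C^p(\RR^n\minus V)$ approximation bound with the requirement that $w$ be a solution globally, given that $\De-q$ does not obey a maximum principle that would allow one to transfer error estimates from the exterior of $V$ to the entire space. The geometric trick of shaping the $W_k$ so that they eventually clear every fixed compact set is precisely what circumvents the absence of such a principle, since it lets the $C^p(\RR^n\minus W_k)$ estimate do double duty: as a $C^p(\RR^n\minus V)$ bound (via the fixed inclusion) and as a $C^p\loc(\RR^n)$ bound (via the fact that each compact is eventually swallowed by the exterior of $W_k$).
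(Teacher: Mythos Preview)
Your proposal is correct and follows essentially the same approach as the paper's proof: both push the pole to infinity along a proper path in $V$ by iterating Lemma~\ref{L.approx}, choosing the intermediate open sets $W_k\subset V$ to be thin tubes that eventually clear every fixed compact set, so that the telescoping $C^p(\RR^n\minus W_k)$ estimates simultaneously yield the $C^p(\RR^n\minus V)$ bound and the $C^p\loc(\RR^n)$ convergence needed to conclude that the limit is a global solution.
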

\begin{proof}
As the set $V$ is unbounded, we can take a parametrized curve $\ga:[0,\infty)\to V$ without self-intersections (that is, $\ga$ is injective) such that
\[
\ga(0)=y\,,\qquad \lim_{t\to\infty}|\ga(t)|=\infty\,.
\]
This is the curve that we will use to `sweep the pole to infinity'.
For each nonnegative integer $i$ we denote by $K_i\subset V\cap B(\ga(i),\frac12)$ a compact neighborhood of the point $\ga(i)$, and we let $(W_i)_{i=1}^\infty\subset V$ be a family of bounded domains such that each $W_i$ contains both $K_{i-1}$ and $K_{i}$. We also require that $W_i$ is `narrow enough' in the sense that
\[
\sup_{x\in W_i}\dist\big(x,\ga([i-1,i])\big)<1\,.
\] 
Moreover, we introduce the notation $\cK_i$ for the space of finite linear combinations of $G(\cdot,z)$ with poles $z\in K_i$.

Let us fix $\ep>0$. By Lemma~\ref{L.approx}, there exists $w_1\in\cK_1$ such that
\[
\| G(\cdot,y)- w_1\|_{C^p(\RR^n\minus W_1)}<\frac\ep2\,.
\]
Since $w_1$ is a finite linear combination of Green's functions with poles in $K_1$, using Lemma~\ref{L.approx} one can inductively show that there are functions $w_i\in\cK_i$ such that
\begin{equation}\label{epi}
\|w_i -w_{i-1}\|_{C^p(\RR^n\minus W_i)}<\frac\ep{2^{i}}\,,
\end{equation}
for all $i\geq2$. Since the distance between the point $y$ and the set $W_i$ tends to infinity as $i\to\infty$, this shows that $w_i$ converges $C^p$-uniformly on compact sets to a function $w $ that solves the equation $(\De-q)w =0$ in $\RR^n$. The lemma now follows upon noticing that
\begin{align*}
\big\| G(\cdot,y)- w_i\big\|_{C^p(\RR^n\minus V)}&\leq\big\| G(\cdot,y)- w_1\big\|_{C^p(\RR^n\minus V)}+\sum_{j=2}^i\big\| w_j- w_{j-1}\big\|_{C^p(\RR^n\minus V)}\\
&<\sum_{j=1}^i\frac{\ep}{2^j}<\ep
\end{align*}
for all $i$, on account of the $C^p$ estimate~\eqref{epi}.
\end{proof}

Armed with the previous lemmas, we are now ready to prove the main result of this section, which will be frequently used in the rest of the paper. The rough idea of the proof is to extend the local solution $v$ to a smooth function $w$ in $\RR^n$ supported in a neighborhood of $S$; using an iterative procedure based on Lemmas~\ref{L.approx} and~\ref{L.balayage}, $w$ is then approximated by a linear combination of Green's functions with poles in the complement of $S$ and subsequently the poles are swept off to infinity.

\begin{theorem}\label{T.approx}
Let $S$ be a closed subset of $\RR^n$ whose complement does not have any relatively compact connected components. Then any function $v$ that satisfies the equation  $(\De-q)v=0$ in $S$ can be approximated in the $C^p$ norm by a global solution to this equation. (That is to say, for any $\ep>0$ there exists a function $u$ satisfying $(\De-q)u=0$ in $\RR^n$  with $\|u -  v\|_{C^p(S)}<\ep$.)
\end{theorem}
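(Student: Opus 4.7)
The plan is to combine the Green's function representation with Lemmas~\ref{L.approx} and~\ref{L.balayage} in an iteration over a compact exhaustion of $\RR^n$. I fix an exhaustion $\RR^n=\bigcup_i K_i$ by closed balls with $K_i\subset\inte K_{i+1}$, and set $S_i:=S\cap K_i$. Since $\RR^n\setminus S_i=(\RR^n\setminus S)\cup(\RR^n\setminus K_i)$ and each summand has only unbounded connected components (the first by hypothesis, the second trivially), the same topological property is inherited by $\RR^n\setminus S_i$. The whole argument then rests on a single-step approximation lemma: \emph{if $T\subset\RR^n$ is compact with $\RR^n\setminus T$ having no bounded component, any local solution $w$ of $(\De-q)w=0$ near $T$ admits a global $C^p(T)$-approximant}.

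To prove the single-step lemma I pick a cutoff $\chi$ supported in the open set where $w$ solves the PDE and equal to $1$ in a smaller neighborhood of $T$, and set $\tilde w:=\chi w$. The function $\tilde w$ is smooth and compactly supported on $\RR^n$, and $(\De-q)\tilde w=f$ with $f$ smooth, compactly supported, and vanishing in a neighborhood of $T$. The decay $G(x,y)\leq C|x-y|^{2-n}$ of Proposition~\ref{P.Green} validates the convolution formula $\tilde w(x)=-\int G(x,y)f(y)\,dy$ on all of $\RR^n$, so in particular $w(x)=-\int G(x,y)f(y)\,dy$ for $x\in T$. A fine Riemann-sum discretization of this integral, upgraded from uniform to $C^p(T)$ convergence by an interior Schauder estimate as in Lemma~\ref{L.G}, yields a finite linear combination $\sum_j a_j G(\cdot,y_j)$ with poles $y_j\in\supp f\subset\RR^n\setminus T$ that is $C^p(T)$-close to $w$. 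Because $\RR^n\setminus T$ has no bounded components, each $y_j$ lies in an unbounded domain $V_j\subset\RR^n\setminus T$, and Lemma~\ref{L.balayage} produces a global solution $w_j$ with $\|G(\cdot,y_j)-w_j\|_{C^p(\RR^n\setminus V_j)}$ arbitrarily small; since $T\subset\RR^n\setminus V_j$, the combination $u:=\sum_j a_j w_j$ is the sought global $C^p(T)$-approximant of $w$.

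The main result then follows by iteration. Starting from $u_0:=0$ and given a global $u_i$ with $\|u_i-v\|_{C^p(S_i)}$ small, I apply the single-step lemma to the local solution $v-u_i$ with tolerance $\vep/2^{i+1}$ to produce a global solution $\delta_i$, and set $u_{i+1}:=u_i+\delta_i$. The main technical obstacle is to ensure that $(u_i)$ converges in $C^p$ on every compact subset of $\RR^n$ and not only on $S$, so that the limit is genuinely a global $C^p$ solution. I would address this by strengthening the single-step lemma to produce an approximant $u$ that is \emph{simultaneously} $C^p(T)$-close to $w$ and $C^p(L)$-close to zero for any prescribed compact set $L\subset\RR^n\setminus T$: inspection of the proof shows that choosing $\chi=0$ on $L$, a sufficiently fine Riemann partition, and balayage tubes $V_j$ disjoint from $L$ delivers both estimates at once, exploiting the fact that $\tilde w$ itself vanishes on $L$ by construction. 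Applied with $L$ a suitable compact subset of $K_i$ outside a fixed neighborhood of $S$, and combined with the inductive smallness of $v-u_i$ on the overlap $S_i\subset K_i$, this bounds $\|\delta_i\|_{C^p(K_i)}$ by $\vep/2^i$, so the telescoping series $\sum_i\delta_i$ is $C^p$-Cauchy on every compact subset of $\RR^n$, and the limit $u=\lim u_i$ is a global solution satisfying $\|u-v\|_{C^p(S)}<\vep$.
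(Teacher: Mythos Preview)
Your overall strategy---prove a compact-set approximation lemma via the Green's function representation, discretization, and balayage, then iterate over an exhaustion---is sound and close in spirit to the paper's. However, there is a genuine gap in the convergence step. You claim that $\|\delta_i\|_{C^p(K_i)}\leq\ep/2^i$ by combining smallness on a set $L\subset K_i$ lying outside a fixed neighborhood $N_S$ of $S$ (where $\tilde w=0$) with the inductive smallness of $v-u_i$ on $S_i$. But $L\cup S_i$ does not cover $K_i$: the shell $K_i\cap(N_S\setminus S)$, where your cutoff $\chi$ transitions from $1$ to $0$, lies in neither set. On this shell you have no control whatsoever: the induction hypothesis only bounds $v-u_i$ on $S_i\subset S$, not on the surrounding neighborhood, and moreover the poles $y_j$ of your Riemann sum sit precisely in $\supp f\subset N_S\setminus S$, so the discretization does not even approximate $\tilde w$ there. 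The bound on $\|\delta_i\|_{C^p(K_i)}$ is therefore unjustified, and with it the global $C^p$-convergence of $\sum\delta_i$.

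The paper avoids this by a different organization: it extends $v$ once to $w=\chi v$, then decomposes the source $(q-\De)w$ \emph{spatially} via a partition of unity $\{\vp_i\}$ subordinate to the annuli $U_i=\Om_{i+1}\setminus\overline\Om_{i-1}$, and handles each piece separately (with an auxiliary use of Lax--Malgrange for the partial sums). The point is that the poles and the transition region for the $i$-th piece are confined to $U_i$, which is eventually disjoint from any fixed compact set; this is exactly what makes the estimates~\eqref{ulth1}--\eqref{ulth2} hold on $S\cup(\RR^n\setminus U_i)$ and forces the series to converge in $C^p$ on compacta. Your iterative scheme can be salvaged, but only by running the induction with $S$ replaced by a closed neighborhood $S'=\overline{N}$ on which $v$ still solves the equation (chosen so that $\RR^n\setminus S'$ still has no bounded components): the inductive hypothesis then bounds $v-u_i$ on $S'\cap K_i$, and with $L:=K_i\setminus N$ one genuinely has $K_i=(S'\cap K_i)\cup L$. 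As written, with $S$ itself, the argument does not close.
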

\begin{proof}
By hypothesis, there exists an open subset $\Om\supset S$ such that $(\De-q)v=0$ in $\Om$. Let us take a smooth function $\chi:\RR^n\to\RR$  equal to $1$ in a neighborhood of $S$ and identically zero outside $\Om$, and define a smooth extension of $v$ to $\RR^n$ by setting $w:=\chi v$.

Consider an exhaustion of $\RR^n$ by bounded domains $\Om_i$ as in the proof of Proposition~\ref{P.Green}, which we choose so that $\RR^n\minus\Om_i$ does not have any compact components. We define the sets $U_i:=\Om_{i+1}\minus\BOm_{i-1}$, with $\Om_0:=\emptyset$, and take smooth functions $\vp_i$ on $\RR^n$ whose support is contained in $U_i$ and such that $\sum_{i=1}^j\vp_i=1$ in a neighborhood of $\Om_j$. A possible way to construct these functions is to take smooth functions $\phi_i$ compactly supported in $\Om_{i+1}$ and such that $\phi_i=1$ in a neighborhood of $\Om_i$; one can now set $\vp_i:=\phi_i-\phi_{i-1}$, with $\phi_0:=0$.
 
We will now write the function $w$ as a sum of global solutions to the equation and of functions that satisfy the equation but in a compact set. To do so, for each positive integer $i$ let us define the smooth function
\[
f_i:=\vp_i(q-\De)w\,,
\]
which is supported in $(\Om\minus S)\cap U_i$ by the definition of the functions $\chi$ and $\vp_i$. Defining the function 
\[
w_i(x):=\int G(x,y)\, f_i(y)\,dy\,,
\]
it is apparent that $w_i$ satisfies the equation $(\De-q)w_i=-f_i$, so that
\[
(\De-q)\bigg(w-\sum_{i=1}^jw_i\bigg)=0
\]
in $\BOm_j$. By applying the Lax--Malgrange theorem~\cite[Theorem 3.10.7]{Na68} inductively, one  easily infers that there exist functions $g_i$ satisfying the equation $(\De-q)g_i=0$ in $\RR^n$ and such that
\begin{equation}\label{BOmj}
\bigg\| w-\sum_{i=1}^j (w_i+g_i)\bigg\|_{C^p(\BOm_j)}<\frac1j
\end{equation}
for any positive integer $j$. Therefore the function $w$ can be expressed as
\begin{equation}\label{w.conv}
w=\sum_{i=1}^\infty(w_i+g_i)\,,
\end{equation}
the convergence being $C^p$-uniform on compact subsets of $\RR^n$.

Let us now approximate $w_i$ by functions $\hw_i$ that satisfy the equation in $\RR^n$ minus a finite set of points. Let us use the notation $\|f_i\|_{L^1}:=\int |f_i(x)|\,dx$ for the $L^1$ norm of $f_i$ and suppose that this quantity is nonzero. By Lemma~\ref{L.G}, for each point $y$ in the support of the function $f_i$
one can find a neighborhood $B_y\ni y$ contained in $(\RR^n\minus S)\cap U_i$ such that
\[
\| G(\cdot,y) -  G(\cdot,z)\|_{C^p(S\cup(\RR^n\minus U_i))}<\frac\ep{\|f_i\|_{L^1}2^{i+1}}
\]
for all  $z\in B_y$. As the support of $ f_i$ is compact, there exist finite pairwise disjoint sets $\cP_i\subset \supp f_i$ such that $\supp f_i$ is contained in the union of balls $\bigcup_{y\in\cP_i}B_{y}$. Let $(\chi_y)_{y\in\cP_i}\subset C^\infty_0(\RR^n)$ be a partition of unity subordinated to the balls $\{B_y:y\in\cP_i\}$, that is, nonnegative functions such that $\supp\chi_y\subset B_y$ and
\begin{equation*}
  \sum_{y\in\cP_i}\chi_y(x)=1
\end{equation*}
for all $x$ in a neighborhood of $\supp f_i$. Defining the function $\hw_i$ as
\[
\hw_i(x):=\sum_{y\in\cP_i}G(x,y)\,\int\chi_y(z)\, f_i(z)\,dz
\]
and using that $\chi_y(x)$ defines a partition of unity, one easily obtains the $C^p$ estimate
\begin{align}
\big| D^\al w_i(x)- D^\al \hw_i(x)\big|&=\bigg|\sum_{y\in\cP_i} \int\big( D^\al_x G(x,z)- D^\al_x G(x,y)\big)\, \chi_y(z)\, f_i(z)\,dz\bigg|\notag\\
&< \frac{\ep}{\|f_i\|_{L^1} 2^{i+1}}\int |f_i(z)|\,\bigg(\sum_{y\in\cP_i}\chi_y(z)\bigg)\,dz=\frac{\ep}{2^{i+1}}\label{ulth1}\,,
\end{align}
which holds pointwise in $S\cup(\RR^n\minus U_i)$ for all $|\al|\leq p$. When $\|f_i\|_{L^1}=0$, we can simply set $\hw_i:=0$.

We will next use a balayage argument to sweep the poles of $\hw_i$. As $\RR^n\minus S$ does not have any bounded connected components, one can take (possibly disconnected) pairwise disjoint, open unbounded sets $V_i\subset \RR^n\minus S$ such that each point $y\in\cP_i$ is contained in an unbounded component of $V_i$. Notice that the sets $V_i$ can be chosen so that for each compact set $K\subset\RR^n$ there exists an integer $J$ such that $K\cap V_i=\emptyset$ for all $i\geq J$. As $\hw_i$ is a finite linear combination of Green's functions with poles in $\supp f_i\subset U_i\minus S$, from Lemma~\ref{L.balayage} it follows that there exists a function $h_i$ satisfying the equation $(\De-q)h_i=0$ in $\RR^n$ and such that
\begin{equation}\label{ulth2}
 \|\hw_i- h_i\|_{C^p(\RR^n\minus V_i)}<\frac{\ep}{2^{i+1}}\,.
\end{equation}

As we will see, this condition ensures that the function
\begin{equation}\label{u.conv}
u:=\sum_{i=1}^\infty(h_i+g_i)
\end{equation}
is well defined and that the sum converges $C^p$-uniformly on compact subsets of $\RR^n$. In order to show this, let us take an arbitrary compact subset $K$ of $\RR^n$ and an integer $J$ such that $V_i\cap K=U_i\cap K=\emptyset$ for all $i\geq J$, which is known to exist by the way the sets $V_i$ and $U_i$ have been defined. In particular, the latter condition implies that $K$ is contained in $\BOm_{J-1}$. The $C^p$-uniform convergence of the series~\eqref{u.conv} on the set $K$ then follows from the pointwise $C^p$ estimate on $K$
\begin{align*}
  \bigg|\sum_{i=J}^k \big(D^\al h_i &+D^\al g_i \big)\bigg|\leq \sum_{i=J}^k \big|D^\al h_i -D^\al \hw_i \big|+ \sum_{i=J}^k \big|D^\al \hw_i -D^\al w_i \big| \\
  &\qquad\qquad\qquad\qquad\qquad\qquad\qquad\qquad\qquad+ \bigg|\sum_{i=J}^k \big(D^\al g_i
  +D^\al w_i \big)\bigg|\\
  &\leq \big|D^\al w \big| +\bigg|D^\al w -\sum_{i=1}^k\big(D^\al g_i +D^\al w_i \big)\bigg| +\bigg|\sum_{i=1}^{J-1}\big(D^\al g_i + D^\al w_i \big)\bigg|+\ep\,,\\
  &\leq  \big|D^\al w \big|+\bigg|\sum_{i=1}^{J-1}\big(D^\al g_i + D^\al w_i \big)\bigg|+\ep+\frac1k\,,
  \end{align*}
where $|\al|\leq p$, $k>J$ and we have used Eqs.~\eqref{BOmj}--\eqref{ulth2}. As a consequence of the $C^p$-uniform convergence on compact sets, clearly $u$ satisfies the equation $(\De-q)u=0$ in $\RR^n$.

Finally, one can invoke the uniform convergence of the series~\eqref{w.conv} and~\eqref{u.conv} to show that $u$ approximates $v$ in the set $S$ in the $C^p$ norm:
\begin{align*}
  \|u-v\|_{C^p(S)}&=\| u-w\|_{C^p(S)}\leq \sum_{i=1}^\infty\| h_i- w_i\|_{C^p(S)}\\
  &\leq  \sum_{i=1}^\infty\| h_i- \hw_i\|_{C^p(S)}+ \sum_{i=1}^\infty\| \hw_i- w_i\|_{C^p(S)}< 2\sum_{i=1}^\infty\frac\ep{2^{i+1}}=\ep\,,
\end{align*}
as we wanted to prove.
\end{proof}

\section{Noncompact level sets: the case of finite topological type}
\label{S.finite}

Our goal in the following two sections is to study which hypersurfaces can be level sets of solutions to the (Laplace or Yukawa) equation
\begin{equation*}
(\De-k^2)u=0
\end{equation*}
in $\RR^n$, with $k$ a real constant. For convenience, in this section we will focus on the case where the hypersurfaces have finite topological type, postponing the case of infinitely generated hypersurfaces until Section~\ref{S.infinite}.

In this section we will establish a sufficient condition that allows us to show that a wide class of hypersurfaces can be realized as level sets of a solution to the above equation modulo diffeomorphism. As mentioned in Section~\ref{S.guide}, a key step in the proof is to exploit the `freedom' associated to this diffeomorphism to embed the hypersurface in a way that permits us to have a fine control at infinity of various quantities. These suitably embedded hypersurfaces will be called tentacled.

It is standard that the notion of infinity in a hypersurface is captured in a precise way by its end structure. (Let us recall that, roughly speaking, an end of a noncompact manifold $L$ is a component of $L\minus K$ for a sufficiently large compact subset $K\subset L$; for the precise definition, cf.~\cite{HR96}). An end is said to be (smoothly) {\em collared} if it has a neighborhood diffeomorphic to $\Si×[0,\infty)$, where $\Si$ is a compact submanifold of codimension $1$ in $L$. Tentacled hypersurfaces (of finite type) are simply hypersurfaces with a finite number of ends, which are all collared and whose geometry is suitably controlled:

\begin{definition}\label{D.tentacled}
  An unbounded domain $\Om$ of $\RR^n$ with smooth boundary of finite type is {\em tentacled}\/ if one can find $J$ embedded images of $\RR^{n-1}$ in $\RR^n$, which we will call $\Pi_j$, such that the following statements hold (cf.\ Figure~\ref{F.tentacled}):
  \begin{enumerate}
  \item Each $\Pi_j$ divides $\RR^n$ into two domains, $H_j^+$ and $H_j^-$, and the sets $H_1^+,\dots, H_J^+$ are pairwise disjoint.
  \item The intersection $\Om\cap H_1^-\cap \cdots\cap H_J^-$ is bounded.
  \item Each connected component of the (possibly disconnected) hypersurface with boundary $\Om\cap(\Pi_1\cup\cdots \cup \Pi_J)$ is bounded and contained in an affine hyperplane. These components will be denoted by $\La_i$, with $1\leq i\leq N$. Here $N$ denotes the number of ends of the domain.
  \item Each `tentacle' $S_i$, which is the component of $\Om\cap (H_1^+\cup \cdots \cup H_J^+)$ connected with the `cap' $\La_i$, is isometric to the Riemannian product $\La_i× (0,\infty)$; in particular, the closure of $\La_i$ intersects orthogonally the boundary of $\Om$.
  \end{enumerate}

\begin{figure}[t]
  \centering
  {\psfrag{H1p}{$H_1^+$}\psfrag{H1m}{$H_1^-$}
    \psfrag{H2p}{$H_2^+$}\psfrag{H2m}{$H_2^-$}
    \psfrag{H3p}{$H_3^+$}\psfrag{H3m}{$H_3^-$} \psfrag{S1}{$S_1$}
    \psfrag{S2}{$S_2$} \psfrag{S3}{$S_3$} \psfrag{Pi1}{$\Pi_1$}
    \psfrag{Pi2}{$\Pi_2$} \psfrag{Pi3}{$\Pi_3$} \psfrag{La1}{$\La_1$}
    \psfrag{La2}{$\La_2$} \psfrag{La3}{$\La_3$} \psfrag{L}{$L$}
  \includegraphics[scale=0.2,angle=0]{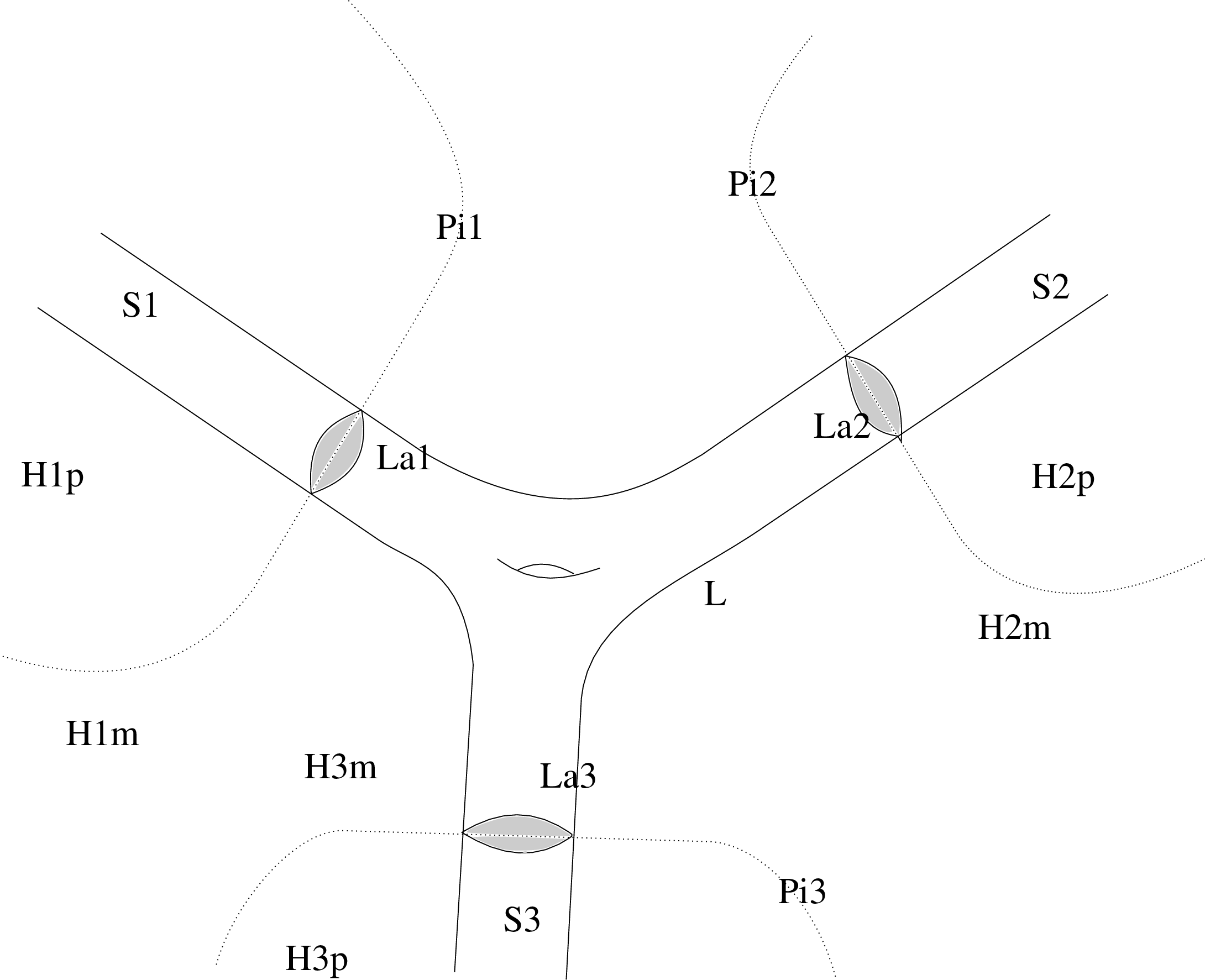}}
\caption{A tentacled surface $L$ diffeomorphic to the torus with $N=3$ ends.} 
\label{F.tentacled}
\end{figure}
  
  A (possibly disconnected) hypersurface of finite type is {\em tentacled} if it has a finite number of connected components, each of which is the boundary of a tentacled domain.
\end{definition}
\begin{remark}
  As we have allowed the intersection $\Om\cap \Pi_j$ to be disconnected, the different tentacles $S_i$ whose caps $\La_i$ are contained in the same intersection $\Om\cap \Pi_j$ can be linked among them. In this case, the number of ends $N$ is larger than the number of hyperplanes $J$.
\end{remark}

A very wide class of hypersurfaces of $\RR^n$ can be transformed into a tentacled hypersurface via an appropriate diffeomorphism of $\RR^n$. In the following proposition we will characterize the class of hypersurfaces that are equivalent to a tentacled hypersurface modulo diffeomorphism. Although the proof is elementary (and will be safely omitted), this result is of interest because it automatically provides a number of nontrivial examples of hypersurfaces diffeomorphic to a tentacled submanifold, which will be subsequently shown to be diffeomorphic to a level set of a harmonic function in $\RR^n$:

\begin{proposition}\label{P.algebraic}
  Let $M$ be a hypersurface of $\RR^n$ whose components are all
  noncompact and finite in number. Then there exists a diffeomorphism $\Phi$ of $\RR^n$ transforming $M$ into a tentacled hypersurface $\Phi(M)$ if and only if there is a compact hypersurface with boundary $\cM$ of the closure of the $n$-ball $\overline{\BB^n}$ such that:
  \begin{enumerate}
  \item The intersection of $\cM$ with the boundary of the $n$-ball is precisely its boundary $\pd\cM$, and this intersection is transverse.
    \item There is an embedding $h:\RR^n\to\overline{\BB^n}$ such that $h(\RR^n)=\BB^n$ and $h(M)$ is the interior of $\cM$.
    \end{enumerate}
\end{proposition}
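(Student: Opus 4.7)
The plan is to argue both implications via the correspondence between ``noncompact hypersurfaces in $\RR^n$ with controlled straight cylindrical ends'' and ``compact hypersurfaces-with-boundary in $\overline{\BB^n}$ meeting $\pd\BB^n$ transversely''. The main tools will be the collar neighborhood theorem near $\pd\cM$ and the isotopy extension theorem, applied with enough control at infinity to handle the noncompactness.

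For the necessity direction, I would replace $M$ by $\Phi(M)$ and assume $M=\pd\Om$ is itself tentacled, so that $\Om$ consists of a bounded core together with finitely many straight half-cylinders $\La_i\times(0,\infty)$ attached. First I would apply an auxiliary diffeomorphism of $\RR^n$ to arrange the tentacles to point in pairwise distinct radial directions emanating from a common large ball $B_R$ that contains the core. Then I would construct $h:\RR^n\to\overline{\BB^n}$ explicitly: on each tentacle, use a parametrization of the form $(y,t)\mapsto p_i-\tau(t)(e_i+\ep R_i(y-y_i^0))$, where $p_i\in\pd\BB^n$ is the asymptotic endpoint, $e_i$ is the inward unit radial vector at $p_i$, $R_i$ is an orthogonal map identifying $\Pi_i$ with the tangent hyperplane to $\pd\BB^n$ at $p_i$, $\ep$ is small, and $\tau:(0,\infty)\to(0,\de)$ is a diffeomorphism matching the identity for small $t$; on the bounded core, extend $h$ to a diffeomorphism onto the complementary part of $\BB^n$ by interpolating through a fixed radial compression. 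Setting $\cM:=\overline{h(M)}\subset\overline{\BB^n}$, smoothness and transverse meeting with $\pd\BB^n$ follow from the explicit radial form of $h$ on the tentacles.

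For the sufficiency direction, given $\cM$ and $h$, I would set $M:=h^{-1}(\Int\cM)$ and note that compactness of $\cM$ forces $\pd\cM=\bigsqcup_{i=1}^N\Si_i\subset S^{n-1}$ to have finitely many components. Transversality plus the collar neighborhood theorem yield an embedding $\bigsqcup_i\Si_i\times[0,1)\hookrightarrow\cM$ identifying $\Si_i\times\{0\}$ with $\pd\cM$; pulling back by $h$ makes each end of $M$ diffeomorphic to $\Si_i\times[0,\infty)$. Each $\Si_i$, as a closed orientable hypersurface of $S^{n-1}$, bounds by the Jordan--Brouwer theorem, so after stereographic projection I obtain a compact region $\La_i\subset\RR^{n-1}$ with $\pd\La_i\cong\Si_i$. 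I would then fix affine hyperplanes $\Pi_j\subset\RR^n$ satisfying the combinatorial requirements of Definition~\ref{D.tentacled}, embed each $\La_i$ as a bounded cap in a chosen $\Pi_j$, and thereby exhibit a target tentacled hypersurface $L_0\subset\RR^n$ whose ends are the straight cylinders $\pd\La_i\times(0,\infty)$.

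The hard part will be building the ambient diffeomorphism $\Phi$ of $\RR^n$ with $\Phi(M)=L_0$. I would proceed inductively on the ends, straightening one end at a time by a compactly supported ambient isotopy of $\RR^n$ produced by applying the isotopy extension theorem to the properly embedded collar, after composing it with the explicit parametrization of the target straight cylinder. The support of each successive isotopy must be chosen disjoint from a fixed neighborhood of the already-straightened portion, so that the composition is a well-defined diffeomorphism of $\RR^n$. Once all ends are straight cylinders, the remaining compact cores of $M$ and $L_0$ are diffeomorphic manifolds-with-boundary sitting inside a large ball, and a final ambient diffeomorphism matching them follows from a standard application of the disk theorem together with uniqueness of collars.
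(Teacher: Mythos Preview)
The paper does not actually prove this proposition: it states that ``the proof is elementary (and will be safely omitted)'' and moves directly to Example~\ref{E.algebraic}. Your outline is broadly correct and contains the expected ingredients (collar neighborhoods, Jordan--Brouwer for the caps, isotopy extension), which is likely all the authors had in mind.

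That said, two points in your sufficiency argument deserve tightening. First, the end-straightening isotopies cannot be compactly supported: carrying a properly embedded half-cylinder $\Si_i\times[0,\infty)$ onto a prescribed straight half-cylinder is a \emph{proper} isotopy, and the ambient extension it generates is a diffeomorphism of $\RR^n$ that moves points arbitrarily far out. This is harmless because $N$ is finite, so you are composing finitely many global diffeomorphisms; but the phrase ``compactly supported'' should be dropped, and the disjoint-support bookkeeping replaced by first arranging the ends into separated sectors so that each straightening can be taken to fix a neighborhood of the previously straightened ends. Second, the final ``core-matching'' step is not as routine as you suggest: two diffeomorphic compact hypersurfaces-with-boundary in a ball sharing the same boundary need not be ambient isotopic (think of distinct Seifert surfaces for a link), so the disk theorem alone does not close the argument. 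The clean fix is to avoid pre-specifying $L_0$ altogether: straighten the ends of $M$ directly and declare the resulting hypersurface (with its inherited compact core) to be your tentacled hypersurface, checking afterwards that it satisfies Definition~\ref{D.tentacled}. With these adjustments the argument goes through.
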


\begin{example}\label{E.algebraic}
  By Richards' classification theorem~\cite{Ri63}, it is
  straightforward that any noncompact surface of finite type  (i.e.,
  each component is a genus $g$ torus with $N$ ends as considered in
  Question~\ref{Q.1}) can be embedded as a tentacled submanifold in
  $\RR^3$. More generally, a theorem of Calcut and King~\cite{CK04}
  ensures that any nonsingular real algebraic hypersurface of $\RR^n$ satisfies the hypotheses of Proposition~\ref{P.algebraic}, and can thus be realized as a tentacled hypersurface modulo diffeomorphism.
\end{example}

Let us begin by proving some intermediate lemmas. To construct the local solution of the equation $(\De-k^2)u=0$ that will be subsequently approximated by a global one, a basic tool is the Dirichlet Green's function $G_\Om(x,y)$ of the operator $\De-k^2$. Therefore, we will make use of a pointwise estimate  of the Green's function and its second-order derivatives that for our purposes is most conveniently stated as follows. Up to first order, this estimate is proved in Ref.~\cite[Theorem 5.7]{Ta97}; in the case of second-order derivatives, the argument follows the same lines and was kindly communicated to us by Yoichi Miyazaki (more generally, it yields $C^m$~estimates for the Green's function of appropriate elliptic operators of order $m$). For the benefit of the reader, we will sketch the proof of this result below. The definition of a uniform $C^r$ domain is given in~\cite[Definition 3.2]{Ta97}; in this paper, we will only need to know the (quite evident) fact that tentacled domains (or periodic tentacled domains, to be defined in Section~\ref{S.infinite} below) are uniform $C^r$ domains for all $r$.  

\begin{theorem}\label{T.Jap}
  Let $\Om$ be a uniform $C^4$ domain of $\RR^n$. Then, for any positive constant $\ep$ we have the pointwise estimate
  \[
\big|D_x^\al D_y^\be G_\Om(x,y)\big|\leq C|x-y|^{2-n-|\al|-|\be|}\e^{C'(\ep-(\la_\Om+k^2)^{1/2})|x-y|}
  \]
for the Green's function and its derivatives, which holds for all points $(x,y)\in\BOm×\BOm$ and any multiindices with $|\al|,|\be|\leq2$. Here $\la_\Om$ is the bottom of the spectrum of $-\De$ in the domain $\Om$ with Dirichlet boundary conditions and $C,C'$ are positive constants.
\end{theorem}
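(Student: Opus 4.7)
My plan is to split the estimate into a short-range regime, where the singularity at the diagonal is the main feature, and a long-range regime, where the exponential decay is the main feature, and then bootstrap a weighted $L^2$ bound into a pointwise $C^2$ bound on balls of unit size.

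For the short-range regime $|x-y|\leq 1$, I would use elliptic regularity. The function $G_\Om(\cdot,y)$ satisfies $(\De-k^2)G_\Om(\cdot,y)=0$ in $\Om\minus\{y\}$ with Dirichlet boundary values on $\pd\Om$, and is dominated pointwise by the free Green's function $C|x-y|^{2-n}$. Combining Schauder estimates in the interior with boundary Schauder estimates (which apply uniformly because $\Om$ is a uniform $C^4$ domain, so balls centered at points of $\BOm$ can be straightened by $C^4$ charts of controlled size), I would obtain the local pointwise bound
\[
\bigl|D_x^\al D_y^\be G_\Om(x,y)\bigr|\leq C|x-y|^{2-n-|\al|-|\be|}
\]
for $|\al|,|\be|\leq 2$ and $|x-y|\leq 1$. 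The symmetry $G_\Om(x,y)=G_\Om(y,x)$ lets me transfer the regularity in $x$ to regularity in $y$, so the mixed derivative with $|\al|=|\be|=2$ is obtained by iterating the argument in both slots.

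For the long-range regime I would run the Combes--Thomas / Agmon trick. Fix $\ep>0$, let $\mu:=(\la_\Om+k^2)^{1/2}-\ep$, and conjugate the self-adjoint operator $H=-\De+k^2$ (with Dirichlet boundary conditions) by the multiplication by $\e^{\mu\,\rho(x)}$, where $\rho$ is a smooth, $1$-Lipschitz approximation of the Euclidean norm. The conjugated operator $H_\mu=\e^{\mu\rho}H\e^{-\mu\rho}$ is a bounded perturbation of $H$ of size $O(\mu)$, so for $\ep>0$ small enough the spectrum of $H_\mu$ is still contained in a half-plane separated from zero by $\ep(\la_\Om+k^2)^{1/2}$, and $H_\mu^{-1}$ is bounded on $L^2(\Om)$ with norm independent of the position. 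The integral kernel of $H_\mu^{-1}$ is exactly $\e^{\mu\rho(x)}G_\Om(x,y)\e^{-\mu\rho(y)}$, so the standard pairing trick gives, for every unit vector $v$ supported in a unit ball around $y$ and every unit vector $w$ supported in a unit ball around $x$ (with $|x-y|\geq 2$, say), the weighted $L^2$ bound
\[
\bigl|\langle w,G_\Om v\rangle\bigr|\leq C\,\e^{-\mu|x-y|+O(\mu)}.
\]
Unpacking this over varying $y$ inside its unit ball gives a weighted $L^2$ bound on $G_\Om(\cdot,y)$ on the ball $B(x,1)$; combining it with the elliptic PDE satisfied there (whose coefficients are smooth and bounded uniformly because of the uniform $C^4$ structure) and iterated Schauder/Sobolev embeddings, I bootstrap to a pointwise $C^2$ bound in $x$ on, say, $B(x,\tfrac12)$, and by symmetry in $y$ as well, preserving the exponential rate up to an arbitrarily small further loss absorbed into $\ep$.

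The main technical obstacle is precisely this last bootstrap: the Combes--Thomas argument is an $L^2$ statement, and one has to convert it into a pointwise $C^2$ statement \emph{without} losing more than a factor $\e^{\ep|x-y|}$ in the decay rate. The point is that the exponential weight is essentially constant on unit balls, so it factors out of the Schauder/Sobolev embedding on such balls, and the uniform $C^4$ hypothesis ensures that the Schauder constants are uniform in the position of the ball (including when it meets $\pd\Om$). Gluing the short-range and long-range regimes then yields the claimed bound on all of $\BOm\times\BOm$ after adjusting $C,C'$.
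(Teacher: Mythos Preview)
Your approach is correct and is a genuinely different route from the paper's. The paper does not use Combes--Thomas directly on the resolvent; instead it follows Tanabe's scheme: from uniform $L^p\to L^p$ resolvent bounds it passes to $L^p\to L^q$ bounds via Sobolev, then to pointwise $C^2\times C^2$ bounds on the kernel of a \emph{high power} $(\De_\Om-\la)^{-2j}$ via an $ST^*$ duality lemma, then to heat-kernel bounds by a contour integral, then to Gaussian heat-kernel bounds by Davies' perturbation method, and finally recovers $G_\Om$ by Laplace-transforming the heat kernel. Your argument stays at the level of the Green's function throughout and is more direct; the paper's detour through resolvent powers and the heat kernel is what makes the $ST^*$ lemma cleanly deliver the mixed $D_x^\al D_y^\be$ bounds, which in your approach require the two-step bootstrap (regularity in $y$ first, then feed $D_y^\be G_\Om(\cdot,y)$ back into the $x$-equation). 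Both approaches lean on the uniform $C^4$ hypothesis for exactly the same reason, namely uniformity of the boundary Schauder constants. One small point worth tightening in your write-up: the conjugated perturbation $H_\mu-H$ contains a first-order term $2\mu\nabla\rho\cdot\nabla$, so it is only \emph{relatively} bounded (with small relative bound), not bounded; this is of course enough for the resolvent of $H_\mu$ to exist with the stated control, but your phrase ``bounded perturbation of $H$ of size $O(\mu)$'' should be read in that relative sense.
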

\begin{proof}
  When $|\al|,|\be|\leq 1$, this result is proved in~\cite[Theorem 5.7]{Ta97} using the key estimate~\cite[Theorem 5.5]{Ta97}
  \begin{equation}\label{LpLp}
\|D^\al(\De_\Om-\la)^{-1}\|_{L^p\to L^p}\leq C|\la_\Om+\la|^{\frac{|\al|}2-1}\,,
\end{equation}
valid for $|\al|\leq 2$. Here $\la$ is a possibly complex constant (whose real part we assume to be larger than $-\la_\Om$), $\|\cdot\|_{L^p\to L^q}$ stands for the operator norm $L^p(\Om)\to L^q(\Om)$ and we are denoting by $\De_\Om$ the Laplacian in $\Om$ with Dirichlet boundary conditions. Hence in what follows we will show how the argument can be modified to allow multiindices with $|\al|,|\be|\leq 2$.

Throughout this proof,  for the sake of simplicity we will denote by $C$ generic positive constants independent of $\la$. The $L^p\to L^q$ norm of the resolvent can be readily obtained from the above inequality and the Sobolev inequality
\begin{equation}\label{LpLq}
\|w\|_{L^q}\leq C \|w\|_{L^p}^{1-\frac n2(\frac1p-\frac1q)} \|w\|_{W^{2,p}}^{\frac n2(\frac1p-\frac1q)}\,,
\end{equation}
thereby finding that
\[
\|(\De_\Om-\la)^{-1}\|_{L^p\to L^q}\leq C |\la_\Om+\la|^{-1+\frac n2(\frac1p-\frac1q)}
\]
whenever $1<p<q<\infty$ and $p^{-1}-q^{-1}<2/n$.

It is a trivial matter to see that the $L^p\to L^p$ bound for the resolvent~\eqref{LpLp} yields the inequality $\|(\De_\Om-\la)^{-2}\|_{L^p\to L^p}\leq C|\la_\Om+\la|^{-2}$, while by elliptic regularity it is well known that $\|(\De_\Om-\la)^{-1}\|_{W^{1,p}\to W^{3,p}}\leq C$. Hence once obtains that
\[
\|(\De_\Om-\la)^{-2}\|_{L^p\to W^{3,p}}\leq  \|(\De_\Om-\la)^{-1}\|_{L^p\to W^{1,p}} \|(\De_\Om-\la)^{-1}\|_{W^{1,p}\to W^{3,p}} \leq C |\la_\Om+\la|^{-\frac12}\,.
\]
Therefore, using these estimates and the Sobolev inequality
\[
\|D^\al w\|_{L^\infty}\leq C\|w\|_{L^p}^{1-\frac{|\al|}3-\frac n{3p}} \|w\|_{W^{3,p}}^{\frac{|\al|}3+\frac n{3p}}
\]
for $|\al|\leq 2$ and $p>n$ we immediately get the $L^p\to L^\infty$ bound
\begin{equation}\label{LpLinfty}
\|D^\al(\De_\Om-\la)^{-2}\|_{L^p\to L^\infty}\leq C|\la_\Om+\la|^{-2+\frac12(|\al|+\frac np)}
\end{equation}
for the above range of parameters.

The next step is to prove that
\[
\|(\De_\Om-\la)^{-m}\|_{L^2\to L^p}\leq C|\la_\Om+\la|^{-m+\frac n2(\frac12-\frac 1p)}
\]
for all finite $p>n$ and large enough $m$. To this end, it suffices to take $m$ sufficiently large, so that there are numbers $2=:p_0<p_1<\cdots <p_m:=p$ whose consecutive inverses satisfy $p_{i-1}^{-1}-p_i^{-1}<\frac 2n$. Then one can apply the $L^p\to L^q$ estimate~\eqref{LpLq} to each pair of consecutive indices to yield the desired $L^2\to L^p$ bound.

Now we can combine the above equation and the estimate~\eqref{LpLinfty} to derive that, with $m:=j-2$,
\[
\|D^\al(\De_\Om-\la)^{-j}\|_{L^2\to L^\infty}\leq C|\la_\Om+\la|^{-j+\frac{|\al|}2+\frac n4}
\]
for $|\al|\leq 2$. Since the image of $(\De_\Om-\la)^{-j}$ is obviously contained in the Hölder space $C^{1-\frac np}$ provided $j$ is chosen as above, we can apply Tanabe's $ST^*$ lemma~\cite[Lemma 5.10]{Ta97} with $S:=D^\al(\De_\Om-\la)^{-j}$, $T:=D^\be(\De_\Om-\la)^{-j}$ and multiindices with $|\al|,|\be|\leq 2$ to derive that the integral kernel $K(x,y)$ of the operator $(\De_\Om-\la)^{-2j}$ is of class $C^2$ in $\Om×\Om$ and satisfies the pointwise bounds
\begin{align*}
  |D^\al_x D^\be_y K(x,y)|&\leq \|D^\al (\De_\Om-\la)^{-2j}D^\be\|_{L^1\to L^\infty}\\
  &\leq \|D^\al(\De_\Om-\la)^{-j}\|_{L^2\to L^\infty}\|(\De_\Om-\la)^{-j}D^\be\|_{L^1\to L^2}\\
  &\leq \|D^\al(\De_\Om-\la)^{-j}\|_{L^2\to L^\infty}\|D^\be(\De_\Om-\la)^{-j}\|_{L^2\to L^\infty}\\
  &\leq C|\la_\Om+\la|^{-2j+\frac12(n+|\al|+|\be|)}\,.
\end{align*}

This estimate for the kernel of $(\De_\Om-\la)^{-2j}$ readily yields the desired bounds for the Green's function. Indeed, the heat kernel $H(t,x,y)$ (that is, the integral kernel of $\e^{t\De_\Om}$) can be obtained from $K(x,y)$ as
\[
H(t,x,y)=\frac{(2j-1)!\,t^{1-2j}}{2\pi\I} \int_\Ga \e^{-t\la}K(x,y)\,d\la\,,
\]
where $\Ga$ is a contour enclosing the spectrum of $\De_\Om$. Hence the $C^2$ bound for $K(x,y)$ readily implies that
\[
|D^\al_x D^\be_yH(t,x,y)|\leq Ct^{-(n+|\al|+|\be|)/2}\,\e^{(\ep-\la_\Om) t}\,,
\]
where $\ep$ is an arbitrary positive constant. By Davies' perturbation method, this implies the Gaussian-type estimate
\[
|D^\al_x D^\be_yH(t,x,y)|\leq Ct^{-(n+|\al|+|\be|)/2}\,\e^{(\ep-\la_\Om) t}\,\e^{-C{|x-y|^2}/t}
\]
for $t>0$. Finally, the estimate for the Green's function $G_\Om(x,y)$ of the operator $\De_\Om-k^2$ follows by elementary methods from the heat kernel's Gaussian bound and the identity
\[
G_\Om(x,y)=\int_{0}^\infty\e^{-tk^2} H(t,x,y)\,dt\,.
\]
\end{proof}

In order to effectively apply the previous theorem to the study of tentacled hypersurfaces, we will need the lower bound for the eigenvalues that we shall prove in the following lemma, which will ensure the $C^2$ exponential decay at infinity of the Green's function of a tentacled domain:

\begin{lemma}\label{L.laOm}
  Let $\Om$ be either a tentacled domain in $\RR^n$ or a Riemannian product of the form $\La×(T,\infty)$ where $T\in[-\infty,\infty)$ and $\La$ is a bounded domain of $\RR^{n-1}$ with smooth boundary. Then $\la_\Om$ is strictly positive.
\end{lemma}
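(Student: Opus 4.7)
The plan is to handle both cases by reducing them to the Dirichlet Poincaré inequality on the bounded cross-sections that appear at infinity.

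First I would dispose of the product case $\Om=\La\times(T,\infty)$. Given $v\in C_0^\infty(\Om)$, for each fixed $t\in(T,\infty)$ the slice $v(\cdot,t)$ lies in $C_0^\infty(\La)$, so the Dirichlet Poincaré inequality on the bounded smooth domain $\La\subset\RR^{n-1}$ gives
\[
\int_\La v(x',t)^2\,dx' \leq \la_\La^{-1} \int_\La |\nabla_{x'}v(x',t)|^2\,dx'.
\]
Integrating in $t$ yields $\int_\Om v^2\,dx \leq \la_\La^{-1}\int_\Om |\nabla v|^2\,dx$, hence $\la_\Om\geq\la_\La>0$.

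For the tentacled case, my approach is to split $\Om$ into the bounded bulk $\Om_0:=\Om\cap H_1^-\cap\cdots\cap H_J^-$ and the tentacles $S_i$, the latter isometric to $\La_i\times(0,\infty)$, and to control $\int v^2$ separately on each piece. On each tentacle the product-case estimate applies verbatim (since $v\in C_0^\infty(\Om)$ vanishes on $\pd\La_i\times(0,\infty)\subset\pd\Om$) and gives $\int_{S_i}v^2\leq\la_{\La_i}^{-1}\int_{S_i}|\nabla v|^2$. For the bulk contribution I would consider the slightly enlarged bounded domain $\Om_0':=\Om_0\cup\bigcup_i\La_i\times[0,\vep)$ reaching a small distance $\vep>0$ into each tentacle. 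Because every $\pd\La_i$ is nonempty, the piece $\pd\La_i\times[0,\vep)$ of $\pd\Om$ sits in $\pd\Om_0'$, so $v|_{\Om_0'}$ vanishes on a set of positive $(n-1)$-Hausdorff measure, and the Poincaré--Friedrichs inequality for mixed boundary conditions yields a constant $C_0$ with $\int_{\Om_0'}v^2\leq C_0\int_{\Om_0'}|\nabla v|^2$. Combining both bounds and absorbing the overlap of the tentacle pieces with $\Om_0'$ into a factor of $2$, I obtain $\la_\Om\geq\bigl[2\max(C_0,\max_i\la_{\La_i}^{-1})\bigr]^{-1}>0$.

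The only step warranting verification is the Poincaré--Friedrichs inequality on $\Om_0'$, which holds in any Lipschitz domain once the Dirichlet portion of the boundary has positive measure---automatic here because each $\La_i$ is a bounded smooth domain with nonempty boundary. The corner cases are immediate: if $\Om$ has no bulk then $\Om$ is a single tentacle (handled by the product case), and if $\Om$ has no tentacles then $\Om$ is bounded and the classical Poincaré inequality applies. I expect no serious obstacle; the only conceptual point is the need to thicken $\Om_0$ into $\Om_0'$ so as to pick up a Dirichlet piece of $\pd\Om$, which is essential because $\pd\Om_0\cap\pd\Om$ might otherwise be empty.
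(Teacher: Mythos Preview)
Your argument is correct and takes a genuinely different route from the paper's proof. The paper handles the tentacled case by spectral-theoretic means: it observes that $0$ cannot be a Dirichlet eigenvalue and then invokes the decomposition principle (stability of the essential spectrum under removal of a relatively compact set) to reduce $\sigma_{\mathrm{ess}}(-\Delta_\Omega)$ to the union of the Dirichlet spectra of the individual tentacles $S_i\cong\Lambda_i\times(0,\infty)$, each of which has positive infimum by the product case. Your approach is instead a direct variational estimate: you split the Rayleigh quotient geometrically and apply Poincar\'e-type inequalities on each piece, the only nontrivial point being the thickening $\Omega_0\leadsto\Omega_0'$ so that the bulk piece inherits a Dirichlet portion of $\partial\Omega$ of positive measure. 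Your method is more elementary and self-contained (no Weyl-type perturbation theory needed) and even yields an explicit lower bound for $\lambda_\Omega$ in terms of $\lambda_{\Lambda_i}$ and the Poincar\'e--Friedrichs constant of $\Omega_0'$; the paper's argument is slicker and adapts more readily to the periodic setting of Lemma~\ref{L.periodic}, where Floquet theory plays the role of the decomposition principle.
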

\begin{proof}
  It is a simple matter to show that $\la_{\La×(T,\infty)}$ equals the lowest Dirichlet eigenvalue~$\la_\La$ of the bounded domain $\La\subset\RR^{n-1}$, which is positive. Hence let us now assume that $\Om$ is a tentacled domain. Since~$0$ cannot be a Dirichlet eigenvalue of $\Om$, the result will follow once we show that~$0$ does not belong to the essential spectrum $\si_{\rm ess}(-\De_\Om)$ of the Laplacian on $\Om$ with Dirichlet boundary conditions. However, it is well known that $\si_{\rm ess}(-\De_\Om)= \si_{\rm ess}(-\De_{\Om\minus K})$ for any relatively compact set $K\subset\RR^n$ with smooth boundary. Therefore, taking $K:=\Om\cap H_1^-\cap\cdots \cap H_J^-$, in the notation of Definition~\ref{D.tentacled}, we find that
  \[
\si_{\rm ess}(-\De_\Om)=\si_{\rm ess}(-\De_{\bigcup_{i=1}^N S_i})= \bigcup_{i=1}^N\si_{\rm ess}(-\De_{S_i})\,.
\]
As the tentacle $S_i$ is isometric to the product $\La_i×(0,\infty)$, it follows from our first observation that each $\la_{S_i}$ is positive, thus completing the proof of the lemma.
\end{proof}

Before stating this section's main theorem, we need to establish some Green's function estimates for later use, the basic philosophy of which is to compare the Green's function $G_\Om(x,y)$ of a tentacled domain with that of a suitable domain with an Euclidean symmetry. To begin with, in the following lemma we will compare $G_\Om(x,y)$ with the Green's function $G_{S_i}(x,y)$ of the tentacle $S_i$ when the points $x$ and $y$ belong to this tentacle:

\begin{lemma}\label{L.GS}
  Let $\Om$ be a tentacled domain.  Then the pointwise estimate
  \begin{multline*}
     \big| D_x^\al D_y^\be G_\Om(x,y)-D_x^\al D_y^\be G_{S_i}(x,y)\big|\leq C_1 f_{\al\be}(x,y)\,
    \e^{-C_2[\dist(x,\La_i)+\dist(y,\La_i)]}
\end{multline*}
holds for all $|\al|,|\be|\leq 2$ whenever both points $x$ and $y$ lie in the tentacle $S_i$. Here $C_1$ and $C_2$ are positive constants and we have set
\begin{multline*}
f_{\al\be}(x,y):=\min\big\{\dist(x,\La_i)^{1-n-|\al|}\dist(y,\La_i)^{2-n-|\be|},\\ \dist(x,\La_i)^{2-n-|\al|}\dist(y,\La_i)^{1-n-|\be|}\big\}\,.
\end{multline*}
\end{lemma}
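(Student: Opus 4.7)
The plan is to view the difference $h(x,y) := G_\Om(x,y) - G_{S_i}(x,y)$ as the solution of a Dirichlet problem on the tentacle $S_i$. For fixed $y \in S_i$ the Dirac singularities cancel, so $(\De_x - k^2)h(x,y)=0$ in $S_i$; both Green's functions vanish on the lateral boundary $\pd S_i \cap \pd\Om$, hence so does $h$; and on the cap $\La_i$, which is interior to $\Om$ but part of $\pd S_i$, one has $G_{S_i}(\cdot,y)\equiv 0$, so that $h(\cdot,y)|_{\La_i} = G_\Om(\cdot,y)|_{\La_i}$. Thus $h(\cdot,y)$ satisfies the Yukawa equation on $S_i$ with Dirichlet data supported on the compact cap $\La_i$, the data itself being the restriction of $G_\Om(\cdot, y)$.

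From this I would pass to the Poisson representation
\[
h(x,y) \;=\; \int_{\La_i} P_{S_i}(x,z)\, G_\Om(z,y)\, dS(z), \qquad P_{S_i}(x,z) := -\pd_{\nu_z}G_{S_i}(x,z),
\]
differentiate under the integral, and estimate the two factors separately via Theorem~\ref{T.Jap}. Here Lemma~\ref{L.laOm} is crucial, since it guarantees that $\la_\Om$ and $\la_{S_i}$ are both strictly positive; choosing $\ep$ small enough in Theorem~\ref{T.Jap} then converts the spectral gap into genuine exponential decay for the Green's functions and their first two derivatives. Using $|x-z|\geq\dist(x,\La_i)$ and $|z-y|\geq\dist(y,\La_i)$ for every $z\in\La_i$ would give the pointwise bounds
\[
|D^\al_x P_{S_i}(x,z)| \leq C\,\dist(x,\La_i)^{1-n-|\al|}\,\e^{-c\,\dist(x,\La_i)},
\]
\[
|D^\be_y G_\Om(z,y)| \leq C\,\dist(y,\La_i)^{2-n-|\be|}\,\e^{-c\,\dist(y,\La_i)}.
\]
Since $\La_i$ is compact by part~(iii) of Definition~\ref{D.tentacled}, the surface integral then contributes only a bounded factor, yielding the first of the two terms inside the minimum defining $f_{\al\be}$.

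To obtain the second term I would appeal to the symmetry of the Green's functions asserted in Proposition~\ref{P.Green}, which gives $h(x,y)=h(y,x)$ and hence the alternative representation
\[
h(x,y) \;=\; \int_{\La_i} P_{S_i}(y,z)\, G_\Om(x,z)\, dS(z).
\]
Running the identical argument with the roles of $x$ and $y$ swapped produces the companion estimate $\dist(x,\La_i)^{2-n-|\al|}\dist(y,\La_i)^{1-n-|\be|}\,\e^{-c[\dist(x,\La_i)+\dist(y,\La_i)]}$. Taking the minimum of the two bounds reproduces $f_{\al\be}(x,y)$, which finishes the proof.

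The step I expect to demand most care is the justification of the Poisson representation on the unbounded domain $S_i$, since the usual derivation via Green's second identity requires compactness. I would handle this by truncating the tentacle to $\La_i\times(0,T)$, applying Green's identity on that bounded $C^\infty$ subdomain, and letting $T\to\infty$. The contribution from the far end $\La_i\times\{T\}$ vanishes in the limit thanks to the exponential decay provided by Theorem~\ref{T.Jap} on the product geometry of the tentacle, which dominates any polynomial growth from the surface measure; the strict positivity of $\la_{S_i}$ furnished by Lemma~\ref{L.laOm} is precisely what makes this decay nontrivial.
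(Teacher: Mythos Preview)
Your proposal is correct and follows essentially the same route as the paper: the Poisson representation you write down is exactly the identity the paper derives via Green's second formula applied to $G_{S_i}(\cdot,x)$ and $G_\Om(\cdot,y)$ on $S_i$, and the subsequent estimation via Theorem~\ref{T.Jap}, Lemma~\ref{L.laOm}, and the symmetry swap to obtain the second term in the minimum are identical. If anything, your explicit truncation argument for justifying the boundary representation on the unbounded tentacle is more careful than the paper, which applies Green's identity on $S_i$ without comment.
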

\begin{proof}
  Let us take two distinct points $x,y$ in the tentacle $ S_i$ and apply Green's identity to $G_{S_i}(\cdot,x)$ and $G_\Om(\cdot,y)$ in this tentacle to derive the expression
\begin{equation}\label{ident}
G_\Om(x,y)-G_{S_i}(x,y)=\int_{\La_i}G_\Om(z,y)\,\nu_i(z)\cdot \nabla_zG_{S_i}(z,x)\,d\si(z)\,.
\end{equation}
Here $d\si$ stands for the induced hypersurface measure on the `tentacle cap' $\La_i$, $\nu_i$ is the outer unit normal at $\La_i$ and we have used that $G_{S_i}(\cdot,x)=0$ on $\pd S_i$ and $G_\Om(\cdot,y)=0$ on $\pd S_i\minus\La_i$.

Taking derivatives with respect to $x$ and $y$ in the identity~\eqref{ident} and using Theorem~\ref{T.Jap}, one readily finds that
\begin{align*}
  \big| D_x^\al D_y^\be G_\Om(x,y)-D_x^\al D_y^\be & G_{S_i}(x,y)\big|\leq  \int_{\La_i}\big|D_y^\be G_\Om(z,y)\big|\, \big|\nabla_zD_x^\al G_{S_i}(z,x)\big|\,d\si(z)\\
  &\leq C_3\int_{\La_i} \frac{\e^{C_4[\ep-(\la_\Om+k^2)^{1/2}]|z-y|+C_4[\ep-(\la_{S_i}+k^2)^{1/2}]|z-x|}}{|z-y|^{n+|\be|-2}|z-x|^{n+|\al|-1}} \,d\si(z)\\
  &\leq \frac{C_1\e^{C_4[\ep-(\la_\Om+k^2)^{1/2}]\dist(y,\La_i)+C_4[\ep-(\la_{S_i}+k^2)^{1/2}]\dist(x,\La_i)}}{\dist(y,\La_i)^{n+|\be|-2}\dist(x,\La_i)^{n+|\al|-1}}
\end{align*}
for $|\al|,|\be|\leq 2$. As $C_2:=C_4 \min\{(\la_\Om+k^2)^{1/2}-\ep,(\la_{S_i}+k^2)^{1/2}-\ep\}$ can be taken positive even if $k=0$ by Lemma~\ref{L.laOm}, the above inequality proves the lemma with
\[
f_{\al\be}(x,y)=\dist(x,\La_i)^{1-n-|\al|} \dist(y,\La_i)^{2-n-|\be|}\,.
\]
To show that this estimate also holds with
\[
f_{\al\be}(x,y)=\dist(x,\La_i)^{2-n-|\al|} \dist(y,\La_i)^{1-n-|\be|}\,,
\]
thereby completing the proof of the lemma, it suffices to exchange $x$ and $y$ in Eq.~\eqref{ident} by the symmetry of the Green's functions and repeat the argument.
\end{proof}

In the following lemma we will prove the exponential decay of the Green's function $G_\Om(x,y)$ when the points $x$ and $y$ lie in distinct tentacles:

\begin{lemma}\label{L.GOmS}
  Let $x\in S_i$ and $y\in S_j$ be points lying in distinct tentacles of a tentacled domain $\Om$. Then the Green's function $G_\Om(x,y)$ decays as
  \[
\big|D_x^\al D_y^\be G_\Om(x,y)\big|\leq C_1\dist(x,\La_i)^{1-n-|\al|}\dist(y,\La_i)^{2-n-|\be|} \e^{-C_2 [\dist(x,\La_i)+\dist(y,\La_i)]}
  \]
  for some positive constants $C_1,C_2$ and all $|\al|,|\be|\leq2$ .
\end{lemma}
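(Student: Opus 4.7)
The plan is to adapt the strategy of the preceding Lemma~\ref{L.GS}: express $G_\Om(x,y)$ as a boundary integral over the cap $\La_i$ via Green's identity on the tentacle $S_i$, then bound the integrand using the pointwise estimate of Theorem~\ref{T.Jap}, exploiting the positivity of $\la_\Om$ and $\la_{S_i}$ guaranteed by Lemma~\ref{L.laOm}.

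First, I would apply Green's identity to $G_{S_i}(\cdot,x)$ and $G_\Om(\cdot,y)$ on the tentacle $S_i$. Since $y\in S_j$ with $j\neq i$, the point $y$ does \emph{not} lie in $S_i$, so there is no interior contribution from the $\delta_y$ source of $G_\Om(\cdot,y)$; one gets
\[
G_\Om(x,y)=\int_{\La_i} G_\Om(z,y)\,\nu_i(z)\cdot\nabla_z G_{S_i}(z,x)\,d\sigma(z)\,,
\]
exactly as in \eqref{ident}, using that $G_{S_i}(\cdot,x)$ vanishes on all of $\pd S_i$ while $G_\Om(\cdot,y)$ vanishes on $\pd S_i\minus\La_i\subset\pd\Om$. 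Since $G_{S_i}(z,x)$ does not depend on $y$ and $G_\Om(z,y)$ does not depend on $x$, I can then differentiate under the integral sign to obtain
\[
D_x^\al D_y^\be G_\Om(x,y)=\int_{\La_i}D_y^\be G_\Om(z,y)\,\nu_i(z)\cdot\nabla_z D_x^\al G_{S_i}(z,x)\,d\sigma(z)\,.
\]

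Next, I would estimate the two factors in the integrand separately using Theorem~\ref{T.Jap}, choosing $\ep$ small enough that $C_2:=C'\min\{(\la_\Om+k^2)^{1/2},(\la_{S_i}+k^2)^{1/2}\}-C'\ep$ is strictly positive (this is where Lemma~\ref{L.laOm} is essential, so that the argument works even when $k=0$). For $z\in\La_i$, the boundedness of $\La_i$ gives $|z-x|\asymp\dist(x,\La_i)$, so Theorem~\ref{T.Jap} applied to $G_{S_i}$ yields
\[
\bigl|\nabla_z D_x^\al G_{S_i}(z,x)\bigr|\leq C\,\dist(x,\La_i)^{1-n-|\al|}\,\e^{-C_2\dist(x,\La_i)}
\]
uniformly in $z\in\La_i$. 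For the other factor, the key geometric observation is that, because the tentacle $S_j$ is isometric to $\La_j\times(0,\infty)$ with $\La_j$ meeting $\pd\Om$ orthogonally, any point $z$ outside $S_j$ (in particular any $z\in\La_i$) satisfies $|z-y|\geq\dist(y,\La_j)$ for $y\in S_j$, and in fact $|z-y|\asymp\dist(y,\La_j)+\mathrm{const}$. Thus Theorem~\ref{T.Jap} applied to $G_\Om$ gives
\[
\bigl|D_y^\be G_\Om(z,y)\bigr|\leq C\,\dist(y,\La_j)^{2-n-|\be|}\,\e^{-C_2\dist(y,\La_j)}
\]
uniformly in $z\in\La_i$.

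Finally, multiplying these two bounds and integrating over the bounded cap $\La_i$ yields the desired estimate, with an extra factor equal to the $(n-2)$-dimensional area of $\La_i$ absorbed into $C_1$. The main obstacle is the geometric lower bound $|z-y|\geq\dist(y,\La_j)$: it relies on the rigid cylindrical structure of the tentacles $S_j\cong\La_j\times(0,\infty)$ and the orthogonal attachment stipulated in Definition~\ref{D.tentacled}(iv), without which one could only obtain the weaker $|z-y|\geq\dist(y,\La_j)-\mathrm{const}$ and would need to absorb a bounded factor into $C_1$ (which is in any case harmless). I note that the statement as written has $\dist(y,\La_i)$ in the right-hand side, which is surely a typo for $\dist(y,\La_j)$; my proof gives the latter.
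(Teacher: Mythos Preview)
Your approach is essentially identical to the paper's: the same Green's identity on $S_i$ giving the same boundary integral over $\La_i$, followed by the same application of Theorem~\ref{T.Jap} and Lemma~\ref{L.laOm}. There is one point worth correcting, however. You assert that the appearance of $\dist(y,\La_i)$ in the statement is ``surely a typo'' for $\dist(y,\La_j)$, but it is not: the paper really means $\La_i$, and its proof uses the trivial lower bound $|z-y|\geq\dist(y,\La_i)$, which holds simply because $z\in\La_i$. This makes the paper's argument slightly shorter than yours, since no appeal to the rigid cylindrical structure of $S_j$ is needed at this step. Your geometric argument does yield a valid alternative estimate with $\dist(y,\La_j)$ in place of $\dist(y,\La_i)$, and for the application in Theorem~\ref{T.finite} (bounding the third integral in~\eqref{roll}) either version suffices; but you should not describe the paper's version as an error.
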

\begin{proof}
  Given $x\in S_i$ and $y\in S_j$, one can apply Green's identity to $G_{S_i}(\cdot,x)$ and $G_\Om(\cdot,y)$ in $S_i$ to obtain the formula
\[
G_\Om(x,y)=\int_{\La_i} G_\Om(z,y)\, \nu_i(z)\cdot \nabla_zG_{S_i}(z,x)\, d\si(z)\,.
\]
Taking derivatives with respect to $x$ and $y$ and using Theorem~\ref{T.Jap}, an argument as in Lemma~\ref{L.GS} yields
\begin{align*}
  \big| D_x^\al D_y^\be &G_\Om(x,y)\big|\leq  \int_{\La_i}\big|D_y^\be G_\Om(z,y)\big|\, \big|\nabla_zD_x^\al G_{S_i}(z,x)\big|\,d\si(z)\\
  &\leq C_3\int_{\La_i} \frac{\e^{C_4[\ep-(\la_\Om+k^2)^{1/2}]|z-y|+C_4[\ep-(\la_{S_i}+k^2)^{1/2}]|z-x|}}{|z-y|^{n+|\be|-2}|z-x|^{n+|\al|-1}} \,d\si(z)\\
  &\leq \frac{C_1\e^{C_4[\ep-(\la_\Om+k^2)^{1/2}\dist(y,\La_i)]+C_4[\ep-(\la_{S_i}+k^2)^{1/2}]\dist(x,\La_i)}}{ \dist(x,\La_i)^{n+|\al|-1}\dist(y,\La_i)^{n+|\be|-2}}\,.
\end{align*}
  The claim then follows by noticing that $C_2:=C_4 \min\{(\la_\Om+k^2)^{1/2}-\ep,(\la_{S_i}+k^2)^{1/2}-\ep\}$ can be taken positive even for $k=0$ by Lemma~\ref{L.laOm}.
\end{proof}

We shall next prove the main result of this section, namely, that any tentacled hypersurface can be transformed by a small diffeomorphism into a level set of a global solution to the equation $(\De-k^2)u=0$. The proof follows the strategy we outlined in Section~\ref{S.guide}, using the above estimates for the Green's functions to ensure that one can define a local solution of the equation that has a level set diffeomorphic to the tentacled hypersurface and satisfies the hypotheses of the $C^1$ stability theorem:

\begin{theorem}\label{T.finite}
  Let $L\subset\RR^n$ be a (possibly disconnected) tentacled hypersurface of finite type and let $k$ be a real constant. Then one can transform the hypersurface $L$ by a diffeomorphism $\Phi$ of $\RR^n$, as close to the identity as we wish in the $C^1$ norm, so that $\Phi(L)$ is a union of connected components of a level set $u^{-1}(c_0)$ of a solution of the equation $(\De-k^2)u=0$ in $\RR^n$.
\end{theorem}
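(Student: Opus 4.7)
The plan is to follow the strategy outlined in Section~\ref{S.guide}: build a \emph{local} solution $v$ of $(\De-k^2)v=0$ in a one-sided neighborhood of $L$ whose zero set contains $L$ and which satisfies the hypotheses of the noncompact stability result (Corollary~\ref{C.Thom}); then approximate $v$ in the $C^1$ norm by a global solution $u$ via Theorem~\ref{T.approx}; and finally apply Corollary~\ref{C.Thom} to produce the desired $C^1$-small diffeomorphism $\Phi$. Since $L$ has finitely many components and the argument is local at each, I may assume $L=\pd\Om$ for a single tentacled domain $\Om$ with rigid tentacles $S_1,\dots,S_N$, each isometric to $\La_i\times(0,\infty)$.

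For the local construction, fix an interior point $p_i\in\La_i$ in every cap and set $\ell_i:=\{p_i\}\times[0,\infty)\subset S_i$, with $\mu_i$ its length measure; then define
\[
v(x):=\sum_{i=1}^N\int_{\ell_i}G_\Om(x,y)\,d\mu_i(y).
\]
Because $G_\Om(\cdot,y)$ vanishes on $\pd\Om$, we have $v\equiv 0$ on $L$, and $(\De-k^2)v=0$ on $\Om\setminus\bigcup_i\ell_i$. Extending $v$ smoothly across $L$ to a two-sided open neighborhood $U$ of $L$ disjoint from the $\ell_i$, I will apply Corollary~\ref{C.Thom} with $V$ a thin one-sided collar of $L$ inside $\Om$, and with the auxiliary map $g:=v-c$ for a sufficiently small $c>0$, so that the interpolating family takes the form $v-tc$ and each component meeting $\overline V$ stays there.

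The crux is the verification of conditions (i) and (ii) of Corollary~\ref{C.Thom}, i.e.\ that $|\nabla v|$ is uniformly bounded below on $\overline V$ and that no spurious component of $\{v=tc\}$ enters $\overline V$ from outside. On the compact part of $\Om$, the Hopf lemma combined with the strict positivity of $v$ in $\Om$ (minimum principle for $\De-k^2$, with $k^2\geq 0$) yields a positive normal derivative on $L$. On each tentacle $S_i$, Lemma~\ref{L.GS} replaces $G_\Om$ by the product-domain Green's function $G_{S_i}$ up to an exponentially small error in $C^2$, while Lemma~\ref{L.GOmS} shows that contributions coming from the integrals over $\ell_j$ with $j\neq i$ decay exponentially in the axial distance along $S_i$. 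Hence, up to $C^2$-exponentially small corrections, $v$ on $S_i$ agrees with the function $\int_0^\infty G_{S_i}(x,(p_i,t))\,dt$, which by separation of variables on $\La_i\times(0,\infty)$ and the strict positivity of $\la_{\La_i}$ (Lemma~\ref{L.laOm}) has a \emph{uniformly positive} transverse derivative along $\pd\La_i\times(0,\infty)$. This provides the gradient bound on $\overline V$ and simultaneously shows that, for $c>0$ small enough, the component of $\{v=tc\}$ near $L$ is a graph over $L$ confined to $\overline V$, as needed.

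Once (i) and (ii) are in place, $\overline V$ is closed with connected complement (shrink $V$ if necessary), so Theorem~\ref{T.approx} produces a global solution $u$ of $(\De-k^2)u=0$ on $\RR^n$ with $\|u-v\|_{C^1(\overline V)}$ arbitrarily small; the uniform $C^2$ bounds on $v$ coming from Theorem~\ref{T.Jap} and on $u$ by interior elliptic regularity allow Corollary~\ref{C.Thom} to deliver a $C^1$-small diffeomorphism $\Phi$ sending $L$ onto a union of components of $u^{-1}(c)$. The main obstacle, as anticipated in Section~\ref{S.guide}, is the uniform gradient lower bound on the entire saturated half-neighborhood at infinity: everything else is either a routine use of the approximation and stability machinery already set up, or the asymptotic comparison with product Green's functions made quantitative by the $C^2$ estimates of Theorem~\ref{T.Jap} and Lemmas~\ref{L.GS}--\ref{L.GOmS}.
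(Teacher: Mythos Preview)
Your overall architecture is right, and the asymptotic analysis on the tentacles is essentially the paper's. There is, however, a genuine gap in the order of operations at the approximation step.

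You propose to apply Theorem~\ref{T.approx} to $v$ on the closed one-sided collar $\overline V$, which contains $L=\pd\Om$. But Theorem~\ref{T.approx} requires $(\De-k^2)v=0$ to hold in an \emph{open} set containing the closed set, and your smooth extension of $v$ across $L$ does not satisfy the equation on the exterior side. Hence Theorem~\ref{T.approx} is not applicable to $v$ on $\overline V$ as written. The paper handles this by a two-step use of the stability machinery: first apply Corollary~\ref{C.Thom} (with $f=\bar v$, $g=\bar v-c_0$) to push $L$ to the \emph{interior} level set $v^{-1}(c_0)$ for some small $c_0>0$; around this new hypersurface $v$ satisfies the equation in a genuine two-sided saturated neighborhood $U\subset\Om$, so Theorem~\ref{T.approx} now applies to produce $u$ with $\|u-v\|_{C^2(U)}$ small; finally Theorem~\ref{T.stab} (not Corollary~\ref{C.Thom}) pushes $v^{-1}(c_0)$ to a component of $u^{-1}(c_0)$. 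The composition of the two small diffeomorphisms is the desired $\Phi$. Your single application of Corollary~\ref{C.Thom} cannot replace this, because the approximation must be inserted between the two stability steps.

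A secondary point: on the tentacle $S_i$ you compare $v$ with $\int_0^\infty G_{S_i}(\cdot,(p_i,t))\,dt$, which is \emph{not} translation-invariant in the axial variable, so your assertion of a uniform transverse-derivative bound ``by separation of variables'' needs an actual computation (it converges to $G_{\La_i}(\cdot,p_i)$ only as the axial coordinate tends to infinity). The paper sidesteps this by comparing instead with the full-cylinder full-line object $\tv_i(y+t\nu_i)=\int_{\RR}G_{\tS_i}(\cdot,(y_i,s))\,ds=G_{\La_i}(y,y_i)$, which is exactly axially invariant; Hopf's lemma on the compact cap $\La_i$ then gives the uniform gradient bound immediately, and the extra error terms (half-line versus full line, $G_{S_i}$ versus $G_{\tS_i}$) are absorbed into the exponential estimates of Lemmas~\ref{L.GS}--\ref{L.GOmS}.
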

\begin{proof}
  We start by showing that, given any connected component $L_0$ of the hypersurface $L$, there exists a local solution of the equation having a level set diffeomorphic to this component. To this end, let us denote by $\Om$ the tentacled domain whose boundary is $L_0$. We will keep the notation $S_i$ (with $1\leq i\leq N$) for the tentacles of the domain $\Om$ (as in Definition~\ref{D.tentacled}), which can be characterized as
  \begin{equation}\label{Si}
S_i:=\big\{y+t\nu_i:y\in \La_i,\; t>0\big\}\,.
\end{equation}
Here the constant vector $\nu_i$ is the outer unit normal at the tentacle cap $\La_i$.
  
The construction of the desired local solution will make use of the Green's function $G_\Om(x,y)$ of the domain $\Om$. To ensure that the local solution satisfies the hypotheses of the $C^1$ noncompact stability theorem, it is convenient to start by considering a straight half-line $\ga_i$ in each tentacle. That is, we fix a point $y_i$ in each tentacle cap $\La_i$ and define the half-line $\ga_i$ as
\begin{equation}\label{gai}
\ga_i:=\big\{y_i+t\nu_i:t>1\big\}\,.
\end{equation}
The length measure on $\ga_i$ will be denoted by $\mu_i$. A sketch of
many of the geometric objects that appear in the proof of this theorem is given in
Figure~\ref{F.proof}.

\begin{figure}[t]
  \centering
  {\psfrag{Si}{$S_i$}
    \psfrag{Ci}{$\ga_i$} \psfrag{yi}{$y_i$} \psfrag{Lai}{$\La_i$}
    \psfrag{V0}{$V$} \psfrag{vc0}{$v^{-1}(c_0)$} \psfrag{U}{$U$}
  \includegraphics[scale=0.2,angle=0]{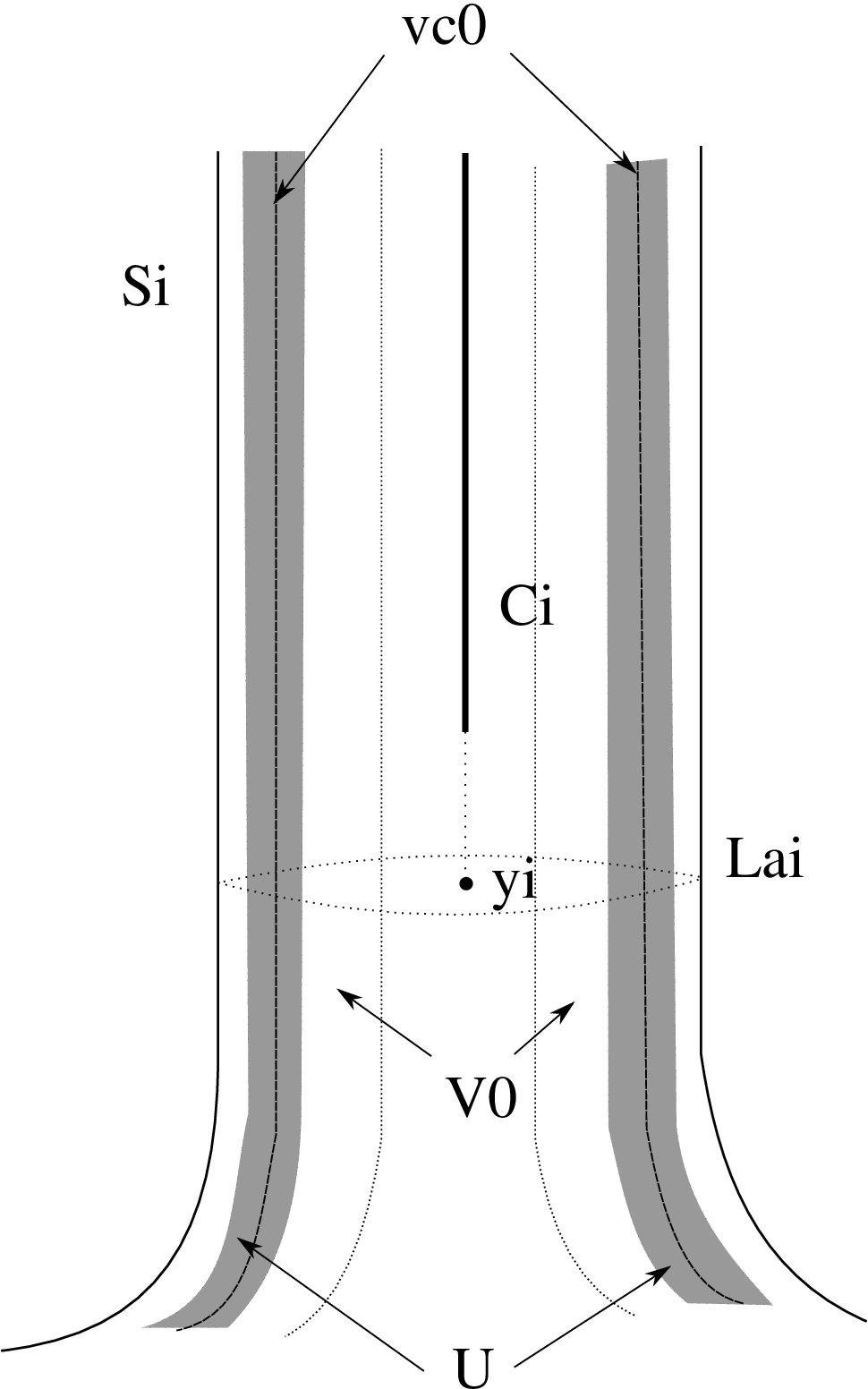}}
\caption{Definition of the half-line $\ga_i$ and the neighborhoods $V$
  and $U$ corresponding to a tentacle $S_i$.} 
\label{F.proof}
\end{figure}

The local solution will be constructed later on using the positive function
\begin{equation*}
v_i(x):=\int_{\ga_i}G_\Om(x,y)\,d\mu_i(y)\,.
\end{equation*}
As a consequence of the estimates for the Green's function we proved in Theorem~\ref{T.Jap} and Lemma~\ref{L.laOm}, one can readily check that the function $v_i$ satisfies
\begin{align}
  v_i(x)&= \int_1^\infty G_\Om(x,y_i+t\nu_i)\,dt\leq C'_1\int_1^\infty |x-y_i-t\nu_i|^{2-n}\e^{-C'_2|x-y_i-t\nu_i|}\,dt\notag \\
  &\leq C'_1 \int_{-\infty}^\infty \big(\dist(x,\ga_i)^2+\tau^2\big)^{1-\frac n2}\e^{-C'_2(\dist(x,\ga_i)^2+\tau^2)^{1/2}}\,d\tau\notag \\
  &\leq 2C'_1\int_{\dist(x,\ga_i)}^\infty s^{2-n}\e^{-C'_2s}\,ds\notag \\
  &\leq  \begin{cases}
    C'_3\dist(x,\ga_i)^{3-n}\e^{-C'_2\dist(x,\ga_i)}&\text{if }n\geq4,\\
    C'_3(1+|\log\dist(x,\ga_i)|)\,\e^{-C'_2\dist(x,\ga_i)/2} &\text{if }n=3
  \end{cases}\label{vi}
\end{align}
for some positive constants $C'_j$. In the above inequalities, the variables $t$ and $\tau$ are related by $\tau:=t-|y_i-Y_i(x)|$, where $Y_i(x)$ is the unique point of $\ga_i$ such that $|x-Y_i(x)|=\dist(x,\ga_i)$, and $s:=(\dist(x,\ga_i)^2+\tau^2)^{1/2}$. By the above estimates,  $v_i$ is well defined; indeed, it can be readily checked that it is of class $C^\infty(\BOm\minus\ga_i)\cap L^1\loc(\Om)$ and satisfies the equation
\begin{equation*}
  (k^2-\De)v_i=\mu_i
\end{equation*}
and the boundary condition $v_i|_{L_0}=0$.

Our desired local solution will be the sum 
\[
v:=\sum_{i=1}^Nv_i\,,
\]
which is smooth in the closure of the domain $\Om$ minus the union of all the half-lines $\ga_i$ and satisfies the equation
\[
(k^2-\De)v=\sum_{i=1}^N\mu_i
\]
with boundary condition $v|_{L_0}=0$. Our next goal is to show that the function $v$ satisfies the saturation, gradient and $C^2$-boundedness conditions of the $C^1$ noncompact stability theorem. That is, we claim that there exists a half-neighborhood $V\subset \Om$ of the component $L_0$ and a positive constant $\eta$ such that the function
$v$ satisfies the gradient condition
\begin{equation}\label{gradv}
  |\nabla v|\geq C>0
\end{equation}
in a set $V$ that is saturated by $v$ in the sense that any component of $v^{-1}(c)$ connected with $V$ is contained in $V$ for all $0<c<\eta$, and that moreover the second-order derivatives of the function $v$ are bounded in $V$.

In order to prove this claim, we introduce the auxiliary function
\begin{equation}\label{tildevi}
\tv_i(x):=\int_{\tilde\ga_i}G_{\tS_i}(x,y)\,d\tilde\mu_i(y)
\end{equation}
which is of class $C^\infty(\tS_i\minus\tilde\ga_i)\cap L^1\loc(\tS_i)$ by the same argument leading to the estimate~\eqref{vi}. Here we are respectively denoting by
\[
\tS_i:=\big\{y+t\nu_i:y\in \La_i,\; t\in\RR\big\}\quad\text{and}\quad \tilde\ga_i:=\big\{y_i+t\nu_i: t\in\RR\big\}
\]
the cylinder and straight line corresponding to the tentacle $S_i$ and to the half-line $\ga_i$, while $\tilde\mu_i$ stands for the length measure on the line $\tilde\ga_i$.

A simple symmetry argument shows that the value of the function $\tv_i$ at an arbitrary point of the cylinder $\tS_i$ can be expressed in terms of the Green's function $G_{\La_i}$ of the tentacle cap via
\begin{equation}\label{twG1}
\tv_i(y+t\nu_i)=G_{\La_i}(y,y_i)\,,
\end{equation}
where we are parametrizing the points in the cylinder as $y+t\nu_i$ as we did in Eq.~\eqref{Si} and we naturally identify the cap $\La_i$ with a bounded domain of $\RR^{n-1}$ using the property~(iii) of Definition~\ref{D.tentacled}. The normal derivative of the function $\tv_i$ at the boundary of the cylinder $\tS_i$ can similarly computed using the symmetry as
\begin{equation}\label{twG2}
\pd_\nu\tv_i(y+t\nu_i)=\pd_\nu G_{\La_i}(y,y_i)\,,
\end{equation}
where $y$ is any point in the boundary of the cap $\La_i$ and $t\in\RR$.

By Hopf's boundary point lemma~\cite{GT98}, it follows that the above normal derivative $\pd_\nu G_{\La_i}(\cdot,y_i)$ is strictly negative, so the boundedness of the cap $\La_i$ allows us to infer that 
\[
\big|\nabla G_{\La_i}(\cdot,y_i)\big|\geq C_1>0
\]
on a half-neighborhood $U_i\subset\La_i$ of the boundary $\pd\La_i$. Using again the fact that the cap $\La_i$ is bounded, it is standard that this set $U_i$ can be safely assumed to be saturated by the function $G_{\La_i}(\cdot,y_i)$, meaning that for any $c$ at most one connected component of the level sets $\{y\in\La_i:G_{\La_i}(y,y_i)=c\}$ intersects $U_i$ and that this component is actually contained in $U_i$. By the symmetry conditions~\eqref{twG1} and \eqref{twG2}, this ensures that there exists a positive constant $\eta_1$ such that the gradient condition
\begin{equation}\label{tvC}
\big|\nabla \tv_i\big|\geq C_2>0
\end{equation}
holds in the set $\tilde V_i:=\{y+t\nu_i: y\in U_i,\; t\in\RR\}$. It should be noticed that all the derivatives of $\tv_i$ are obviously bounded in $\tilde V_i$ by symmetry. By the definition of the half-neighborhoods $U_i$, the set $\tilde V_i$ contains a unique component of the level set $\tv_i^{-1}(c)$ for all values $0<c<\eta_1$. Therefore, the above discussion shows that the auxiliary function $\tv_i$ satisfies the requirements of the $C^1$ noncompact stability theorem.

Motivated by this, our next step towards proving that the solution $v$ also satisfies the conditions of the stability theorem is to control the difference between the local solution $v$ and the auxiliary function $\tv_i$ in the tentacle $S_i$. To this end, let us take a point $x\in S_i$ and estimate this difference as
\begin{align}
  \big|D^\al v(x)&- D^\al\tv_i(x)\big|\leq \int_{\ga_i}\big|D_x^\al G_\Om(x,y)-D_x^\al G_{S_i}(x,y)\big|\,d\mu_i(y)\notag\\
  & +\int_{\ga_i}\big|D_x^\al G_{\tS_i}(x,y)-D_x^\al G_{S_i}(x,y)\big|\,d\mu_i(y)\notag\\
  &+\sum_{1\leq j\neq i\leq N} \int_{\ga_j}\big|D_x^\al G_\Om(x,y)\big|\,d\mu_j(y)+ \int_{\tilde\ga_i\minus\ga_i}\big|D_x^\al G_{\tS_i}(x,y)\big|\,d\tilde\mu_i(y)\,.\label{roll}
\end{align}
Since $\tS_i$ is  a tentacled domain itself, one can now apply Lemmas~\ref{L.GS} and~\ref{L.GOmS} to obtain that for $|\al|\leq 2$ the first three integrals can be upper bounded by the exponential $C_3\,\e^{-C_4\dist(x,\La_i)}$, where $C_j$ are positive constants. To estimate the last integral, let us denote by $\tilde y_i:=y_i+\nu_i$ the endpoint of the half-line $\ga_i$ and apply Theorem~\ref{T.Jap} and Lemma~\ref{L.laOm} to derive that
\begin{align*}
  \int_{\tilde\ga_i\minus\ga_i}\big|D_x^\al G_{\tS_i}(x,y)\big|\,d\tilde\mu_i(y)&\leq \int_{\tilde\ga_i\minus\ga_i} \frac{C_5\,\e^{-C_6|x-y|}}{|x-y|^{n+|\al|-2}}\,d\tilde\mu_i(y)\leq C_7\,\e^{-C_8|x-\tilde y_i|}
\end{align*}
whenever the distance from the point $x$ to the cap $\La_i$ is greater than $2$ and $|\al|\leq2$. Hence we obtain the pointwise $C^2$ estimate
\begin{equation}\label{exp}
\max_{|\al|\leq 2}\big|D^\al v(x)- D^\al\tv_i(x)\big|\leq C_9\,\e^{-C_{10}|x|}\,,
\end{equation}
which holds in the tentacle $S_i$ provided that $|x|$ is large enough.

Armed with these preliminary results, we can prove that the local solution $v$ satisfies the hypotheses of the $C^1$ noncompact stability theorem. As a first observation, notice that, the domain $\Om$ having a finite number of ends $N$, the gradient bound~\eqref{tvC} and the $C^2$ estimate~\eqref{exp} imply that there is a positive constant $\eta_2$ and a compact subset $K$ of $\RR^n$ such that the gradient of the local solution satisfies $|\nabla v|\geq C'>0$ in the set
\[
W:=\bigg(v^{-1}\big((0,\eta_2)\big) \cap\bigcup_{i=1}^N\tilde V_i\bigg)\minus K
\]
and that the second-order derivatives of $v$ are bounded in $W$. (We recall each set $\tilde V_i$ was chosen so that the auxiliary function $\tv_i$ and its derivatives satisfied appropriate bounds in it.) We can safely assume that, for any connected component $W_0$ of $W$, there is a unique component of $v^{-1}(c)\minus K$ meeting $W_0$ for all values $c\in(0,\eta_2)$ and that this latter component of the level set does not intersect the boundary of $W_0$ but at the compact set $\pd K$.

The set $W$ should be thought of as a conveniently chosen half-neighborhood of the hypersurface $L_0$ minus a compact set. As the local solution $v$ satisfies suitable gradient and saturation conditions in $W$ by the above argument, now it essentially suffices to deal with $v$ in the compact set $K$. Indeed, by Hopf's boundary point lemma~\cite{GT98} there are positive constants $C_2$ and $\eta_3$ and a half-neighborhood $W'\subset\Om$ of the intersection $L_0\cap K$ where the gradient of the local solution is bounded as $|\nabla v|\geq C_2$. Moreover, by compactness it is obvious that the set $W'$ can be chosen so that for all values $c\in (0,\eta_3)$ there is a unique component of $v^{-1}(c)\cap K$ that meets the set $W'$, this component intersecting the boundary $\pd W'$ only on $\pd K$.

Putting together these results, Eq.~\eqref{gradv}, which ensures that the local solution $v$ satisfies the gradient and saturation conditions of Theorem~\ref{T.stab}, now follows by taking the constant $\eta:=\min\{\eta_2,\eta_3\}$ and choosing an appropriate subset $V\subset W\cup W'$. It also stems that the norm $\|v\|_{C^2(V)}$ is finite.

Before we can profitably apply the stability theorem to the local solution $v$, there is one last technical point we must take care of. The equation $(\De-k^2)v=0$ is only satisfied in the half-neighborhood $V$ of the hypersurface $L_0$, not in its closure. Therefore, in order to apply the theorem we will first prove that there is a level set of $v$ in $V$ diffeomorphic to $L_0$ via a $C^1$-small diffeomorphism (e.g., via a diffeomorphism $\Psi_0$ with $\|\Psi_0-\id\|_{C^1(\RR^n)}<\ep/2$) which only differs from the identity in a neighborhood of $\overline{V}$. This is easily shown by taking an open set $V'$ containing the closure of $V$ and a smooth extension $\bar v$ of the local solution $v$ to the set $(V'\cup\Om)\minus\ga$ which is equal to $v$ in $\Om\minus\ga$ and negative in ${V'\minus\BOm}$ (but does not necessarily satisfy the equation $(\De-k^2)\bar v=0$). One can then apply Corollary~\ref{C.Thom} with $(f,g,U,V,L)=(\bar v,\bar v-c,V'\cup(\Om\minus\bigcup_{i=1}^N\ga_i),V,L_0)$ and $p=1$ to deduce the result, where $c$ is a small enough constant in the interval $(0,\eta)$.

Applying the same reasoning to all the connected components of the hypersurface $L$, we derive that there exist a diffeomorphism $\Psi$ of $\RR^n$ with $\|\Psi-\id\|_{C^1(\RR^n)}<\ep/2$ and a function $w$, which satisfies the equation $(\De-k^2)w=0$ in the closure of a neighborhood $U$ of $\Psi(L)$, such that:
\begin{enumerate}
\item The transformed hypersurface $\Psi(L)$ is a level set $w^{-1}(c_0)$ of the function, for some positive $c_0$.
\item The neighborhood $U$ is saturated, that is, if the intersection of $w^{-1}(c)\cap U$ is nonempty for some $c\in\RR$, then $w^{-1}(c)$ does not intersect the boundary~$\pd U$.
\item The local solution satisfies the gradient condition $|\nabla w|\geq C'>0$ in $U$ and its second-order derivatives are bounded in this set.
\item The complement of the set $U$ in $\RR^n$ does not have any compact components.
\end{enumerate}

To complete the proof of the theorem, it suffices to approximate the local solution $w$ in the set $U$ by a global solution $u$ of the equation $(\De-k^2)u=0$. By the condition~(iv) above, one can invoke Theorem~\ref{T.approx} to do so, ensuring that the $C^2$ norm $\|u-w\|_{C^2(U)}$ is arbitrarily small. If this norm is chosen small enough, the construction of the local solution $w$ and the saturated set $U$ allows us to apply Theorem~\ref{T.stab} to each connected component of the level set $w^{-1}(c_0)$ to obtain $C^1$-small diffeomorphisms that are only different from the identity in a prescribed neighborhood of the component and transform each component of $w^{-1}(c_0)$ into components of a level set of the global solution $u$. As there is no loss of generality in assuming that the supports of these diffeomorphisms minus the identity are pairwise disjoint, we therefore obtain a diffeomorphism $\hPsi$ of $\RR^n$ with $\|\hPsi-\id\|_{C^1(\RR^n)}$ as small as we wish (say, smaller than $\ep/2$) transforming the level set $w^{-1}(c_0)$ into a union of components of a level set of $u$. The diffeomorphism $\Phi:=\hPsi\circ\Psi$ then transforms the hypersurface $L$ into a union of components of $u^{-1}(c_0)$ and is arbitrarily close to the identity in the sense that $\|\Phi-\id\|_{C^1(\RR^n)}<\ep$.
\end{proof}

\begin{remark}\label{R.solomonic}
  The method of proof remains valid if we do not demand the ends of the tentacled domains to be `straight' (i.e., isometric to $\La_i× (0,\infty)$) but 'of solomonic column type' (i.e., isometric to the intersection of $\La_i× (0,\infty)$ with a domain invariant under a free isometric $\ZZ$-action). 
\end{remark}

\section{Noncompact level sets: the case of infinite topological type}
\label{S.infinite}

In this section we will conclude the proof of Theorem~\ref{T.1} by considering the case of hypersurfaces that are not finitely generated. Although the basic philosophy of the proof is the same as in Theorem~\ref{T.finite}, in this case the hypersurfaces under consideration typically have an infinite number of ends (so, in particular, they are not diffeomorphic to an algebraic variety) that are not necessarily collared, and this introduces additional difficulties that require a separate treatment. The simplest example of a hypersurface of this kind is the torus of infinite genus in $\RR^3$ (cf.\ Figure~\ref{F.jungle}). This example shows that it is very convenient to embed the hypersurfaces so as to exploit discrete translational symmetries, so we will start by introducing some notation associated to these symmetry groups.

Let us fix a positive integer $\ell$ not greater than the space dimension $n$. We take a set of linearly independent vectors $\cA:=\{a_1,\dots,a_\ell\}\subset\RR^n$ and denote by $a_j^*$ their dual vectors, which are the only elements in the linear span of the vectors $\cA$ satisfying $ a_i\cdot a^*_j=\de_{ij}$. For each $t\in\ZZ^\ell$ we will then denote by $\tau_t^\cA:\RR^n\to\RR^n$ the map
\[
\tau_t^\cA(x):=x+t_1a_1+\cdots+t_\ell a_\ell\,,
\]
which defines a free isometric $\ZZ^\ell$-action. We will also consider the fundamental cell
\begin{equation*}
Q^\cA:=\big\{s_1a_1+\cdots+s_\ell a_\ell+b:0<s_i<1,\; b\in\RR^n\;\text{and } b\cdot a_i=0 \;\text{for  }1\leq i\leq\ell \big\}
\end{equation*}
associated to this action and the faces
\[
\Pi_j^\cA:=\big\{x\in\pd{Q^\cA}: x\cdot a_j^*=0\big\}\,,
\]
with $1\leq j\leq \ell$.

We will say a set $\cU$ of $\RR^n$ is {\em $\cA$-periodic} if it is invariant under the above $\ZZ^\ell$ action, i.e., if $\tau_t^\cA(\cU)=\cU$ for all $t\in\ZZ^\ell$. If a set $\cU$ is $\cA$-periodic, $\cU$ can be recovered from its intersection with the fundamental cell via the identity
\begin{equation}\label{Omt}
\cU=\bigcup_{t\in\ZZ^\ell}\tau^\cA_t(\cU\cap \overline{Q^\cA})\,.
\end{equation}
For simplicity, we shall sometimes say that a set is {\em $\ell$-periodic} if it is $\cA$-periodic for some set $\cA$ with $\ell$ independent vectors.

Basically, the motivation of this section is to prove that there are
global solutions to the equation $(\De-k^2)u=0$ having a level set
diffeomorphic to infinite connected sums of any nonsingular algebraic hypersurface. From the experience of Theorem~\ref{T.finite} one can guess that it will be useful to exploit this diffeomorphism to embed the infinite-type hypersurface (in this case, the aforementioned infinite sum) so that both the collared ends of the underlying algebraic hypersurface and the way the different individual hypersurfaces are glued together are `geometrically controlled' at infinity. We will do this through the following definition:

\begin{definition}\label{D.infinite}
  An  $\cA$-periodic  domain $\cU$ of $\RR^n$ with smooth boundary is
  {\em tentacled} if its intersection with the fundamental cell
  $Q^\cA$ is either relatively compact or equal to  a tentacled domain
  of finite type $\Om$ minus a compact subset $K$ of $\RR^n$. A {\em tentacled hypersurface} of $\RR^n$ of possibly infinite type is a hypersurface with a finite number of connected components, each of which is the boundary of a (possibly periodic) tentacled domain.
\end{definition}
\begin{remark}
  A tentacled hypersurface of infinite type does not need to be periodic, even if all its components are. Moreover, the periodic components can have distinct symmetry groups.
\end{remark}

It should be noted that if the intersection of the periodic tentacled domain $\cU$ with the fundamental cell $Q^\cA$ is unbounded, obviously the rank $\ell$ of the symmetry group is at most $n-1$.

The class of tentacled hypersurfaces of infinite type modulo diffeomorphism includes infinite connected sums of a large class of hypersurfaces, as we will see in the following examples:

\begin{example}
  If $L$ is a (possibly compact) nonsingular algebraic hypersurface of $\RR^n$, there is an $\ell$-periodic tentacled hypersurface that is diffeomorphic to a connected sum of infinitely many copies of $L$. Here the rank $\ell$ can take any value between $1$ and $n-1$ (resp.\ $n$) if $L$ is noncompact (resp.\ compact). In particular, and getting back to Question~\ref{Q.1}, the torus of infinite genus and the infinite jungle gym are examples of $1$-periodic and $3$-periodic tentacled surfaces of $\RR^3$, respectively.
\end{example}

\begin{example}
  Given any integer $\ell$ between $1$ and $n-1$ and a tentacled hypersurface $L$ in $\RR^n$, there is an $\ell$-periodic hypersurface that is diffeomorphic to a  connected sum of infinitely many copies of $L$. This readily follows from the following elementary proposition, which is a trivial consequence of the fact that any tentacled hypersurface is collared:

\begin{proposition}\label{P.plane}
  Given a tentacled hypersurface $L\subset\RR^n$ and a set $\cA\subset\RR^n$ of $\ell$ independent vectors ($\ell\leq n-1$), one can transform it by a diffeomorphism $\Psi$ of $\RR^n$ so that $\Psi(L)$ is tentacled and contained in the fundamental cell $Q^\cA$.
\end{proposition}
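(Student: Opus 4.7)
The plan is to construct $\Psi$ as a composition $\Psi=\Psi_3\circ\Psi_2\circ\Psi_1$ that successively (i) bends all the tentacles of $L$ so that they eventually travel in a direction orthogonal to $\myspan\cA$, (ii) contracts in the $\cA$-directions so that the bounded portion fits within the finite width of the fundamental cell, and (iii) translates the result into $Q^\cA$. The whole construction hinges on the collared structure of the tentacles, and each step is carried out by a diffeomorphism of $\RR^n$ whose support is explicitly described.

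First I would fix a unit vector $e\in\RR^n$ orthogonal to $\myspan\cA$, which exists since $\ell\leq n-1$. Since $L$ has only finitely many connected components and each component has only finitely many ends (all isometric to a product $\La_i\times(0,\infty)$ with direction $\nu_i$), the tentacles can be covered by pairwise disjoint open tubular neighborhoods $\cN_i\subset\RR^n$, each diffeomorphic to $\La_i\times(0,\infty)\times(-\vep,\vep)$ via the product of the tentacle parametrization with the normal exponential map. Inside each such $\cN_i$ one can explicitly write down a diffeomorphism that is the identity for $t\leq T$, bends the axial direction smoothly from $\nu_i$ to $e$ on the interval $t\in[T,T+1]$, and for $t\geq T+1$ is a rigid translation along $e$; since the transverse factor $\La_i\times(-\vep,\vep)$ is simply carried rigidly along, the image of the tentacle becomes straight again and parallel to $e$. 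Extending by the identity outside $\bigcup_i\cN_i$ yields a global smooth diffeomorphism $\Psi_1$ of $\RR^n$, and $\Psi_1(L)$ is again tentacled (with new caps obtained by rigidly re-orienting the $\La_i$) but with all tentacles eventually straight and parallel to $e$.

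Next I would apply $\Psi_2$, a linear contraction in the directions of $\cA$, with contraction factor so small that the bounded core of $\Psi_1(L)$, together with the (now uniformly re-oriented) caps $\La_i$ and their translates along $e$, all fit inside a slab of width strictly less than the $\cA$-extent of $Q^\cA$. Because $e\perp\myspan\cA$, the contraction preserves the direction of each tentacle and merely rescales its cap, so $\Psi_2\circ\Psi_1(L)$ remains tentacled. Finally, $\Psi_3$ is a translation by an element of $\myspan\cA$ chosen so that the image lies inside $Q^\cA$; unboundedness of $Q^\cA$ in the $e$-direction guarantees that the straightened tentacles extend to infinity without ever leaving the cell.

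The main (and only) point requiring care is the construction of the bending diffeomorphism $\Psi_1$ in Step~1: one must verify that the rotation-from-$\nu_i$-to-$e$ is realized by an ambient diffeomorphism of $\RR^n$ rather than merely by an abstract isotopy of the tentacle, and that the resulting map remains an orientation-preserving diffeomorphism after extension by the identity. Both issues are resolved once one notes that the bending can be performed as a time-$1$ flow of a smooth vector field compactly supported in each $\cN_i$ (a divergence-free rotational vector field tapered to zero on $\pd\cN_i$ and acting trivially in the transverse direction), and that the tubular neighborhoods of distinct tentacles can be chosen disjoint by properness of $L$. The preservation of the tentacled structure in Steps~2 and~3 is immediate.
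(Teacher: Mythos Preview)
Your overall three-step plan (bend the tentacles into a direction orthogonal to $\myspan\cA$, contract in the $\cA$-directions, translate) is correct and is essentially what the paper has in mind; indeed the paper dismisses this proposition in a single line as ``a trivial consequence of the fact that any tentacled hypersurface is collared'' and gives no further argument. Your write-up is therefore considerably more explicit than the paper's own treatment, and Steps~2 and~3 are fine as stated.

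There is, however, a genuine gap in the execution of Step~1. You describe $\cN_i$ as a thin tubular neighborhood of the $i$-th tentacle (``diffeomorphic to $\La_i\times(0,\infty)\times(-\vep,\vep)$ via the product of the tentacle parametrization with the normal exponential map''), and then claim the bending can be realized as the time-$1$ flow of a vector field \emph{compactly supported in $\cN_i$}. But any such flow is a diffeomorphism of $\RR^n$ that maps $\cN_i$ onto itself; in particular the image of the tentacle stays inside the straight tube $\cN_i$, which runs in the direction $\nu_i$. It is therefore impossible for the image to be a straight cylinder in a different direction $e$. The description you give of the map (identity for $t\le T$, rigid motion along $e$ for $t\ge T+1$) does not send $\cN_i$ to itself, so it cannot arise from a flow supported there, and extending it by the identity off $\cN_i$ does not give a globally injective map.

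The fix is standard but does require a slightly different setup: choose $T$ large, and for each $i$ choose a properly embedded arc $\gamma_i:[T,\infty)\to\RR^n$ that agrees with the axis $t\mapsto y_i+t\nu_i$ near $t=T$ and has $\gamma_i'(t)=e$ for large $t$, with the thickened arcs $\gamma_i([T,\infty))+\overline{\La_i}$ pairwise disjoint and disjoint from the core of $L$ (this is easy to arrange since there are finitely many tentacles, the core is compact, and $n\geq3$). The straight tentacle is then isotopic, through proper embeddings avoiding the rest of $L$, to the tube along $\gamma_i$; the isotopy extension theorem produces an ambient diffeomorphism of $\RR^n$ realizing the bend. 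Alternatively, enlarge each $\cN_i$ to an open set containing both the original and the target bent tentacle (still disjoint from the other $\cN_j$ and from the core) before invoking a compactly supported flow. Either way, you must also say a word about why the bent tentacles can be chosen not to collide; this is not automatic from your current construction.
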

\end{example}

As in the previous section (cf.\ Lemma~\ref{L.laOm}), firstly we need
to prove that the spectrum of the Dirichlet Laplacian in a periodic
tentacled domain is bounded away from $0$ in order to obtain
exponential decay of the Green's function. This is done in the
following lemma, which exploits both the asymptotic Euclidean
symmetries of tentacled domains of finite type and the invariance under the isometric $\ZZ^\ell$ action of periodic tentacled domains:

\begin{lemma}\label{L.periodic}
  Let $\cU$ be an $\cA$-periodic tentacled domain in $\RR^n$. Then the bottom of the spectrum of the Laplacian $\la_\cU$ in this domain is  positive.
\end{lemma}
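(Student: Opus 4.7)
The plan is to extend the argument of Lemma~\ref{L.laOm} to the periodic setting by means of a Floquet--Bloch decomposition. The free isometric $\ZZ^\ell$-action $\tau^\cA_t$ on $\cU$ yields a direct integral decomposition of the Dirichlet Laplacian,
\[
-\De_\cU \;\cong\; \int^\oplus_{[0,2\pi)^\ell} (-\De^\theta)\, d\theta,
\]
where each fibre operator $-\De^\theta$ acts on $\cU\cap Q^\cA$ with Dirichlet conditions on $\pd\cU\cap \overline{Q^\cA}$ and Bloch (quasi-periodic) boundary conditions of phase $\theta$ on the faces $\Pi^\cA_j$. Consequently $\la_\cU=\inf_{\theta}\la(-\De^\theta)$, and it suffices to obtain a positive lower bound uniform in $\theta$. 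The argument then splits into the two cases of Definition~\ref{D.infinite}.

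First, if $\cU\cap Q^\cA$ is relatively compact, each $-\De^\theta$ has compact resolvent and thus purely discrete positive spectrum. Its first eigenvalue $\la^\theta$ is strictly positive: a candidate ground state must vanish on the nonempty Dirichlet portion of the boundary $\pd\cU\cap \overline{Q^\cA}$ and hence cannot be constant, forcing $\int|\nabla\phi^\theta|^2>0$. Continuity of $\theta\mapsto\la^\theta$ on the compact torus $[0,2\pi)^\ell$ then yields $\la_\cU>0$.

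The other case, in which $\cU\cap Q^\cA=\Om\setminus K$ for a tentacled domain of finite type $\Om$ and a compact set $K$, is handled by combining the Floquet slicing with the reasoning of Lemma~\ref{L.laOm}. Observe first that the tentacles $S_i$ of $\Om$ must extend in directions transverse to $\mathrm{span}\,\cA$, since otherwise they could not fit in the slab $Q^\cA$; hence the Bloch conditions impose nothing on them. By Weyl's criterion, upon removing the relatively compact perturbation $K\cup(\Om\cap H_1^-\cap\cdots\cap H_J^-)$, the essential spectrum of $-\De^\theta$ coincides with that of the Dirichlet Laplacian on the disjoint union of the tentacles, which in turn is bounded below by $\min_i\la_{\La_i}>0$ by Lemma~\ref{L.laOm}. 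Since $0$ cannot be a Dirichlet--Bloch eigenvalue on a domain whose Dirichlet boundary is nonempty, we obtain $\la^\theta\geq c>0$ uniformly in $\theta$, and hence $\la_\cU>0$.

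The main obstacle I anticipate is establishing the continuous (or at least lower semi-continuous) dependence of $\la^\theta$ on $\theta$ rigorously in the non-compact-cell case, but the uniform essential-spectrum bound above makes this a matter of standard min--max perturbation theory rather than a substantive analytic issue.
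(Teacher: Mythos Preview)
Your proposal is correct and follows essentially the same route as the paper: Floquet--Bloch reduction to fibre operators $-\De^\theta$ on the fundamental cell, then for each $\theta$ showing $0$ is neither an eigenvalue (Dirichlet portion of the boundary is nonempty) nor in the essential spectrum (decoupling the bounded Bloch faces and invoking Lemma~\ref{L.laOm}), and finally using compactness of the torus for uniformity. The paper's version is slightly terser---it handles the essential spectrum by observing directly that $\si_{\rm ess}(\De^\theta)=\si_{\rm ess}(\De_{\cU\cap Q^\cA})$ since $\cU\cap\Pi^\cA_j$ is bounded, and it cites Sunada and Adachi for the spectral identity $\si(\De_\cU)=\bigcup_\theta\si(\De^\theta)$---but the substance is the same, including the final compactness step that you flag as the delicate point.
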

\begin{proof}
Let us begin by observing that the Dirichlet spectrum in $\cU$ can be written as
\begin{equation}\label{Detheta}
\si(\De_\cU)=\bigcup_{\theta\in\TT^\ell} \si(\De_\theta)\,,
\end{equation}
where $\si(\De_\theta)$ denotes the $L^2$ spectrum of the Laplacian in $\cU\cap Q^\cA$ with the boundary conditions
\begin{align}
&w=0\quad\text{on }\pd\cU\cap\overline{Q^\cA}\,,\label{BCtheta}\\
&D^\al w(x+a_j)=\e^{\I \theta_j} D^\al w(x)\quad\text{for all }x\in\cU\cap\Pi^\cA_j,\; |\al|\leq1,\; 1\leq j\leq \ell.\notag
\end{align}
Here $\TT^\ell:=(\RR/2\pi\ZZ)^\ell$ is the $\ell$-torus and we write $\theta=(\theta_1,\dots,\theta_\ell)$. As for Eq.~\eqref{Detheta}, the inclusion of $\si(\De_\cU)$ in the union of the spectra of $\De_\theta$ follows from a standard modification of Floquet theory~\cite{Su88}, while the fact that both sets actually coincide follows e.g.\ from Adachi's results on unitary actions of amenable groups~\cite{Ad95}.

From the boundary condition~\eqref{BCtheta} and the fact that the boundary $\pd\cU$ intersects the fundamental cell ${Q^\cA}$, it follows that $0$ cannot be an eigenvalue of $\De_\theta$ for any $\theta\in\TT^\ell$. Besides, $0$ cannot belong to the essential spectrum of $\De_\theta$ either, since the spectrum $\si(\De_{\cU\cap Q^\cA})$ is bounded away from $0$ and $\si_{\rm ess}(\De_\theta)$ coincides with the Dirichlet essential spectrum $\si_{\rm ess}(\De_{\cU\cap Q^\cA})$ by the boundedness of the intersection $\cU\cap\Pi^\cA_j$. The former assertion is clear when $\cU\cap Q^\cA$ is bounded and stems from Lemma~\ref{L.laOm} when $\cU\cap Q^\cA$ is a tentacled hypersurface minus a compact set. The statement now follows from the decomposition~\eqref{Detheta} and the compactness of $\TT^\ell$.
\end{proof}

We shall next prove the main result of this section where we adapt the
method of proof of Theorem~\ref{T.finite} to construct solutions of
the equation $(\De-k^2)u=0$ with a level set diffeomorphic to any
tentacled hypersurface of infinite type. To avoid unnecessary repetitions, we will not present in full detail some steps in the argument, referring instead to the appropriate parts of the demonstration of Theorem~\ref{T.finite}. Together with Theorem~\ref{T.finite}, this completes the proof of Theorem~\ref{T.1}.

\begin{theorem}\label{T.infinite}
  Let $k$ be a real constant. Given a (possibly disconnected and of infinite type) tentacled hypersurface $L\subset\RR^n$, we can transform it by a diffeomorphism $\Phi$ of $\RR^n$, arbitrarily close to the identity in the $C^1$ norm, so that $\Phi(L)$ is a union of connected components of a level set $u^{-1}(c_0)$ of a solution to the equation $(\De-k^2)u=0$ in~$\RR^n$.
\end{theorem}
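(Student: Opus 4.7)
The plan is to proceed component-by-component, handling each connected component of $L$ that is the boundary of a periodic tentacled domain in the same way as in Theorem~\ref{T.finite}, with the main new ingredient being the construction of a local solution via a periodic sum of Green's functions. Since Definition~\ref{D.infinite} guarantees that $L$ has only finitely many components, and those that bound finite-type tentacled domains are covered by Theorem~\ref{T.finite}, it is enough to treat a single component $L_0=\pd\cU$, where $\cU$ is $\cA$-periodic with symmetry group $\ZZ^\ell$.

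The periodic local solution is built as follows. By Definition~\ref{D.infinite}, either $\cU\cap Q^\cA$ is relatively compact (case~A), or else it is $\Om\minus K$ for some finite-type tentacled domain $\Om$ and compact $K$ (case~B). In case~B, insert half-lines $\ga_1,\dots,\ga_N$ in the tentacles of $\Om$ exactly as in the proof of Theorem~\ref{T.finite}; in both cases fix a point $p_0\in\cU\cap Q^\cA$. I set
\[
v(x):=\sum_{t\in\ZZ^\ell}G_\cU\bigl(x,\tau_t^\cA(p_0)\bigr)+\sum_{i=1}^N\sum_{t\in\ZZ^\ell}\int_{\tau_t^\cA(\ga_i)}G_\cU(x,y)\,d\mu_i^t(y),
\]
where $\mu_i^t$ is the length measure on $\tau_t^\cA(\ga_i)$ and the second double sum is absent in case~A. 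The spectral gap $\la_\cU>0$ supplied by Lemma~\ref{L.periodic} combined with the $C^2$ Green's function decay of Theorem~\ref{T.Jap} ensures that both series converge uniformly in $C^2$ on compact subsets of $\overline\cU$ away from the singular set $\Ga:=\bigcup_{t\in\ZZ^\ell}\bigl(\{\tau_t^\cA(p_0)\}\cup\bigcup_i\tau_t^\cA(\ga_i)\bigr)$. Thus $v$ is $\cA$-periodic, smooth on $\overline\cU\setminus\Ga$, strictly positive in $\cU$, vanishes on $L_0$, and solves $(\De-k^2)v=0$ off $\Ga$.

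The next step is to verify the saturation, gradient lower bound and $C^2$-bound conditions of Corollary~\ref{C.Thom} in a half-neighborhood of $L_0$; by $\cA$-periodicity it suffices to do this within one fundamental cell. Near the compact portion of $\pd\cU\cap\overline{Q^\cA}$, Hopf's boundary point lemma gives $|\nabla v|\geq c>0$ on a bounded half-neighborhood, since $v$ is positive in $\cU$ and $(\De-k^2)v\geq 0$; the contributions from translates $t\neq 0$ are uniformly small in $C^2$ by Theorem~\ref{T.Jap} (provided $p_0$ is placed far enough from $\pd\cU$) and cannot spoil this bound. In case~B, along each tentacle of $\cU\cap Q^\cA$ I compare $v$ to the straight-cylinder auxiliary function $\tv_i$ of Eq.~\eqref{tildevi}, repeating the argument leading to Eq.~\eqref{exp}: Lemmas~\ref{L.GS} and~\ref{L.GOmS} handle the finite-type part, and contributions from $t\neq 0$ are absorbed through the exponential $C^2$-decay of $G_\cU$ from Theorem~\ref{T.Jap}. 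This provides a saturated half-neighborhood $V_0$ of $L_0$ with $|\nabla v|\geq C>0$ and $\|v\|_{C^2(V_0)}<\infty$.

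With a local solution $v$ of this quality for every component of $L$, the conclusion is obtained as in the final paragraph of the proof of Theorem~\ref{T.finite}. Corollary~\ref{C.Thom} yields a $C^1$-small diffeomorphism $\Psi$ so that $\Psi(L)$ is a level set $w^{-1}(c_0)$ of a function $w$ satisfying $(\De-k^2)w=0$ in a saturated neighborhood $U$ of $\Psi(L)$; $U$ can be chosen thin enough that $\RR^n\setminus U$ has no bounded connected components, which is possible because all the components of $L$ are non-compact boundaries of unbounded domains. Theorem~\ref{T.approx} then produces a global solution $u$ with $\|u-w\|_{C^2(U)}$ arbitrarily small, and Theorem~\ref{T.stab}, applied component-by-component, furnishes another $C^1$-small diffeomorphism $\hPsi$ such that $\Phi:=\hPsi\circ\Psi$ has the required property. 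The main difficulty is the $C^2$-convergence and uniform control of the periodic series defining $v$ inside a fundamental cell: both rest essentially on the spectral gap of Lemma~\ref{L.periodic} and on the sharp second-order Green's function estimate of Theorem~\ref{T.Jap}, without which neither the gradient lower bound nor the $C^2$-boundedness needed to invoke Corollary~\ref{C.Thom} and Theorem~\ref{T.stab} would be available.
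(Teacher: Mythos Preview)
Your argument follows essentially the same route as the paper's: build an $\cA$-periodic local solution by summing $\ZZ^\ell$-translates of Green's-function contributions (point sources in the relatively compact case, half-line integrals in the tentacled case), use Lemma~\ref{L.periodic} together with Theorem~\ref{T.Jap} for $C^2$ convergence and decay, reduce the verification of the stability hypotheses to one fundamental cell via periodicity (Hopf on the compact portion, comparison with $\tv_i$ on the tentacles), and finish with Corollary~\ref{C.Thom}, Theorem~\ref{T.approx} and Theorem~\ref{T.stab} exactly as in Theorem~\ref{T.finite}. The only minor deviations are that you retain the point-source sum also in case~B (the paper drops it there, and it is unnecessary though harmless) and that the sign in your Hopf step should read $(\De-k^2)v\le 0$.
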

\begin{proof}
As in the proof of Theorem~\ref{T.finite}, our goal is to construct a local solution of the equation defined in a half-neighborhood of each component $L_0$ of the hypersurface $L$ and satisfying the saturation, gradient and $C^2$-boundedness conditions of the $C^1$ noncompact stability theorem. If the component $L_0$ is a tentacled hypersurface of finite type, this local solution was constructed in the proof of Theorem~\ref{T.finite}, so we will assume that $L_0$ is $\cA$-periodic for a set of $\ell$ linearly independent vectors $\cA$. We will denote by $\cU$ the $\cA$-periodic tentacled domain enclosed by $L_0$; the main geometric objects considered in this proof are presented in Figure~\ref{F.periodic}.

\begin{figure}[t]
  \centering
  {\psfrag{QA}{$Q^\cA$}
    \psfrag{ga1}{$\ga_1$} \psfrag{ga2}{$\ga_2$} \psfrag{ga3}{$\ga_3$}
    \psfrag{V}{$V$} \psfrag{cU}{$\cU$} \psfrag{dots}{\dots}
  \includegraphics[scale=0.2,angle=0]{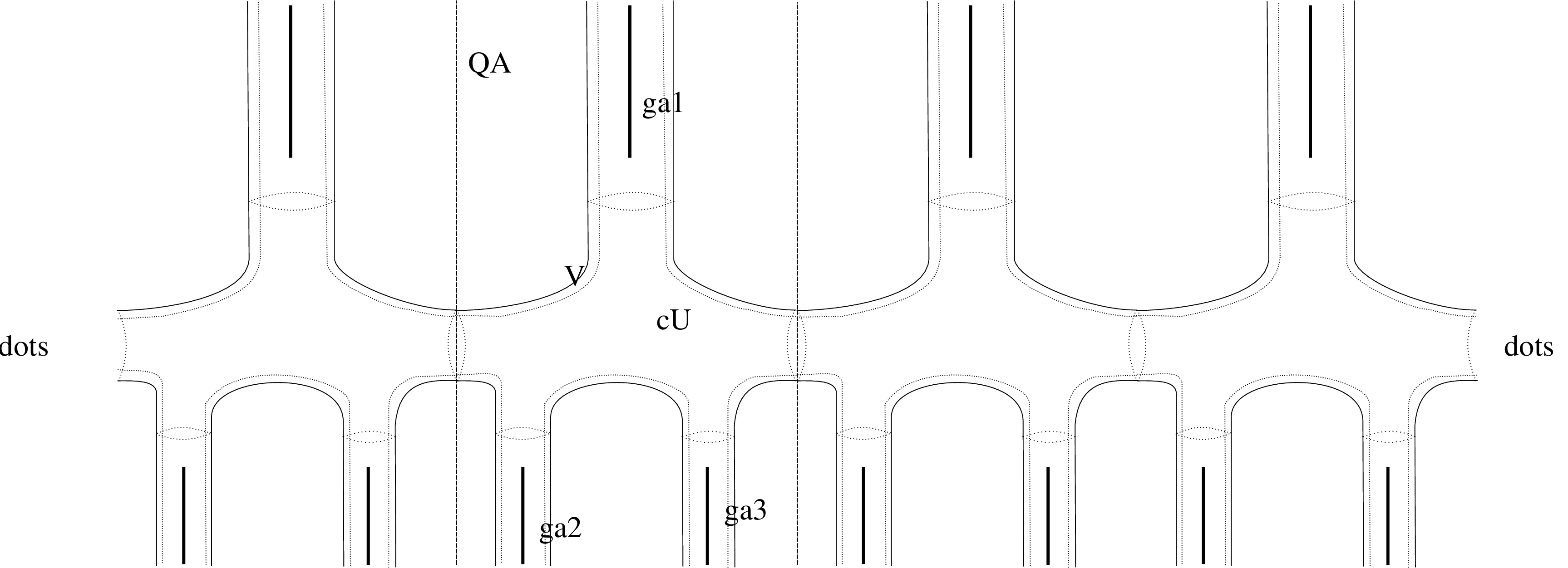}}
\caption{The half-lines $\ga_i$, the fundamental cell $Q^\cA$ and the neighborhood $V$ that appear in the proof of Theorem~\ref{T.infinite}.} 
\label{F.periodic}
\end{figure}

By the definition of an $\cA$-periodic tentacled domain, the intersection $\cU\cap Q^\cA$ of this domain with the fundamental cell is either relatively compact or equal to $\Om\minus K$, with $\Om$ a tentacled domain with $N$ ends and $K$ a compact set. Let us first suppose that $\cU\cap Q^\cA=\Om\minus K$. In this case, we can safely assume that the tentacles $S_i$ of the domain $\Om$ do not intersect the compact set $K$, and define the half-lines $\ga_i\subset S_i$ as we did in Eq.~\eqref{gai}. Let us set
\[
v_i(x):=\int_{\ga_i} G_\cU(x,y)\,d\mu_i(y)\,,
\]
with $\mu_i$ being the length measure on the half-line $\ga_i$. By Lemma~\ref{L.periodic} and the exponential bound for the Green's function proved in Theorem~\ref{T.Jap}, one immediately obtains the estimate
\[
v_i(x)\leq\begin{cases}
    C_1\dist(x,\ga_i)^{3-n}\e^{-C_2\dist(x,\ga_i)}&\text{if }n\geq4,\\
    C_1(1+|\log\dist(x,\ga_i)|)\,\e^{-C_2\dist(x,\ga_i)/2} &\text{if }n=3
  \end{cases}
\]
for the function $v_i$ after arguing as in Eq.~\eqref{vi}.

The symmetry of the domain $\cU$ implies that its Green's function is invariant under this symmetry group: $G_\cU(x,y)=G_\cU(\tau_t^\cA(x),\tau_t^\cA(y))$. Therefore, the above estimate implies that the function
\begin{equation*}
v(x):=\sum_{i=1}^N\sum_{t\in\ZZ^\ell} v_i\big(\tau^\cA_t(x)\big)
\end{equation*}
is well defined, for the above sum converges uniformly on compact subsets of $\overline{\cU}\minus\ga$, with
\[
\ga:=\bigcup_{i=1}^N\bigcup_{t\in\ZZ^\ell}\tau^\cA_t(\ga_i)
\]
being the union of all suitably translated copies of the half-lines $\ga_i$. 
The function $v$, which is of class $C^\infty(\overline{\cU}\minus\ga)\cap L^1\loc(\cU)$, therefore satisfies the equation
\[
(k^2-\De)v=\sum_{i=1}^N\sum_{t\in\ZZ^\ell} (\tau^\cA_t)_*\,\mu_i\,,
\]
in $\cU$ and the boundary condition $v|_{L_0}=0$; indeed, we shall next show that $v$ is the local solution we wanted to construct. In the above equation, $(\tau^\cA_t)_*$ denotes the push-forward. From its definition it is clear that $v$ inherits the symmetries of the domain, meaning that
\begin{equation}\label{vtau}
v(\tau^\cA_t(x))=v(x)\,.
\end{equation}

The next step of the proof consists in showing that the local solution $v$ satisfies the hypotheses of the $C^1$ noncompact stability theorem, that is, there are a half-neighborhood $V\subset\cU$ of the component $L_0$ with $\|v\|_{C^2(V)}<\infty$ and some positive constant $\eta$ such that $\inf_{V}|\nabla v|>0$
and any component of $v^{-1}(c)$ connected with $V$ is contained in $V$ for all $0<c<\eta$. By the symmetry of $v$ (cf.\ Eq.~\eqref{vtau}), it suffices to prove the claim in $\cU\cap Q^\cA$. Since $\cU\cap Q^\cA$ equals $\Om\minus K$, a close look at the proof of Theorem~\ref{T.finite} shows that the proof we gave for tentacled hypersurfaces carries over almost verbatim to the present situation. Indeed, the difference between $D^\al v(x)$ and the function $D^\al \tv_i(x)$ introduced in the relation~\eqref{tildevi} can be computed as in Eq.~\eqref{roll}, the only changes being that one must use the Green's function $G_\cU(x,y)$ instead of $G_\Om(x,y)$ and
the sum over the set of integers $\{j:1\leq j\leq N \text{ and } j\neq i\}$ is to be replaced by a sum over
\[
\big\{ (j,t):1\leq j\leq N,\; t\in\ZZ^\ell\; \text{and}\; j\neq i\text{ if }t=0\big\}\,.
\]
As Lemmas~\ref{L.GS} and~\ref{L.GOmS} and their proofs obviously remain valid with the domain $\Om$ replaced by $\cU$, on account of Lemma~\ref{L.periodic} the rest of the argument remains unchanged and the above claim follows. This allows us to apply Corollary~\ref{C.Thom} as in Theorem~\ref{T.finite} to show that the level set $v^{-1}(c_0)$ is diffeomorphic to the component $L_0$ for any sufficiently small positive constant $c_0$.

Let us now treat the case when the intersection $\cU\cap Q^\cA$ is relatively compact. Then we take a point $y_0$ in this intersection and let
\[
v_0(x):=G_\cU(x,y_0)
\]
be the Green's function of the domain $\cU$ with pole at $y_0$. 
By Lemma~\ref{L.periodic} and Theorem~\ref{T.Jap} one has the exponential bound
\[
v_0(x)\leq C_1|x-y_0|^{2-n}\e^{-C_2|x-y_0|}\,,
\]
which allows us to show that the function
\[
v(x):=\sum_{t\in\ZZ^\ell} v_0\big(\tau^\cA_t(x)\big)
\]
is a well defined solution of the equation
\[
(k^2-\De)v=\sum_{t\in\ZZ^\ell} \de_{\tau^\cA_t(y_0)}
\]
with the symmetry property~\eqref{vtau}. Therefore to show that the function $v$ is the local solution we are looking for it is enough to prove the gradient bound $|\nabla v|\geq C>0$ and the boundedness of the second-order derivatives of $v$ in a saturated neighborhood of the set $L_0\cap Q^\cA$, which is straightforward by Hopf's boundary point lemma and the compactness of $L_0\cap \overline{Q^\cA}$. The rest of the argument goes as in the previous  case, where $\cU\cap Q^\cA=\Om\minus K$.

Hence, applying the same reasoning to all the connected components of the hypersurface $L$ we can use the same argument as in Theorem~\ref{T.finite} to infer that there is a diffeomorphism $\Psi$ of $\RR^n$ with $\|\Psi-\id\|_{C^1(\RR^n)}<\ep/2$, a neighborhood $U$ of $\Psi(L)$ and a local solution $w$ of the equation $(\De-k^2)w=0$ in the closure of $U$ which satisfy the properties (i)--(iv) in the proof of Theorem~\ref{T.finite}. The theorem then follows from the Approximation and Noncompact Stability Theorems~\ref{T.approx} and~\ref{T.stab} as in the proof of Theorem~\ref{T.finite}.
\end{proof}

\begin{remark}\label{R.monsters}
  It is clear that the result and the method of proof remain valid also if $L$ is allowed to have an infinite number of connected components provided we impose appropriate `uniform' assumptions. For example, it suffices to impose that the components of $L$ do not accumulate and that each component be isometric to an element of a fixed, finite collection $\{L_1,\dots, L_r\}$ of tentacled hypersurfaces (possibly disconnected and of infinite type). Likewise, the method of proof also works when the domain $\cU$ is not necessarily periodic but there is a periodic tentacled domain $\hat\cU$ and a compact subset $K$ of $\RR^n$ such that $\cU\minus K=\hat\cU\minus K$.
\end{remark}

\section{Joint level sets}
\label{S.compact}

In this section we shall study joint level sets of solutions to elliptic equations, which amounts to considering transverse intersections of level sets. In doing so, we will find it convenient to consider separately the cases where the components of the joint level set are all compact or not. This difference in treatment is due to the fact that when all the components are compact one can exploit a fairly general better-than-uniform approximation result to deal with a wide class of equations, while when there are some noncompact components the situation is considerably more subtle and our approach relies on our results for single level sets. (It is worth analyzing why the treatment of a single compact level set, for equations that admit these kind of level sets, is considerably less involved than the case of noncompact level sets; for this, we refer to Appendix~\ref{S.appendix}, where this problem is considered in detail.)

\subsection{The compact case}
\label{SS.1}

Here we shall consider the compact joint level sets $u_1^{-1}(c_1)\cap\cdots \cap u_m^{-1}(c_m)$ of solutions to  equations
\begin{equation*}
T_ru_r=0
\end{equation*}
in $\RR^n$. Here each $T_r$ is a linear elliptic differential operator of second order with real analytic coefficients and the number $m$ of functions is at least $2$. As we shall see, the reason why we can consider more general equations in this section is that we can prove a better-than-uniform approximation theorem  for locally finite unions of disjoint compact subsets of $\RR^n$. This theorem provides fine control at infinity, which can be combined with the Cauchy--Kowalewski theorem in each component to derive the desired realization results. This approach does not work when some of the components are noncompact because the approximation result given in Theorem~\ref{T.approx} is not fine enough to deal with the domains of definition of the local Cauchy--Kowalewski solutions, which can be very narrow at infinity.

For the sake of completeness, we start with the following standard lemma, which allows us to approximate a smooth submanifold by an analytic one (we will later on apply this result in the case of codimension $1$ in the preparations to apply the Cauchy--Kowalewski theorem):

\begin{lemma}\label{L.embed}
  Let $L$ be a locally finite union of pairwise disjoint compact codimension-$m$ submanifolds of $\RR^n$  with trivial normal bundle. Then one can transform $L$ by a diffeomorphism $\Psi$ of $\RR^n$, arbitrarily close to the identity in any $C^p$ norm, so that $\Psi(L)$ is an analytic submanifold.
\end{lemma}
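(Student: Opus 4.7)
The plan is to work separately in pairwise disjoint neighborhoods of each compact component of $L$, exploit the triviality of the normal bundle to realize each component as a regular joint zero set of $m$ smooth functions, approximate these defining functions by polynomials, and finally patch via Thom's isotopy theorem in the compact setting.

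First, enumerate the components $L_1,L_2,\dots$ of $L$ (at most countably many, by local finiteness). Choose pairwise disjoint open neighborhoods $U_i\supset L_i$ with pairwise disjoint compact closures. Since the normal bundle of each $L_i$ in $\RR^n$ is trivial, the tubular neighborhood theorem furnishes a smooth diffeomorphism identifying a smaller tubular neighborhood $U_i'\subset U_i$ with $L_i\times B^m$. Via this identification, $L_i$ is the regular joint zero set of $m$ smooth functions $f_i^{(1)},\dots,f_i^{(m)}$, and after multiplication by a cutoff one obtains $F_i^{(1)},\dots,F_i^{(m)}\in C^\infty(\RR^n)$ compactly supported in $U_i$, with $L_i$ a connected component of $\{F_i^{(1)}=\dots=F_i^{(m)}=0\}$ and the gradients $\nabla F_i^{(1)},\dots,\nabla F_i^{(m)}$ linearly independent in a neighborhood of $L_i$.

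Next, for any prescribed $\delta_i>0$, Weierstrass' polynomial approximation theorem applied on a compact set containing $\overline{U_i'}$ yields polynomials $p_i^{(1)},\dots,p_i^{(m)}$ with
\[
\big\|F_i^{(r)}-p_i^{(r)}\big\|_{C^{p+1}(\overline{U_i'})}<\delta_i\qquad(r=1,\dots,m).
\]
Because $L_i$ is compact and the gradient condition of Corollary~\ref{C.Thom} holds on a saturated neighborhood of $L_i$, the $C^p$ part of that corollary (equivalently, the compact Thom isotopy theorem, cf.\ Remark~\ref{R.Thom}) supplies, for $\delta_i$ small enough, a diffeomorphism $\Phi_i$ of $\RR^n$ supported in an arbitrarily small neighborhood of $L_i$ inside $U_i$, with $\|\Phi_i-\id\|_{C^p(\RR^n)}$ as small as we wish, and such that $\Phi_i(L_i)$ is a connected component of the algebraic set $\{p_i^{(1)}=\dots=p_i^{(m)}=0\}$. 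In particular $\Phi_i(L_i)$ is a real analytic submanifold of $\RR^n$.

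Finally, because the supports of $\Phi_i-\id$ lie in the pairwise disjoint open sets $U_i$, the map $\Psi$ that equals $\Phi_i$ on $U_i$ for every $i$ and the identity elsewhere is a well-defined smooth diffeomorphism of $\RR^n$. Local finiteness of $\{L_i\}$ means that on each compact set of $\RR^n$ only finitely many $\Phi_i$ differ from the identity, so choosing the $\delta_i$ to decrease sufficiently fast ensures $\|\Psi-\id\|_{C^p(\RR^n)}$ is as small as desired, while $\Psi(L)=\bigcup_i\Phi_i(L_i)$ is analytic. The only real subtlety — the persistence of a diffeomorphic copy of $L_i$ under perturbation of the defining functions — is resolved by the trivial normal bundle hypothesis (which produces a regular joint zero set with the required transversality) combined with the compact Thom stability; given these, the rest of the argument assembles standard ingredients.
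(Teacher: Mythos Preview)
Your argument is correct and follows essentially the same route as the paper's proof: write each compact component as a regular joint zero set using the trivial normal bundle, approximate the defining map by an analytic one, invoke the compact case of the stability theorem (Remark~\ref{R.Thom}) to get a $C^p$-small diffeomorphism supported near that component, and glue. The only real difference is the choice of approximation: the paper applies Whitney's analytic approximation theorem directly to the trivializing map $\Theta_b:W_b\to\RR^m$ to obtain an analytic submersion $\widehat\Theta_b$, whereas you pass through cutoffs and use $C^{p+1}$ polynomial approximation on a compact set. Your route yields the minor bonus that each $\Phi_i(L_i)$ sits inside an algebraic variety, at the cost of the extra cutoff step; the paper's use of Whitney is slightly more direct. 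One small imprecision: $\Phi_i(L_i)$ is the portion of the polynomial zero set lying in your chosen neighborhood, not necessarily a global connected component of the algebraic set, but this is irrelevant since analyticity of $\Phi_i(L_i)$ is all that is needed.
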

\begin{proof}
Let $L_b$ denote the connected components of the submanifold $L$, with
$b$ ranging over an at most countable set $B$. Since $L_b$ has trivial
normal bundle, there is a tubular neighborhood $W_b$ of $L_b$ and a
$C^\infty$ trivializing map $\Theta_b:W_b\to\RR^m$ with
$\Theta_b^{-1}(0)=L_b$. We can safely assume that the closures of the
sets $W_b$ are pairwise disjoint. For any $\de_b>0$, Whitney's
approximation theorem~\cite[Theorem 1.6.5]{Na68} enables us to take a
real analytic submersion $\hTe_b:W_b\to\RR^m$ with
$\|\hTe_b-\Theta_b\|_{C^p(W_b)}$. The hypotheses of
Theorem~\ref{T.stab} being automatically satisfied by the compactness
of the component $L_b$ and the fact that the map $\Theta_b$ is a
trivialization, we can now apply Theorem~\ref{T.stab} and
Remark~\ref{R.Thom} with $(f,g,U,V,L)$ equal to
$(\Theta_b,\hTe_b,W_b,W_b,L_b)$ to derive the existence of
diffeomorphisms $\Psi_b$ of $\RR^n$ such that:
\begin{enumerate}
\item $\hTe_b^{-1}(0)=\Psi_b(L_b)$.
\item $\|\Psi_b-\id\|_{C^p(\RR^n)}<\ep$ and the support of $\Psi_b-\id$ is contained in $W_b$.
\end{enumerate}
Hence these diffeomorphisms naturally define the desired diffeomorphism $\Psi$ by
\[
\Psi(x):=\begin{cases}
  \Psi_{b}(x) &\text{if }x\in W_b\,,\\
  x &\text{if }x\not\in\bigcup_{b\in B}W_b\,.
\end{cases}
\]
\end{proof}

Instead of Theorem~\ref{T.approx}, in this section we will use the
following better-than-uniform approximation result, which is modeled
upon a theorem of Bagby and Gauthier~\cite{BG88}. This result, which
is valid for general analytic elliptic operators but requires an
essential compactness assumption, is proved through an iterative
argument using the Lax--Malgrange theorem (which accounts for the
conditions we impose on the set $S$) and a suitably chosen exhaustion
of $\RR^n$:

\begin{lemma}\label{L.LM}
  Suppose that $S\subset\RR^n$ is a locally finite union of compact sets of nonempty interior whose complements do not have any relatively compact components. If $w$ is a local solution of the equation $T_rw=0$ in the set $S$, then it can be approximated in $S$ by a global solution of this equation in the $C^p$ better-than-uniform sense. That is, for any $p$ and any positive continuous function $\ep(x)$ there is a function $v$ satisfying the equation $T_rv=0$ in $\RR^n$ such that, pointwise in $S$,
  \[
\max_{|\al|\leq p}\big|D^\al v(x)-D^\al w(x)\big|<\ep(x)\,.
\]
\end{lemma}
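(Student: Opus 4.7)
The plan is to adapt the Bagby--Gauthier strategy, constructing $v$ as the $C^p$-limit of a sequence $v_j$ of global solutions produced by iterated applications of the Lax--Malgrange theorem on a carefully chosen compact exhaustion of $\RR^n$.

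First the setup. Fix an exhaustion $K_1\subset\Int K_2\subset K_2\subset\cdots$ of $\RR^n$ by closed balls with $\bigcup_j K_j=\RR^n$, and set $S_j:=K_j\cap S$. The crucial geometric observation is that the compact set $S_j$ still has the Runge property that $\RR^n\smallsetminus S_j$ has no bounded components: indeed, any bounded component would have to lie in the interior of $K_j$ and be disjoint from $S$, hence be contained in a component of $\RR^n\smallsetminus S$, which is unbounded by hypothesis. The same argument shows that composite compacta of the form $K_{j-2}\cup(S_j\smallsetminus \Int K_{j-1})$ also have complements without bounded components. Define tolerances $\ep_j:=\inf_{x\in S_j}\ep(x)$, which are strictly positive by compactness of $S_j$ and positivity of $\ep$.

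Next, the iteration. Starting from $v_0:=0$, I inductively produce global smooth solutions $v_j$ of $T_rv_j=0$ in $\RR^n$ satisfying
\[
\|v_j-w\|_{C^p(S_j)}<\tfrac12\ep_j,\qquad \|v_j-v_{j-1}\|_{C^p(K_{j-2})}<\de_j,
\]
for a summable sequence $\de_j\to 0$ to be adjusted. The inductive step exploits that $K_{j-2}$ and $S_j\smallsetminus \Int K_{j-1}$ are disjoint compact sets (since $K_{j-2}\subset\Int K_{j-1}$). Hence the function $F_j$ that equals $0$ on a neighborhood of $K_{j-2}$ and equals $w-v_{j-1}$ on a neighborhood of $S_j\smallsetminus \Int K_{j-1}$ is a well-defined local solution of $T_rF_j=0$. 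Applying the Lax--Malgrange theorem on the compact set $K_{j-2}\cup(S_j\smallsetminus \Int K_{j-1})$ yields a global solution $h_j$ with $\|F_j-h_j\|_{C^p}$ as small as desired on this set; setting $v_j:=v_{j-1}+h_j$ and using that $\|w-v_{j-1}\|_{C^p(S_{j-1})}$ is already small, both inductive bounds follow after elementary bookkeeping.

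Finally, convergence and pointwise control. The summability of the $\de_j$ makes $(v_j)$ Cauchy in $C^p$ on every fixed compact subset of $\RR^n$, so $v:=\lim v_j$ exists in $C^p$ and satisfies $T_rv=0$ (by passing to the limit in the equation and invoking elliptic regularity). For $x\in S$, pick the least $j_0$ with $x\in S_{j_0}$; then the triangle inequality bounds $|D^\al v(x)-D^\al w(x)|$ for $|\al|\leq p$ by $\|v_{j_0}-w\|_{C^p(S_{j_0})}+\sum_{k>j_0}\|v_k-v_{k-1}\|_{C^p(K_{k-2})}$, which is at most $\tfrac12\ep_{j_0}+\sum_{k>j_0}\de_k$, and this can be made strictly less than $\ep_{j_0}\leq\ep(x)$ provided the $\de_k$ are chosen small enough as a function of the previously fixed tolerances.

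The main obstacle is the quantitative bookkeeping: at stage $j$ one must choose the Lax--Malgrange tolerance for $h_j$ not as a globally small constant but so that it respects both the future bound $\tfrac12\ep_{j+1}$ on $S_{j+1}$ and the summability needed for the tails to be controlled pointwise by $\ep(x)$ rather than merely uniformly. The separation of the old data on $K_{j-2}$ from the new data on $S_j\smallsetminus\Int K_{j-1}$ via the intermediate shell $K_{j-1}\smallsetminus K_{j-2}$ is what makes the Cauchy--Kowalewski-free construction of the local solution $F_j$ possible, and is the place where the hypothesis that $S$'s complement has no relatively compact components is essential.
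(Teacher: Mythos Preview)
Your overall plan---iterated Lax--Malgrange on a compact exhaustion, with increments that vanish on an inner core and correct the approximation on an outer shell---is exactly the paper's strategy. However, your bookkeeping has a genuine gap, not merely a cosmetic one.

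Take $x\in S$ with $x\in K_{j_0}\setminus K_{j_0-1}$. Your final estimate needs control of every increment $h_k=v_k-v_{k-1}$ at $x$. For $k\geq j_0+2$ you have $x\in K_{k-2}$ and your bound $\|h_k\|_{C^p(K_{k-2})}<\de_k$ applies. But for $k=j_0+1$ the point $x$ lies in the shell $K_{j_0}\setminus K_{j_0-1}$, which is contained in neither $K_{j_0-1}$ nor $S_{j_0+1}\setminus\mathrm{Int}\,K_{j_0}$---the two sets on which you controlled $h_{j_0+1}$ via Lax--Malgrange. So $h_{j_0+1}(x)$ is completely uncontrolled, and the tail estimate breaks. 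The same issue spoils the inductive hypothesis $\|v_j-w\|_{C^p(S_j)}<\tfrac12\ep_j$: on $S\cap(\mathrm{Int}\,K_{j-1}\setminus K_{j-2})$ you have no information about $h_j$, hence none about $v_j-w$.

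The fix, which is precisely what the paper does, uses the hypothesis you never invoked: $S$ is a \emph{locally finite} union of compact pieces $S_b$. One chooses the exhaustion so that whenever a component $S_b$ meets $K_j$ it is entirely contained in $\mathrm{Int}\,K_{j+1}$. Then the ``corrector'' $w_k$ can be defined component-by-component (equal to $w-\sum_{j\leq k}v_j$ on those $S_b$ meeting the new shell, and zero elsewhere), which is a genuine local solution because each $S_b$ is treated as a whole. This eliminates the uncontrolled shell: every component of $S$ is either entirely in the region where the old approximation is good, or entirely in the region being freshly corrected. With arbitrary balls cutting the components of $S$, as in your setup, no consistent piecewise definition of $F_j$ covers the intermediate shell, and the argument cannot close.
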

\begin{proof}
To control the $C^p$ norm of the difference $v-w$ as required by the
function $\ep(x)$, we will introduce some positive constants $\ep_j$
associated to this function and to an exhaustion of $\RR^n$ that we
shall define next. For this, let us denote by $S_b$ the connected
components of the set $S$, where $b$ ranges over an at most countable
set $B$. As $S$ is the locally finite union of $S_b$, we can take an
exhaustion $\emptyset=:K_0 \subset K_1\subset K_2\subset\cdots$  of
$\RR^n$ by compact sets such that:
\begin{enumerate}
\item The union of the interiors $\stackrel{\circ}K_j$ of the sets $K_j$ is the whole $\RR^n$.
\item For each $j$, the complements of $K_j$ and of $S\cup K_j$ are connected.
  \item If some $S_b$ intersects $K_j$, then $S_b$ is contained in the interior of $K_{j+1}$.
\end{enumerate}
Now we can take any positive numbers $\ep_j$ such that
  \begin{equation}\label{epn}
\ep_j<\frac16\min_{x\in K_{j+1}}\ep(x)\quad\text{and}\quad \sum_{k=j+1}^\infty\ep_k<\ep_j
\end{equation}
for all $j\geq1$. We also set $\ep_0:=0$.

We now proceed by induction. We make the induction hypothesis that there are functions $v_j$ satisfying $T_rv_j=0$ in $\RR^n$ and such that, for any $s\geq1$,
\begin{subequations}\label{induction}
  \begin{align}
    \bigg\| w-\sum_{j=1}^sv_j\bigg\|_{C^p(S\cap (K_{s+1}\minus K_s))}&<\ep_s\,,\label{ind1}\\
  \bigg\| w-\sum_{j=1}^sv_j\bigg\|_{C^p(S\cap (K_{s}\minus K_{s-1}))}&<\ep_s+2\ep_{s-1}\,,\label{ind2}\\
    \big\|  v_s\big\|_{C^p(K_{s-1})}&<\ep_s+\ep_{s-1}\,.\label{ind3}
  \end{align}
\end{subequations}

Let us begin by proving the induction hypotheses for $s=1$. As the
complement of $S\cap K_2$ does not have any relatively compact
components, the Lax--Malgrange theorem~\cite[Theorem 3.10.7]{Na68}
yields a function $v_1$ which satisfies the equation $T_rv_1=0$ in $\RR^n$ and such that
\[
\big\|w-v_1\big\|_{C^p(S\cap K_2)}<\ep_1\,.
\]
By the definition of the set $K_0$ and of the constant $\ep_0$, it is therefore evident that the induction hypotheses hold in this case.

Let us now assume that the induction hypotheses hold for all $1\leq s\leq k$ and use this assumption to prove them for $s=k+1$. For this purpose, let us define a function $w_k$ on $S\cup K_k$ by setting $w_k|_{K_k}:=0$ and
\[
w_k|_{S_b}:=\begin{cases}
  w-\sum\limits_{j=1}^kv_j &\text{if }S_b \text{ intersects }K_{k+2}\minus\stackrel{\circ}K_{k+1},\\
  0 &\text{if }S_b \text{ does not  intersect }K_{k+2}\minus\stackrel{\circ}K_{k+1}.
\end{cases}
\]
The definition of the exhaustion and the hypothesis~\eqref{ind1} guarantee that $T_rw_k=0$ and
\begin{equation}\label{fm}
  \big\| w_k\big\|_{C^p(K_k\cup(S\cap K_{k+1}))} \leq  \bigg\| w-\sum_{j=1}^kv_j\bigg\|_{C^p(S\cap( K_{k+1}\minus K_k))}<\ep_k\,.
\end{equation}
Since the complement of the set $K_{k+2}\cap (S\cup K_k)$ does not
have any bounded components by the conditions we imposed on the exhaustion, a further application of the Lax--Malgrange theorem allows us to take a solution $v_{k+1}$ of $T_rv_{k+1}=0$ in $\RR^n$ such that
\begin{equation}\label{gm1}
   \big\|w_k-v_{k+1}\big\|_{C^p(K_{k+2}\cap (S\cup K_k))}<\ep_{k+1}\,.
\end{equation}

Eq.~\eqref{gm1} and the definition of $w_k$ ensure that the hypothesis~\eqref{ind1} also holds for $s=k+1$. Moreover, Eqs.~\eqref{ind1}, \eqref{fm} and~\eqref{gm1} imply the following pointwise $C^p$ bound in the set $S\cap (K_{k+1}\minus K_k)$, valid for all $|\al|\leq p$:
\begin{align*}
  \bigg|D^\al\bigg( w-\sum_{j=1}^{k+1}v_j\bigg)\bigg|&\leq \bigg|D^\al\bigg( w-\sum_{j=1}^{k}v_j\bigg)\bigg| + \big|D^\al v_{k+1}\big|\\
  &<\ep_k+ \big| D^\al(w_k-v_{k+1})\big|+ \big|D^\al w_k\big|<\ep_{k+1}+2\ep_k\,,
\end{align*}
This proves the second induction hypothesis~\eqref{ind2} for $s=k+1$. Moreover,
\[
\big\| v_{k+1}\big\|_{C^p(K_k)}\leq \big\|w_k-v_{k+1}\big\|_{C^p(K_k)}+ \big\| w_k\big\|_{C^p(K_k)}<\ep_{k+1}+\ep_k
\]
as a consequence of Eqs.~\eqref{fm} and~\eqref{gm1}, so the remaining induction hypothesis~\eqref{ind3} also holds for $s=k+1$. The induction argument is then complete.

Let us now define the global solution $v$ as
\[
v:=\sum_{j=1}^\infty v_j\,,
\]
the sum converging $C^p$-uniformly by the definition of constants $\ep_j$ and the induction hypothesis~\eqref{ind3}. Since each $v_j$ satisfies $T_rv_j=0$, this $C^p$ convergence ensures that the function $v$ satisfies the equation $T_rv=0$ too. Besides, from the conditions~\eqref{epn} we imposed on the constants $\ep_j$ and the induction hypotheses~\eqref{induction} it follows that in each set $S\cap (K_{k+1}\minus K_k)$ one has the pointwise $C^p$ estimate
\begin{align*}
  \big|D^\al(w-v)\big|  &\leq \bigg|D^\al\bigg(w-\sum_{j=1}^{k+1}v_j\bigg)\bigg|+\big|D^\al v_{k+2}\big| +  \bigg|D^\al\bigg(\sum_{j=k+3}^\infty v_j\bigg)\bigg|\\
  &<(\ep_{k+1}+2\ep_k) +(\ep_{k+2}+\ep_{k+1}) +\sum_{j=k+3}^\infty(\ep_j+\ep_{j-1})\\
  &<2\ep_k+4\ep_{k+1}<\min_{x\in K_{k+1}}\ep(x)
\end{align*}
for any $k$ and all $|\al|\leq p$, as we wanted to show.
\end{proof}

We now have all the ingredients needed for the proof of Theorem~\ref{T.3}:
%
\begin{proof}[Proof of Theorem~\ref{T.3}]
  Our goal is to obtain the local solution by means of a Cauchy problem. For this, it is convenient to observe that Lemma~\ref{L.embed} ensures that, by perturbing the submanifold a little if necessary, there is no loss of generality in assuming that $L$ is a real analytic submanifold of $\RR^n$.

  Let us denote by $L_b$ the connected components of $L$, with $b$
  taking values in an at most countable set $B$. We start by realizing
  $L_b$ as the intersection of $m$ hypersurfaces $\Si_{br}$. As each
  component $L_b$ also has trivial normal bundle, we can take an
  analytic trivialization $\Theta_b:W_b\to\RR^m$, where $W_b$ is a
  tubular neighborhood of $L_b$ and $\Theta_b^{-1}(0)=L_b$. We denote
  the components of $\Theta_b$ by $(\theta_{b1},\dots,\theta_{bm})$ and consider the analytic hypersurfaces $\Si_{br}:=\theta_{br}^{-1}(0)\subset W_b$, with $1\leq r\leq m$. By the definition of $\Theta_b$, it is apparent that these hypersurfaces intersect transversally at $L_b=\Si_{b1}\cap\cdots\cap \Si_{bm}$.

Now that we have expressed the component $L_b$ as the intersection of $m$ real analytic hypersurfaces $\Si_{br}$, we can consider the following Cauchy problems:
\begin{equation*}
T_rv_{br}=0\,,\qquad v_{br}|_{\Si_{br}}=c_r\,,\qquad \pd_\nu v_{br}|_{\Si_{br}}=1\,.
\end{equation*}
Here $\pd_\nu$ denotes a normal derivative at the hypersurface
$\Si_{br}$. The differential operator $T_r$ being analytic and
elliptic, the Cauchy--Kowalewski theorem grants the existence of a
solution $v_r$ to this Cauchy problem in the closure of a neighborhood
$U_{br}$ of the hypersurface $\Si_{br}$. As the hypersurfaces $\Si_{br}$ intersect transversally at $L_b$ and the gradient of the solution $v_{br}$ coincides with the unit normal of $\Si_{br}$ on this hypersurface, it stems that 
\[
C_b:=\inf_{x\in L_b}\min_{|\om|=1}\big|\om_1\,\nabla v_{b1}(x)+\cdots+ \om_m\,\nabla v_{bm}(x)\big|
\]
is positive. By the compactness of $L_b$, the sets $U_{br}$ can be chosen small enough so that
\begin{equation}\label{Cb}
\inf_{x\in U_b}\min_{|\om|=1}\big|\om_1\,\nabla v_{b1}(x)+\cdots+ \om_m\,\nabla v_{bm}(x)\big|>\frac{C_b}2\,,
\end{equation}
where the set $U_b:=U_{b1}\cap\cdots\cap U_{bm}$ can be assumed to be saturated by the functions $v_{b1},\dots, v_{bm}$ without loss of generality. We can also suppose that the sets $U_b$ are pairwise disjoint.

Let us set $U:=\bigcup_{b\in B}U_b$ and define the local solutions $v_r$ of the equation $T_rv_r=0$ on the set $U$ as $v_r|_{U_{b}}:=v_{br}$. By Lemma~\ref{L.LM}, this local solution can be approximated in the $C^p$ better-than-uniform sense by global solutions $u_r$ of the equations $T_ru_r=0$. That is, for any given positive continuous function $\ep(x)$ in $\overline U$ (to be specified later) we can assume that
\[
\max_{|\al|\leq p}\big|D^\al u_r(x)-D^\al v_r(x)\big|<\ep(x)
\]
in the closure of the set $U$. In view of Eq.~\eqref{Cb} and the compactness of $\overline{U_b}$, Theorem~\ref{T.stab} and Remark~\ref{R.Thom} guarantee the existence of a constant $\ep_b>0$ such that if $\ep(x)<\ep_b$ in $U_b$ there is a diffeomorphism $\Phi_b$ of $\RR^n$, arbitrarily close to the identity in the $C^p$ norm and equal to the identity outside $U_b$, which transforms the submanifold $L_b$ into a connected component of the joint level set $u_1^{-1}(c_1)\cap\cdots \cap u_m^{-1}(c_m)$. As the supports of each map $\Phi_b-\id$ are pairwise disjoint and we can choose $\ep(x)$ so that it is smaller than $\ep_b$ in each set $U_b$, the theorem then follows by letting the diffeomorphism $\Phi$ be equal to $\Phi_b$ in each set $U_b$ and equal to the identity in the complement of $U=\bigcup_{b\in B} U_b$.
\end{proof}

\begin{example}
It is known that any exotic $m$-sphere smoothly embeds in $\RR^{2m}$
with trivial normal bundle~\cite{HLS65} and that any locally finite
link in $\RR^3$ has trivial normal bundle too, so Theorem~\ref{T.3} obviously furnishes a positive answer to Questions~\ref{Q.2} and~\ref{Q.3}. In particular, there are two harmonic functions $u_1,u_2$ in $\RR^3$ such that $u_1^{-1}(0)\cap u_2^{-1}(0)$ contains a knot in each isotopy class.
\end{example}

\begin{remark}
When $L$ is a finite union of compact codimension-$m$ components and $T_r=\De$ for all $r$, one can actually prove that $u_1,\dots, u_m$ can be chosen to be harmonic polynomials by proceeding as above and using an approximation theorem of Paramonov~\cite{Pa94} instead of the Lax--Malgrange theorem.
\end{remark}

\subsection{The noncompact case}
\label{SS.2}

In this subsection we present a realization theorem for noncompact
joint level sets $u_1^{-1}(c_1)\cap\cdots \cap u_m^{-1}(c_m)$ of solutions $u_r$ of the equations $(\De-k_r^2)u_r=0$, for any real constants $k_r$. It is assumed that the number of functions $m$ is at least $2$. As in the case of a single level set, it will be crucial to control the geometry of the submanifolds at infinity using asymptotic translation symmetries. In order to implement this idea, we will resort to the codimension-$m$ analog of a tentacled hypersurface, which can be defined in terms of tentacled hypersurfaces in a very simple manner:

\begin{definition}\label{D.joint}
  A (periodic) {\em codimension-$m$ tentacled submanifold} is the
  transverse intersection of $m$ tentacled hypersurfaces  (possibly
  disconnected and of infinite type).
\end{definition}

Before going on, let us briefly discuss why this is the suitable
codimension-$m$ analog of a tentacled hypersurface. From
Definition~\ref{D.tentacled}, it is clear that the properties one
would require for a submanifold $L$ ( for concreteness, connected and of finite type) to be a codimension-$m$ analog of a tentacled hypersurface are the following:
\begin{enumerate}
\item $L$ must have trivial normal bundle, as this is a topological obstruction to being a transverse intersection of level sets.
\item There must be $J$ embedded images $\Pi_j$ of $\RR^{n-1}$ in $\RR^n$, which divide $\RR^n$ into two domains $H_j^+$ and $H_j^-$ satisfying certain properties analogous to the conditions~(i)--(iv) in Definition~\ref{D.tentacled}. In particular, each component of the intersection of $L$ with $\Pi_1\cup \cdots\cup \Pi_J$ is compact, has dimension $n-m-1$ and is contained in an affine $(n-m)$-plane. If its components are called $\La_i$, the submanifold $L$ has a finite number of ends and are all isometric to some product $\La_i×(0,\infty)$.
\end{enumerate}
That the codimension-$m$ submanifold $L$ is then the transverse intersection of tentacled hypersurfaces $L^1,\dots, L^m$ readily follows from the fact that, as a consequence of these properties, $L$ has a tubular neighborhood whose boundary is a tentacled hypersurface. A very similar argument is also valid in the periodic, disconnected case.

\begin{example}
  In particular, from Example~\ref{E.algebraic} it follows that the
  transverse intersection of $m$ nonsingular algebraic open hypersurfaces is diffeomorphic to a codimension-$m$ tentacled submanifold. For instance, the torus of genus $g$ with $N$ ends can be realized as a $2$-dimensional tentacled submanifold in any $\RR^n$.
\end{example}

We shall next present the proof of Theorem~\ref{T.2}, which asserts that if $L$ is a tentacled submanifold of codimension $m$ and $\tL$ is a finite union of compact submanifolds of the same codimension with trivial normal bundle, then there are $m$ solutions $u_r$ to the equations $(\De-k_r^2)u_r=0$ in $\RR^n$ having a joint level set diffeomorphic to $L\cup\tL$ (provided, of course, that $L$ and $\tL$ are disjoint). It is apparent that the `compact' part of the argument can be easily tackled arguing as in the proof of Theorem~\ref{T.3}, so most of the proof will be devoted to the case of tentacled submanifolds:

\begin{proof}[Proof of Theorem~\ref{T.2}]
  Let $L_0$ be a connected component of the codimension-$m$
  submanifold $L\cup\tL$. We want to prove that there are $m$ local
  solutions $v_r$ of the equations $(\De-k_r^2)v_r=0$, defined in a
  suitable domain whose closure contains $L_0$, which have a joint
  level set diffeomorphic to $L_0$ and satisfy the hypotheses of the
  $C^1$ Stability Theorem~\ref{T.stab}. When $L_0$ is compact, this
  immediately follows from Theorem~\ref{T.3}, so we will henceforth
  assume that $L_0$ is noncompact. By hypothesis, $L_0$ is then given
  by the transverse intersection of $m$ (connected but possibly
  periodic) tentacled hypersurfaces $L^1,\dots,L^m$.
  
  Let us concentrate for the moment in the tentacled hypersurface
  $L^r$. As we showed in the proof of Theorem~\ref{T.finite}
  (or~\ref{T.infinite}, if $L^r$ is periodic), there is a
  local solution $v_r$, defined in a half-neighborhood
  $\overline{V_r}$ of $L^r$ and satisfying the conditions (i)--(iv) in the proof of Theorem~\ref{T.finite}, namely:
  \begin{enumerate}
    \item There is some positive and arbitrarily small constant whose corresponding level set  $v_r^{-1}(c_r)$ is diffeomorphic to the hypersurface $L^r$. This diffeomorphism can be chosen supported in a small neighborhood of $\overline{V_r}$ and close to the identity in the $C^1$ norm.
    \item The neighborhood $V_r$ is saturated by the function $v_r$.
    \item The gradient of the function $v_r$ satisfies the lower bound $|\nabla v_r|\geq C>0$ in the set $V_r$ and on $ L_0$ can be written as $\nabla v_r=-|\nabla v_r|\,\nu_r$, where $\nu_r$ is the outer normal at the hypersurface. Besides, the $C^2$ norm $\|v_r\|_{C^2(V_r)}$ is finite.
    \item The complement of the set $V_r$ does not have any bounded components.
\end{enumerate}

We claim that the joint level set $v_1^{-1}(c_1)\cap \cdots\cap v_m^{-1}(c_m)$ is diffeomorphic to the component $L_0$ via a $C^1$-small diffeomorphism. To prove this, we start by noticing that the set $V:=V_1\cap\cdots \cap V_m$ is saturated by the map $(v_1,\dots, v_m)$, which moreover has bounded second-order derivatives in $V$ by the condition~(iii) above. To show that the gradient condition of the stability theorem holds, we aim to prove that for any unit vector $\om=(\om_1,\dots,\om_m)$ the function
\[
F_\om(x):=\big|\om_1\nabla v_1(x)+\cdots +\om_m\nabla v_m(x)\big|^2
\]
satisfies the bound
\[
    F_\om(x)\geq C_1
\]
in the set $V$  for some positive constant $C_1$ independent of $\om$. As the hypersurfaces $L^1,\dots, L^m$ intersect transversally at the component $L_0$ and their geometry is controlled at infinity because they are tentacled, an easy geometric argument shows that the quantity
\[
C_2:=\inf_{x\in L_0}\min_{|\om|=1}\big|\om_1\,\nu_1(x)+\cdots+ \om_m\,\nu_m(x)\big|^2
\]
is positive. As $\nabla v_r=-|\nabla v_r|\,\nu_r$ on $L_0$ with $|\nabla v_r(x)|\geq C$ by the condition~(iii) above, this immediately implies that, for any $x\in L_0$,
\[
  F_\om(x)=\big|\om_1|\nabla v_1(x)|\,\nu_1(x)+\cdots+ \om_m|\nabla v_m(x)|\,\nu_m(x)\big|^2\geq C^2C_1
\]
is bounded from below by some positive constant independent of $\om$. As the $C^2$ norm of the functions $v_r$ is bounded and their gradients satisfy $|\nabla v_r|\geq C$ in $V$, it is straightforward to check  that, by taking the saturated set $V$ smaller if necessary, we can assume that $\inf_{|\om|=1}\inf_{x\in V}F_\om(x)$ is also positive.

As a consequence of the above digression, we can apply
Corollary~\ref{C.Thom} with
$(f,g,L)=\big((v_r)_{r=1}^m,(v_r-c_r)_{r=1}^m,L_0)$ (the domain $V$ in
Corollary~\ref{C.Thom} coincides with the current set $V$, while $U$
is a suitably chosen neighborhood of $\overline V$). This way we infer that, for any sufficiently small positive constants $c_r$, the joint level set $v_1^{-1}(c_1)\cap\cdots \cap v_m^{-1}(c_m)$ is diffeomorphic to the component $L_0$ via a $C^1$-small diffeomorphism supported in a neighborhood of $\overline V$, as we claimed.

Applying the same argument to each connected component of the
submanifold $L$ we obtain local solutions $w_r$ of the equations
$(\De-k_r^2)w_r=0$ that have a joint level set diffeomorphic to $L$
via a $C^1$-small diffeomorphism of $\RR^n$ and satisfy the hypotheses of the $C^1$ stability theorem. The number of connected components of $L$ being finite, this allows us to apply the Approximation Theorem~\ref{T.approx} and the Stability Theorem~\ref{T.stab} to complete the proof of the theorem (the details follow just as in the proof of Theorem~\ref{T.finite} and are thus omitted).
\end{proof}

\begin{remark}
  Obviously, the analogs of Remarks~\ref{R.solomonic} and~\ref{R.monsters} also apply to the case of joint level sets. 
\end{remark}

\appendix

\section{The case of a compact level set}
\label{S.appendix}

For the sake of completeness, in this Appendix we shall consider the
construction of solutions to the equation $(\De-q)u=0$ in $\RR^n$
having a level set diffeomorphic to a union of compact hypersurfaces,
provided the function $q$ satisfies a mild positivity assumption. Of
course, the result cannot hold true for harmonic functions as a
consequence of the maximum principle. 

A compact hypersurface separates $\RR^n$ into an inner (bounded) domain and an outer (unbounded) domain. In the following theorem we shall assume that the inner domains associated to each component of the hypersurface are pairwise disjoint. The reason for this is that a key element of the proof is the better-than-uniform approximation result established in Lemma~\ref{L.LM}, which makes use of this hypothesis. Upon comparison with the proof of Theorem~\ref{T.1}, this illustrates how the proof drastically simplifies in the case of compact components, as many of the subtleties that appeared in the former case are now absent.

\begin{theorem}\label{T.appendix}
  Let $L$ be a locally finite union of compact hypersurfaces of
  $\RR^n$ and let $q$ be a nonnegative, real analytic function on $\RR^n$ that is not identically zero. Suppose that the inner domains corresponding to the connected components of $L$ are pairwise disjoint and fix a positive constant $c$. Then one can transform the hypersurface $L$ by a diffeomorphism $\Phi$ of $\RR^n$, arbitrarily close to the identity in any $C^p$ norm, so that $\Phi(L)$ is a union of connected components of the level set $u^{-1}(c)$ of a function that satisfies the equation $(\De-q)u=0$ in $\RR^n$.
\end{theorem}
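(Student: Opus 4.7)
The plan is to follow the same local-to-global philosophy that underpins the paper, but exploiting the significant simplifications permitted by the compactness of every component of $L$. First I would apply the codimension-$1$ case of the argument behind Lemma~\ref{L.embed} to deform $L$ by a diffeomorphism arbitrarily $C^p$-close to the identity so that every connected component becomes a real analytic hypersurface; since the conclusion of the theorem allows for such a $C^p$-small diffeomorphism, this reduction is for free and will enable the use of analyticity at the boundary later on.

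For each component $L_b$ with bounded inner domain $\Om_b$, I would construct a local solution by solving the Dirichlet problem
\begin{equation*}
(\De-q)v_b=0\ \text{in }\Om_b,\qquad v_b|_{L_b}=c,
\end{equation*}
which is uniquely solvable because $q\geq 0$. The strong maximum principle (using $q\not\equiv 0$) forces $0\leq v_b<c$ on $\Om_b$, and Hopf's boundary point lemma yields $\pd_\nu v_b>0$ on $L_b$ for the outer unit normal $\nu$. Since $L_b$ is analytic and the coefficients of $\De-q$ are analytic, Morrey's analyticity-up-to-the-boundary theorem shows that $v_b$ extends as an analytic solution of $(\De-q)v_b=0$ to an open neighborhood $U_b$ of $\overline{\Om_b}$. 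Shrinking $U_b$ if needed and using the compactness of $L_b$, I can arrange that $|\nabla v_b|\geq C_b>0$ throughout $U_b$ and that $U_b$ is saturated by $v_b$ in the sense required by Theorem~\ref{T.stab}. Using the pairwise disjointness of the inner domains together with the local finiteness of $L$, the closed sets $S_b:=\overline{\Om_b\cup U_b}$ can be chosen pairwise disjoint.

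Setting $S:=\bigcup_b S_b$, the crucial observation is that $\RR^n\minus S$ has no relatively compact components: precisely because the $\Om_b$ are pairwise disjoint and are entirely contained in $S_b$, the complement reduces to the single unbounded outer region minus thin collars, which remains connected and unbounded. Lemma~\ref{L.LM} then produces a global solution $u$ of $(\De-q)u=0$ in $\RR^n$ with $\|u-v\|_{C^p(S)}$ as small as desired, where $v|_{S_b}:=v_b$. Finally, since each $L_b$ is compact and $|\nabla v_b|\geq C_b$ in the saturated neighborhood $U_b$, Theorem~\ref{T.stab} together with Remark~\ref{R.Thom}, applied to $(f,g)=(v_b-c,u-c)$ on $U_b$, yields a $C^p$-small diffeomorphism $\Phi_b$ supported in a neighborhood of $\overline{U_b}$ carrying $L_b$ to a component of $u^{-1}(c)$; the pairwise disjointness of the supports of $\Phi_b-\id$ allows me to paste them into a single diffeomorphism $\Phi$ close to the identity in $C^p$. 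The main technical obstacle is that Lemma~\ref{L.LM} forces the local solution to be defined on a set whose complement has no bounded components, which would fail on a thin tubular neighborhood of $L$; absorbing the entire inner domain $\Om_b$ into $S_b$ resolves this, and it is precisely the analyticity of $L_b$ and of the coefficients of $\De-q$, via Morrey's theorem, that permits the Dirichlet solution to merge smoothly with an exterior extension into a single local solution on $S_b$.
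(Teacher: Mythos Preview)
Your proof is correct and follows essentially the same route as the paper's: analytic perturbation via Lemma~\ref{L.embed}, Dirichlet local solution in each $\Om_b$ extended across $L_b$ by Morrey's theorem, approximation via Lemma~\ref{L.LM} on a set containing the full inner domains, and stability via Theorem~\ref{T.stab} with Remark~\ref{R.Thom}. One small imprecision worth fixing: since $L$ may have infinitely many components and the stability thresholds $\de_b$ can tend to zero, you must use the \emph{better-than-uniform} conclusion of Lemma~\ref{L.LM} (choosing a continuous $\ep(x)$ with $\ep(x)<\de_b$ on each $S_b$) rather than a single uniform bound $\|u-v\|_{C^p(S)}<\de$, exactly as the paper does.
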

\begin{proof}
  Let us denote the connected components of $L$ by $L_b$, the index $b$ ranging over an at most countable set $B$, and let $\Om_b$ stand for the inner domain of the component $L_b$. We saw in Lemma~\ref{L.embed} that, perturbing $L$ a little if necessary via a $C^p$-small diffeomorphism, there is no loss of generality in assuming that all the components $L_b$ are real analytic.

  A natural way of constructing a local solution of the equation having a level set diffeomorphic to $L_b$ is via a boundary value problem in the domain $\Om_b$.  It is standard that the boundary value problem
  \[
(\De-q)v_b=0\quad \text{in }\Om_b\,,\qquad v_b|_{L_b}=c
  \]
  has a unique solution, which is given by
  \[
v_b(x):=c-c\int_{\Om_b} G_{\Om_b}(x,y)\,q(y)\,dy\,.
  \]
  Here $G_{\Om_b}(x,y)$ is the Green's function of the operator
  $\De-q$ in the domain $\Om_b$, defined as in Proposition~\ref{P.Green}. The function $v_b$ satisfies the equation $(\De-q)v_b=0$ in the closure of $\Om_b$ by the fact that the function $q$ and the domain $\Om_b$ are analytic~\cite{Mo58}, is smaller than $c$ in $\Om_b$ by the maximum principle, and its gradient $\nabla v_b$ is nonzero on $L_b$ by Hopf's boundary point lemma~\cite{GT98}. Hence we can now define a function $v$ on the closed set $S:=\bigcup_{i\in I}\overline{\Om_b}$ that satisfies $(\De-q)v=0$ by setting $v|_{\overline\Om_b}:=v_b$. Obviously $v^{-1}(c)=L$.

  We shall next prove that one can use Lemma~\ref{L.LM} on better-than-uniform approximation to find a global solution $u$ of the equation such that $L$ is diffeomorphic to a union of connected components of $u^{-1}(c)$. This makes use of Remark~\ref{R.Thom} on the topological stability theorem, from which it stems that, given any component $L_b$ and any positive constant $\ep_0$, there is a positive constant $\de_b$ such that for any function $u$ with $\|u-v\|_{C^p(\Om_b)}<\de_b$ one can find a diffeomorphism $\Phi_b$ of $\RR^n$ with $\|\Phi_b-\id\|_{C^p(\RR^n)}<\ep_0$ mapping $L_b$ onto a connected component of $u^{-1}(c)$. It should be noticed that the saturation and gradient conditions of the stability theorem are obviously satisfied as a consequence of the gradient estimate $\nabla v_b\neq0$ on $L_b$ and the compactness of each component. One can also assume that $\Phi_b-\id$ is supported in a neighborhood $U_b$ of $\BOm_b$, with the sets $U_b$ pairwise disjoint (cf.\ Theorem~\ref{T.stab}).

  To complete the proof of the theorem, let us take a positive
  continuous function~$\ep(x)$ which is smaller than $\de_b$ in each
  set $\BOm_b$. By Lemma~\ref{L.LM} there is a function $u$ satisfying
  the equation $(\De-q)u=0$ in $\RR^n$ and the following $C^p$
  better-than-uniform bound in the set $S$:
  \[
\max_{|\al|\leq p}\big|D^\al u(x)-D^\al v(x)\big|<\ep(x)\,.
\]
By construction, $\Phi(L)$ is then a union of connected components of $u^{-1}(c)$, where the diffeomorphism $\Phi$ is defined as $\Phi(x):=\Phi_b(x)$ if $x$ belongs to some set $U_b$ and $\Phi(x):=x$ otherwise. 
\end{proof}
\begin{remark}
It is clear that the same proof works for a wider class of positive
second-order operators, but we will not pursue this issue here.  The
case of the equation $(\De+k^2)u=0$, considered in Question~\ref{Q.2}
in the context of joint level sets, can also be easily dealt with. To construct the local solution having a zero set diffeomorphic to $L$, it suffices to consider the first Dirichlet eigenfunction $v_b$ of each domain $\Om_b$, which satisfies $(\De+\la_b)v_b=0$ for some constant $\la_b>0$, and to apply suitable dilations and rigid motions to the functions $v_b$. With this starting point, one can apply the ideas of the proof of Theorem~\ref{T.appendix} to show the existence of a solution of the equation $(\De+k^2)u=0$ in $\RR^n$ such that $\Phi(L)$ is a union of connected components of $u^{-1}(0)$, but it should be noticed that in this case the diffeomorphism $\Phi$ is not granted to be close to the identity.
\end{remark}
\begin{remark}
Theorem~\ref{T.appendix} can be readily combined with
Theorem~\ref{T.1}  to deal with compact and noncompact components for
the equation $(\De-k^2)u=0$ at the same time, when $k\neq0$.
\end{remark}

To illustrate Theorem~\ref{T.appendix}, let us consider the following easy application:

\begin{example}
  There is a function satisfying $\De u=u$ in $\RR^3$ whose zero set
  contains all the compact surfaces, that is,  has a component of
  genus $g$ for all nonnegative integers $g$ (cf.\ Figure~\ref{F.dim2}). This is an obvious consequence of Theorem~\ref{T.appendix}, as there is a countable number of compact surfaces modulo diffeomorphism (indeed, they are customary labeled by their genus).
\end{example}

\begin{figure}[t]
  \centering
  \psfrag{d}{\dots}
  \includegraphics[scale=0.62,angle=0]{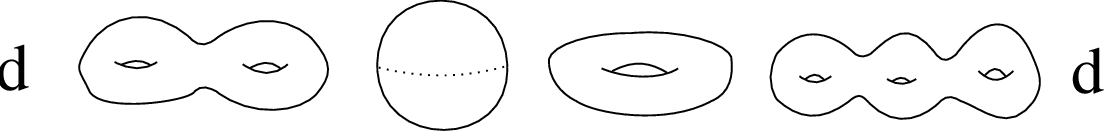}
\caption{A solution of the equation $\De u=u$ in $\RR^3$ whose zero set contains a compact surface of each genus.} 
\label{F.dim2}
\end{figure}

\section*{Acknowledgments}

The authors are indebted to  Yoichi Miyazaki for pointing out
Reference~\cite{Ta97} and for explaining to them how to deal with
estimates for higher derivatives of the Green's function, and to Paul M.\ Gauthier and Juan J.L.\ Velázquez for valuable comments. The authors are also grateful to Lawrence Conlon for his permission to reproduce his infinite jungle gym in our Figure 1. This work is supported in part by the MICINN under grants no.\ FIS2011-22566 (A.E.) and MTM2010-21186-C02-01 (D.P.-S.) and by Banco Santander--UCM under grant no.\ GR58/08-910556 (A.E.). The authors acknowledge the Spanish Ministry of Science and Innovation for financial support through the Ramón y Cajal program.

\end{document}